\documentclass[a4paper]{amsart}
\usepackage{graphicx, stmaryrd} % Required for inserting images
\usepackage{enumerate}  
\usepackage{hyperref}   % www references
\usepackage[square,numbers]{natbib}     % bibliography
\usepackage{amsthm}     % ams theorem environments
\usepackage{amsmath}    % ams math environments
\usepackage{amssymb}    % ams symbols
\usepackage{amsxtra}    % ams extra symbols
\usepackage{eucal}      % eu calligraphic fonts
\usepackage[bbgreekl]{mathbbol}   % black board symbols
\usepackage{todonotes}
\usepackage{comment}
\usepackage{tikz}

\usetikzlibrary{matrix}
\usetikzlibrary{patterns, shapes}
\usetikzlibrary{arrows}
\usetikzlibrary{calc,3d}
\usetikzlibrary{decorations,decorations.pathmorphing, decorations.pathreplacing}
\usetikzlibrary{through}
\tikzset{every picture/.style={baseline=-.65ex}}
\tikzset{ext/.style={circle, draw,inner sep=1pt},int/.style={circle,draw,fill,inner sep=1pt},nil/.style={inner sep=1pt}}
\tikzset{exte/.style={circle, draw,inner sep=3pt},inte/.style={circle,draw,fill,inner sep=3pt}}
\tikzset{diagram/.style={matrix of math nodes, row sep=3em, column sep=2.5em, text height=1.5ex, text depth=0.25ex}}
\tikzset{diagram2/.style={matrix of math nodes, row sep=0.5em, column sep=0.5em, text height=1.5ex, text depth=0.25ex}}
\tikzset{every loop/.style={draw}}

\newcommand{\tadpole}{
\begin{tikzpicture}[baseline=-.65ex]
\node[int] (v) at (0,0) {};
\draw (v) edge[loop] (v);
\end{tikzpicture}
}

\usepackage{tikz-cd}

\theoremstyle{plain}

\newtheorem{introthm}{Theorem}

\newtheorem{thm}{Theorem}[section]

\newtheorem{prop}[thm]{Proposition}
\newtheorem{proposition}[thm]{Proposition}
\newtheorem{lemma}[thm]{Lemma}
\newtheorem{cor}[thm]{Corollary}
\theoremstyle{definition}
\newtheorem{rem}[thm]{Remark}
\newtheorem{definition}[thm]{Definition}
\newtheorem{ex}[thm]{Example}
\newcommand{\G}{\mathsf{G}}
\newcommand{\St}{\mathrm{St}}
\newcommand{\Q}{\mathbb{Q}}

\newcommand{\C}{\mathbb{C}}
\newcommand{\CP}{\mathbb{CP}}
\newcommand{\T}{\mathcal T}
\newcommand{\fc}{\mathfrak c}
\newcommand{\GonL}{\mathcal L^G}
\newcommand{\fG}{\mathsf{fG}}
\newcommand{\fGc}{\mathsf{fcG}}
\newcommand{\fGC}{\mathsf{fGC}}
\newcommand{\fGCc}{\mathsf{fcGC}}
\newcommand{\GC}{\mathsf{GC}}
\newcommand{\mF}{\mathcal{F}}
\DeclareMathOperator{\gr}{gr}
\DeclareMathOperator{\vdim}{dim}

\DeclareMathOperator{\ad}{ad}

\newcommand{\GL}{\mathrm{GL}}
\newcommand{\SL}{\mathrm{SL}}
\newcommand{\Z}{\mathbb{Z}}
\newcommand{\fg}{\mathfrak{g}}
\newcommand{\fh}{\mathfrak{h}}

\newcommand{\shB}{\mathcal B}
\newcommand{\shA}{\mathcal{H}}
\newcommand{\RR}{R}
\newcommand{\Sh}{\mathrm{Sh}}

\DeclareMathOperator{\vspan}{span}
\newcommand{\FreeLie}{\mathrm{Free}_{\mathrm{Lie}}}
\newcommand{\Proj}{\mathrm{Proj}}

\newcommand{\sat}{{\mathrm{sat}}}
\newcommand{\id}{{\mathrm{id}}}
\renewcommand{\subset}{\subseteq}
\newcommand{\grt}{\mathfrak{grt}}
\usepackage{todonotes}
\DeclareMathOperator{\coker}{coker}
\DeclareMathOperator{\im}{im}
\newcommand{\rk}{\operatorname{rk}}

\author{Peter Patzt}
\address{University of Oklahoma \\
601 Elm Avenue \\
Norman, OK 73019, USA}
\email{ppatzt@ou.edu}

\author{Thomas Willwacher}
\address{Department of Mathematics\\ ETH Zurich\\ R\"amistrasse 101 \\ 8092 Zurich, Switzerland}
\email{thomas.willwacher@math.ethz.ch}

\title[The graph complex and the homology of the general linear groups]{The graph complex and the homology of the general linear groups revisited}

\date{}

\begin{document}

\begin{abstract}
    We revisit the relation between the Kontsevich commutative graph complex and the homology of $\GL(n,\Z)$ found by Brown--Chan--Galatius--Payne using the sharbly complex of Ash--Miller--Patzt.
\end{abstract}

\maketitle

\tableofcontents

\section{Introduction}
We describe a streamlined alternative variant of a connection between the Kontsevich graph homology and that of $\GL(n,\Z)$ and $\SL(n,\Z)$ that was put forward in \cite{BCGP}, but using the simpler methods of \cite{AMP}.
Let us recall or introduce the main objects of interest. 
We are interested in the rational cohomology of the general and special linear groups $H^\bullet(\GL(n,\Z); \Q)$ and $H^\bullet(\SL(n,\Z); \Q)$. Borel--Serre duality \cite{BorelSerre} implies that these satisfy, 
\begin{align*}
    H^{{\binom{n}{2}}-k}(\SL(n,\Z);\Q) &\cong H_k(\GL(n,\Z);\St_n\otimes \Q) \oplus H_k(\GL(n,\Z);\St_n\otimes \Q\otimes \det) \\
    H^{{\binom{n}{2}}-k}(\GL(n,\Z);\Q) &\cong 
    \begin{cases}
H_k(\GL(n,\Z);\St_n\otimes \Q) & \text{for $n$ odd} \\
H_k(\GL(n,\Z);\St_n\otimes \Q\otimes \det) & \text{for $n$ even}
    \end{cases}
\end{align*}
with $\St_n = \St_n(\Q)$ being the Steinberg module of the rationals (see Section \ref{sec:Steinberg}). Consider the direct sum
\[
\shA = \bigoplus_n H_{\bullet-n}(\GL(n,\Z);\St_n\otimes \Q).
\]
Then the following result has been shown independently by Brown--Chan--Galatius--Payne and Ash--Miller--Patzt, by distinct methods.
\begin{thm}[{\cite{AMP,BCGP}}]
There is a graded commutative Hopf algebra structure on the graded vector space $\shA$.
\end{thm}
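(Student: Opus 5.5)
We construct the product and the coproduct at the chain level, using the sharbly-type resolution of the Steinberg module from \cite{AMP}, and then check the Hopf compatibility. Concretely, $\St_n\otimes\Q$ has a resolution by the sharbly complex $\Sh_\bullet(\Q^n)$, whose degree-$p$ part is spanned by (oriented) tuples of $n+p$ nonzero vectors in $\Q^n$ that span $\Q^n$, modulo relations. Hence $H_k(\GL(n,\Z);\St_n\otimes\Q)$ is the homology of the coinvariants $(\Sh_\bullet(\Q^n))_{\GL(n,\Z)}$, possibly after tensoring with a free resolution to handle the nonfree action; rationally the coinvariants suffice, since stabilizers are finite. With the shift $\bullet-n$, a generator of the coinvariants is a $\GL(n,\Z)$-orbit of a spanning configuration of $N$ vectors in total degree $N$. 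This makes the grading additive under direct sums.

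\emph{Product.} The product is induced by block sum $\GL(n,\Z)\times\GL(m,\Z)\to\GL(n+m,\Z)$ together with the map $\Sh(\Q^n)\otimes\Sh(\Q^m)\to\Sh(\Q^{n+m})$ that concatenates configurations, $(v_1,\dots,v_a)\otimes(w_1,\dots,w_b)\mapsto(v_1,\dots,v_a,w_1,\dots,w_b)$ with $v_i\in\Q^n\oplus0$ and $w_j\in0\oplus\Q^m$. Our first check is that this is a chain map, which holds because deleting a vector commutes with concatenation up to the Koszul sign. The second check is that it is equivariant and passes to coinvariants. Associativity is immediate. Graded commutativity follows because the block swap is an element of $\GL(n+m,\Z)$, so it acts trivially on coinvariants, and the reordering of vectors produces exactly the Koszul sign $(-1)^{NM}$ in the total grading. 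The unit is the class in $n=0$.

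\emph{Coproduct.} For a configuration $(v_1,\dots,v_N)$ spanning $\Q^n$, we sum over all subsets $S$ whose span $W=\langle v_i: i\in S\rangle$ is a proper or improper subspace containing $v_i$ for $i\in S$ only, i.e.\ "saturated" splittings. We send the configuration to $\pm\,(v_i)_{i\in S}\otimes(\bar v_j)_{j\notin S}$, where the first factor lies in $\Sh(W)$ and the bars denote images in $\Q^n/W$. We then identify $W\cong\Q^{k}$ and $\Q^n/W\cong\Q^{n-k}$ through integral bases of the lattices $W\cap\Z^n$ and $\Z^n/(W\cap \Z^n)$. This identification is well defined only up to $\GL(k,\Z)\times\GL(n-k,\Z)$, and that ambiguity is exactly what is killed in coinvariants. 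Moreover, for $\gamma\in\GL(n,\Z)$ the term attached to $\gamma S$ is matched with the term attached to $S$, so the map descends.

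The main obstacle is showing that this coproduct is a chain map. When we delete a vector $v_j$, the span of a subset may drop, and the quotient configurations may acquire zero vectors or fail to span. I would handle this by working in the quotient where configurations that do not span, or that contain zero, vanish, as in \cite{AMP}. The boundary terms then pair off: a term where deleting $v_j$ changes the splitting data cancels against a term from a neighboring subset $S\cup\{j\}$ or $S\setminus\{j\}$, and the only surviving terms reproduce $(d\otimes1\pm1\otimes d)\Delta$.

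Coassociativity comes from flags $W_1\subset W_2$, summed in either order. The bialgebra compatibility $\Delta(xy)=\Delta(x)\Delta(y)$ needs one further input. For a concatenated configuration in $\Q^n\oplus\Q^m$, a subset $S$ gives a nonzero term only if its span is the sum of its intersections with the two blocks. Otherwise the quotient configuration contains vectors that are not in general position relative to a split, and we must show, possibly after passing to homology, that these "mixed" terms vanish or cancel. This is the second delicate point, which I would attack via a homotopy that shears the mixed vectors back into the blocks using unipotent elements of $\GL(n+m,\Z)$. Finally, $\shA$ is connected, since its degree-0 part is $\Q$, so the antipode exists automatically and $\shA$ is a graded commutative Hopf algebra.
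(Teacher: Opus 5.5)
Your construction is essentially the one the paper relies on. The paper does not reprove this theorem: it cites \cite{AMP,BCGP} and recalls the Ash--Miller--Patzt chain model $\shB=\bigoplus_n \Sh(\Z^n)_{\GL(n,\Z)}$, with the concatenation product \eqref{equ:sh prod} and the saturated-span coproduct \eqref{equ:sh coprod}. By Theorem~\ref{thm:AMP} this is a dg commutative Hopf algebra with homology $\shA$. Your variants are harmless. Using rational instead of integral vectors still resolves $\St_n\otimes\Q$, since this is just the sharbly complex over the PID $\Q$, and $\GL(n,\Z)$-stabilizers remain finite. Restricting the coproduct to closed subsets $S$ only discards terms that vanish by the nullity relation anyway.

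The step you misdiagnose is the bialgebra axiom: there are no ``mixed'' terms. Let $x=[v_1,\dots,v_a]\in\Sh(V_1)$ and $y=[w_1,\dots,w_b]\in\Sh(V_2)$. In the concatenation $xy$, every vector lies in $V_1\oplus 0$ or in $0\oplus V_2$. So for $S=S_1\sqcup S_2$ the span is automatically $W_{S_1}\oplus W_{S_2}$, with saturation $(W_{S_1})_{\sat}\oplus(W_{S_2})_{\sat}$. The quotient splits as the direct sum of the two quotients, with each $\bar v_i$ and $\bar w_j$ in its own summand. Hence the $S$-term of $\Delta(xy)$ is literally the product of the $S_1$-term of $\Delta(x)$ and the $S_2$-term of $\Delta(y)$. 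The unshuffle sign factors as the two unshuffle signs times the Koszul sign $(-1)^{|S_1^c||S_2|}$. So $\Delta(xy)=\Delta(x)\Delta(y)$ holds strictly on chains, and no unipotent shearing homotopy is needed. A homology-level argument would also forfeit the chain-level dg Hopf algebra $\shB$ that the paper later twists and maps into.

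Similarly, the chain-map property of $\Delta$ is a bijective matching rather than a cancellation. Take the term of $\Delta(dx)$ indexed by $(j,S)$ with $j\notin S$. If $v_j\notin W_S$, it equals the $1\otimes d$ term of the $S$-summand of $\Delta(x)$. If $v_j\in W_S$, it equals the $d\otimes 1$ term of the $(S\cup\{j\})$-summand; in that case the $S$-summand itself vanishes because $\bar v_j=0$.
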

Both methods have been shown to yield the same Hopf algebra structure in \cite{PalPayne}.
We will follow here the approach of \cite{AMP}. They show the above theorem by explicitly constructing a Hopf algebra structure on a chain level model for $\shA$,
\begin{equation}\label{equ:B def into}
\shB := \bigoplus_n \Sh(\Z^n)_{\GL(n,\Z)},
\end{equation}
where the sharbly complex $\Sh(\Z^n)$ is a flat resolution of the Steinberg module $\St_n\otimes \Q$ up to a degree shift. We refer to Section \ref{sec:sharbly} for more details and the formulas describing the Hopf algebra structure. Here we just remark that elements of $\Sh_{k}(\Z^n)$ are essentially $\Q$-linear combinations of $n\times k$ integer matrices whose columns span $\Q^n$.

Another result of \cite{BCGP} is the construction of maps relating the Hopf algebra $\shA$ above to the Kontsevich graph complex $\GC_0$, and to the K-theory of $\Z$.
Analogous results are not present in \cite{AMP}.
The purpose of this paper is to fill that gap. In fact, we shall see that the sharbly formalism allows for a very clean and beautiful formulation of the link between the cohomology of $\GC_0$ and that of $\GL(n,\Z)$.

The Kontsevich graph complex $\GC_0$ is a dg Lie algebra of $\Q$-linear combinations of connected graphs with at least trivalent vertices, see Section \ref{sec:graphs} for the precise definition.
The subscript ``0'' (as opposed to the more commonly used $\GC_2$) refers to the fact that for us the cohomological grading is by number of edges of the graphs.
To state our results most conveniently, we shall need a slight variant. Namely, we pass to a larger dg Lie algebra 
\[
\GC_0^+ := \GC_0 \oplus \Q \tadpole
\]
by adding a single graph cocycle with a bivalent vertex, which we will call the \emph{tadpole graph} in this paper.
We then consider the graded dual dg Lie coalgebra $\G_0^+$ of $\GC_0^+$. Elements of $\G_0^+$ are linear combinations of graphs, but equipped with the dual differential and Lie coalgebra structure. We denote by $U^c\G_0^+$ the universal coenveloping Hopf algebra of $\G_0^+$. It is naturally a dg commutative Hopf algebra.
Then our first main result is:
\begin{introthm}\label{thm:main 1}
    The map 
    \[
    \Phi \colon U^c\G_0^+ \longrightarrow \shB
    \]
    sending a graph to its loop-edge adjacency matrix, understood as a sharbly,\footnote{See Section \ref{sec:Phi} below for a more precise definition of $\Phi$.} is a morphism of dg Hopf algebras, where the right-hand side is equipped with the Hopf algebra structure of \cite{AMP}. 
\end{introthm}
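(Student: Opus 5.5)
Since $U^c\G_0^+$ is cofree as a conilpotent coalgebra on $\G_0^+$ (with the shuffle product) and is generated as an algebra by $\G_0^+$, I would reduce the statement to checks on single graphs and their products. Following the footnote, define $\Phi$ on a word $\gamma_1\cdots\gamma_r$ of connected graphs as the sharbly given by the block-diagonal loop-edge adjacency matrix of the disjoint union $\gamma_1\sqcup\cdots\sqcup\gamma_r$. For a graph $\gamma$ with loop order $n$ and $k$ edges, fix a basis of $H_1(\gamma;\Z)\cong\Z^n$. The $e$-th column of the $n\times k$ matrix records the coefficient of edge $e$ in each basis cycle. The choice of basis changes the matrix by left multiplication by $\GL(n,\Z)$, so the class in the coinvariants $\Sh(\Z^n)_{\GL(n,\Z)}$ is well defined. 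Orientation signs (edge orderings in $\GC_0$) should match the alternating sign conventions on sharbly columns. The columns span $\Q^n$ because every cycle uses some edge. I would verify these well-definedness points first, including that the relevant parity (edge-odd) makes the degrees match $\shB$.

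Next, the algebra structure. The AMP product on $\shB$ is induced by the direct sum $\Z^n\oplus\Z^m\to\Z^{n+m}$, with block-diagonal concatenation of columns and a shuffle sign. On the graph side, the shuffle product of words maps to the disjoint union. Since the adjacency matrix of a disjoint union is block-diagonal, multiplicativity follows essentially by definition once signs are matched.

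The coproduct is harder. The AMP coproduct sums, over direct-sum splittings (or flags) $\Z^n=A\oplus B$ compatible with the columns, the pair (columns lying in $A$, images of the remaining columns in $\Z^n/A$). On the graph side, the coproduct of $U^c\G_0^+$ applied to a word deconcatenates it, and the Lie cobracket on $\G_0^+$ is dual to insertion, i.e.\ it sums over subgraphs $\delta\subset\gamma$ as $\delta\otimes\gamma/\delta$. The key observation is that a subset $S$ of edges spans a saturated sublattice $A=H_1(\delta)\subset H_1(\gamma)$, and the quotient matrix is the loop-edge matrix of $\gamma/\delta$. I would set up a bijection between subgraphs (possibly disconnected, in iterated form) and the sublattices spanned by columns that contribute nontrivially, then check that the terms which do not come from subgraphs cancel or vanish in coinvariants. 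One such case is a saturated sublattice not of the form $H_1$ of an edge subset.

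Finally, the differential. The differential on $\Sh$ deletes columns, with terms vanishing when the remaining columns fail to span. The dual graph differential on $\G_0^+$ deletes an edge (dual to vertex splitting) or equivalently contracts it. Here I expect the main obstacle: sharbly deletion of column $e$ corresponds to deleting the edge $e$ only when $e$ is not a bridge. Contraction of a non-loop edge leaves $H_1$ unchanged, so its column comes from the other side. I would argue carefully that the dual differential on $\G_0$ (cut edges) matches, and that terms producing bivalent vertices or lower loop order vanish or are accounted for by the tadpole. The tadpole class is $\Phi(\tadpole)=[1]\in\Sh_1(\Z^1)$, and it is needed precisely for cutting edges of one-loop pieces, which is why $\GC_0^+$ rather than $\GC_0$ appears.
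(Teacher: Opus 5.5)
Your outline (loop-edge matrix, well-definedness via $\GL$-coinvariants and the sign and permutation relations, then product, coproduct, differential) matches the paper's. However, the step you flag as the main obstacle rests on a misidentification, and as planned it would fail. The differential on $\G_0^+$ is dual to vertex splitting, so it \emph{contracts} edges. Cutting a non-bridge edge is the operator $\nabla$ (the action of $\frac12 T$, Lemma~\ref{lem:T action}), which lowers the loop order and corresponds under $\Phi$ to the deformation $D$, not to $d$. Sharbly column deletion cannot match edge deletion: $[v_1,\dots,\hat v_j,\dots,v_k]$ stays in $\Sh(H^1(\Gamma;\Z))$ of rank $n$, while $\Gamma-e_j$ has loop order $n-1$ for a non-bridge $e_j$.

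Your side remark that contracting a non-loop edge leaves $H_1$ unchanged is in fact the whole mechanism:
\begin{itemize}
\item For a non-tadpole edge, $H^1(\Gamma/e_j;\Z)=H^1(\Gamma;\Z)$ with the other edge classes unchanged, so $\Phi(\Gamma/e_j)$ is exactly the $j$-th term of $d\Phi(\Gamma)$.
\item Bridges are exactly the zero columns. On $\fG_0$ the bridge-deletion terms $d_b$ of Lemma~\ref{lem:d formula} cancel the bridge contractions after applying $\Phi$ (same sharbly, opposite sign).
\item Tadpoles are exactly the coloops, whose deletion is killed by the span relation.
\end{itemize}
Relatedly, the tadpole is not there ``for cutting edges of one-loop pieces''; it is a $d$-cycle. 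It must survive because $\Phi(t)=[1]\neq 0$, and tadpoles occur as factors $\Gamma/E$ in the coproduct (take $E=E(\Gamma)\setminus\{e\}$ for a non-bridge edge $e$). These match the rank-one factors $[v_i]$ in \eqref{equ:sh coprod}. You also need that $\Phi$ kills graphs with a univalent vertex (the antenna class is a coboundary, giving a zero column) or a non-tadpole bivalent vertex (two columns agree up to sign). The structure maps of $U^c\G_0^+$ are those of $\fG_0$ with such graphs discarded, and this is what guarantees the discarded terms map to zero.

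The coproduct step is missing its key input. You can reduce to connected graphs because $U^c\G_0^+$ is free as a commutative algebra on $\G_0^+$; cofreeness is not the reason, and it does not hold. Even for connected $\gamma$, the coproduct is neither deconcatenation nor read off from the Lie cobracket alone. The identification $U^c\fGc_0\cong S\fGc_0=\fG_0$, with disjoint union as product, comes from the pre-Lie coproduct (subgraph contraction) via the dual Oudom--Guin construction. It yields $\Delta\Gamma=\sum_{E\subset E(\Gamma)}\pm\,\Gamma/E\otimes\Gamma(E)$ over \emph{all} edge subsets, with $\Gamma(E)$ possibly disconnected and its components contracted separately; see \eqref{equ:gc coprod}. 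Since \eqref{equ:sh coprod} is likewise indexed by subsets of columns, not by splittings or sublattices, the comparison is then term by term. No cancellation of ``extra sublattice'' terms is ever needed.

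Your matching of the pieces is, however, backwards. The columns are classes in $H^1$, not $H_1$. By \eqref{equ:h1 exact}, the columns indexed by $I=E(\Gamma)\setminus E$ saturate to $H^1(\Gamma/E;\Z)$. The quotient $H^1(\Gamma;\Z)/\vspan(e_i\mid i\in I)_{\sat}\cong H^1(\Gamma(E);\Z)$ carries the loop-edge matrix of the subgraph. For example, for the theta graph and $S=\{e_1\}$, the span of $e_1$ has rank one, while $H_1$ of the one-edge subgraph vanishes. So the first tensor factor is the contracted graph and the second is the subgraph, the opposite of your assignment $A=H_1(\delta)$ with quotient $\gamma/\delta$.
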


Using Brown--Schnetz \cite[Theorem 4(i)]{BrownSchnetz}, it is easy to show (see Lemma \ref{lem:wheelnonzero}) that the map $\Phi$ sends the wheel classes $W_{2k+1}\in \G_0$ to non-zero classes in $H(\shB)$. We also give a new proof using Goncharov's Grassmannian polylogarithm \cite{Goncharov} in Remark \ref{rem:wheel indep}.

Theorem \ref{thm:main 1} and its proof in fact imply the following lower bound on the homology for all PIDs $\RR \subset \C$, improving upon similar results of Brown--Chan--Galatius--Payne \cite{BCGP} and Ash--Miller--Patzt \cite{AMP}.
\begin{introthm}\label{thm:PID lower}
    Let $\RR$ be a PID with field of fractions $F$ such that an injective ring morphism $\RR\to \C$ exists.
    Let 
    \[
    \shA_{\RR} = \bigoplus_n H_{\bullet-n}(\GL(n,\RR);\St_n(F)\otimes \Q)
    \]
    be the Hopf algebra constructed by Ash--Miller--Patzt \cite{AMP}.
    Then there is a surjective morphism of bigraded (by degree and rank) Hopf algebras
    \[
    \shA_{\RR} \otimes \C \longrightarrow \Lambda_\C ( t) \otimes T_\C(s_3,s_5,\dots),
    \]
    where $T_\C (s_3,s_5,\dots)$ on the right-hand side is the tensor coalgebra in symbols $s_m$ of degree $2m$ and rank $m$ for $m\geq 3$ odd, equipped with the shuffle product, $t$ is a symbol of degree $1$ and rank $1$, and $\Lambda_\C (t)\cong \C \oplus \C t$ is the exterior algebra.
\end{introthm}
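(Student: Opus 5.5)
We construct the surjection dually, via a morphism of Hopf algebras in the opposite direction on the level of primitives/generators, realized by integrating explicit differential forms over sharblies. Concretely, we build a linear functional on the chain complex $\shB_\RR=\bigoplus_n \Sh(\RR^n)_{\GL(n,\RR)}$ (after $\otimes\C$) for each generator. Fix an embedding $\RR\hookrightarrow\C$, so that $F\subset\C$ and a sharbly (an $n\times k$ matrix with entries in $\RR$ whose columns span $F^n$) gives a configuration of $k$ vectors in $\C^n$.

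For $m\ge 3$ odd we use the Goncharov-type cocycle: to an $m\times 2m$ matrix we assign the integral over $\CP^{m-1}$ (or the relevant projective simplex/Grassmannian) of the canonical form $\omega_{2m-1}$ built from the linear forms given by the columns. This is the same form that shows the wheel classes are nonzero in Remark~\ref{rem:wheel indep}. Its $\GL(n,\C)$-invariance gives invariance under $\GL(n,\RR)$. Its closedness, via Stokes' theorem, gives the cocycle property on the sharbly complex, with rank/degree $(m,2m)$ matching $s_m$. For $t$ we take the functional on rank one sharblies $(a)$, $a\in\RR\setminus 0$, given by a constant, which detects the class of the $1\times 1$ sharbly; this is essentially the image of the tadpole.

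Next, we define a map to $T_\C(s_3,s_5,\dots)\otimes\Lambda_\C(t)$ by evaluating iterated coproducts. Write $\bar\Delta^{(r)}$ for the reduced iterated coproduct of \cite{AMP}. A class $x$ maps to
\[
\sum_r \sum \langle \phi_{i_1}\otimes\cdots\otimes\phi_{i_r}, \bar\Delta^{(r)}x\rangle\, s_{i_1}\cdots s_{i_r},
\]
with the $t$ factor handled likewise. Any Hopf algebra map of a graded connected commutative Hopf algebra into a tensor coalgebra with shuffle product is determined by such a collection of "primitive-valued" functionals. It is automatically a coalgebra map, and it is an algebra map provided each $\phi_i$ vanishes on decomposables, i.e.\ on products of sharblies. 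We expect this vanishing to follow because the product of \cite{AMP} is block-diagonal: the form restricted to a block-sum configuration is pulled back from a product of lower-dimensional spaces and vanishes for degree reasons.

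Surjectivity follows from Theorem~\ref{thm:main 1}. Pushing the wheels $W_{m}$, and the tadpole for $t$, through $\Phi$ into $\shB\to\shB_\RR$ (integer matrices are in $\RR$), Lemma~\ref{lem:wheelnonzero} and Remark~\ref{rem:wheel indep} show that $\phi_m(\Phi(W_m))\ne0$, while $\phi_{m'}$ vanishes on $\Phi(W_m)$ for $m'\neq m$ by rank. Since the map is a Hopf map and $\Phi$ is a Hopf map out of $U^c\G_0^+$, products and iterated brackets of wheels hit all words. More precisely, we use the filtration by word length: the top term of the image of $W_{i_1}\cdots W_{i_r}$ (under the shuffle product of the images) spans all shuffles. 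To obtain arbitrary words, and not only symmetric shuffles, we need non-commutativity. Here we would instead use elements of the cofree side, namely classes in $U^c\G_0^+$ dual to iterated Lie cobrackets of wheels. The main obstacle is exactly this step: showing that the tensor coalgebra, rather than just its symmetric part, is hit. We would try to argue that the induced map on indecomposables is surjective onto each $s_m$, then invoke that a Hopf map into a cofree-type target $T(V)$ surjecting onto $V$ on primitives of the dual is surjective. Dually, the map $\C[\![\ldots]\!]$-dual $T(V)^\vee\to\shA_\RR^\vee$ is injective because the free Lie algebra on the dual generators injects. This would be verified by checking that the dual generators are Lie-algebra free in $(\shA_\RR\otimes\C)^\vee$, using the cocommutative dual structure from $U^c\G_0^+$.
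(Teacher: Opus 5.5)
Your overall architecture matches the paper's: Goncharov cocycles plus $\eta$, dualized into a cofree target, and detected on the graph side through $\Phi$ and $\shB\to\shB_\RR$. But there is a genuine gap exactly at the step you call the main obstacle, and the principle you propose to invoke there is false.

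A Hopf map $\shA_\RR\otimes\C\to T_\C(s_3,s_5,\dots)$ built from functionals $\phi_m$ vanishing on decomposables is surjective only if the induced Lie map $\FreeLie(\sigma_3,\sigma_5,\dots)\to$ (dual), $\sigma_m\mapsto[\phi_m]$, is injective. That is, the $[\phi_m]$ must generate a \emph{free} Lie subalgebra. Hitting each letter $s_m$ (linear independence of the $[\phi_m]$) is not enough. For a polynomial Hopf algebra on primitive generators $x_m\mapsto s_m$, the image is only the shuffle subalgebra generated by the letters, which misses the word $s_3s_5$ on its own.

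For the same reason, products of wheels only reach this commutative part. Classes ``dual to iterated cobrackets of wheels'' cannot be manufactured from the wheels, whose cobracket vanishes; their existence is equivalent to the freeness you would have to prove.

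The paper supplies precisely this input:
\begin{enumerate}
\item Along $\Phi^*\colon\fg_{\mathbf B}\to\GC_0^+\otimes\C$, each $\Phi^*(\alpha_{\mathrm G}^{2m})$ has non-zero coefficient of the dual wheel $W_m^*$ (Proposition \ref{prop:Goncharov nonvanishing}).
\item Iterated brackets of these classes lie in $H^{2k}(\GC_0^{k\text{-loop}})\otimes\C\cong\grt_1^k$, with the wheel coefficient corresponding to that of $\ad_X^{2j}Y$ (Theorem \ref{thm:Wgrt}).
\item Brown's theorem (Theorem \ref{thm:W freeness}) says such elements generate a free Lie algebra.
\end{enumerate}
This gives injectivity of $\mathfrak t\to H(\GC_0^+)\otimes\C$ (Corollary \ref{cor:inj}). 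PBW, applied with the finite-type source $H(U^c\G_0^+)\otimes\C$, converts it into surjectivity, which then factors through $\shA_\RR\otimes\C$. Without the Grothendieck--Teichm\"uller identification and Brown's freeness theorem, your argument cannot see the non-commutative part of the target.

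There is a second, smaller omission in constructing the map itself. The target $\Lambda_\C(t)\otimes T_\C(s_3,s_5,\dots)$ is $U^c\mathfrak t^*$ for $\mathfrak t=\C T\oplus\FreeLie(\sigma_3,\sigma_5,\dots)$ with $T$ central and $[T,T]=0$. It is not the tensor coalgebra on $t,s_3,s_5,\dots$. Treating $t$ ``likewise'' only yields a Hopf map into $T_\C(t,s_3,s_5,\dots)$. To land in $\Lambda_\C(t)\otimes T_\C(s_3,\dots)$ you must verify $[\eta,\eta]=0$ and $[\eta,\alpha_{\mathrm G}^{2m}]=0$.
\begin{itemize}
\item The first is the rank-two computation in Lemma \ref{lem:eta}.
\item The second is equivalent to $\alpha_{\mathrm G}\circ D=0$. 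This is Goncharov's hyperplane-restriction identity (part (3) of Theorem \ref{thm:Goncharov}, with the admissibility condition), and it does not follow from the Stokes argument that gives $\alpha_{\mathrm G}\circ d=0$.
\end{itemize}
Also, your justification of the vanishing on block sums is slightly off. The integrand is not pulled back from $\CP^{m-1}\times\CP^{n-1}$; it is basic for the circle action rotating one block, which is the argument of Lemma \ref{lem:alphaG Lie like}.
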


An essentially equivalent result was established independently by Brown--Panzer--Portner \cite[Theorem 1]{BrownPanzerPortner}.

Next, let $\nabla$ be the action of the special graph $\frac 12 T = \frac12 \tadpole\in \GC_0^+$ on $U^c\G_0^+$, i.e. 
\[\nabla x = -\frac12 (T \otimes \id)\Delta(x) + \frac 12 (\id \otimes T)\Delta(x)\]
for $x\in U^c\G_0^+$, where $x$ is homogeneous of degree $n$.
Graphically, the operation $\nabla$ removes an edge between two vertices, reducing the total loop order by one. (See Lemma \ref{lem:T action}.)
Then one can check that $d\nabla+\nabla d=0$ and $\nabla^2=0$, where $d$ is the differential on $U^c\G_0^+$. 
It follows that $d+\nabla$ is again another differential on $U^c\G_0^+$.
It follows from computations of \cite{KWZ} that the homology with respect to the twisted differential is
\[
H(U^c\G_0^+,d+\nabla) \cong H(BK(\Z);\Q) \cong \Q[t,W_3, W_5,W_7,\dots],
\]
where $\Q[\cdots]$ denotes the free graded commutative algebra in the given generators.  It is isomorphic to the rational homology of $BK(\Z)$.

Our second main result is:
\begin{introthm}\label{thm:main 2}
    Let $\Phi$ be the map of Theorem \ref{thm:main 1}. Then the following hold.
    
    \begin{enumerate}[(i)]
        \item There is a deformation $d+D$ of the differential $d$ on $\shB$, defined in Section \ref{sec:deformed D} below, such that $\Phi\circ (d+\nabla)=(d+D)\circ \Phi$, so that $\Phi$ defines a morphism of dg Hopf algebras 
        \begin{equation}\label{equ:Phi deformed}
        \Phi \colon (U^c\G_0^+, d+\nabla) \longrightarrow (\shB, d+D).
        \end{equation}
        \item This morphism \eqref{equ:Phi deformed} is a quasi-isomorphism of dg Hopf algebras. The homology of either side may be identified with 
        \[
        \Q[t,W_3,W_5,W_7,\dots] \cong H_\bullet(BK(\Z);\Q),
        \]
        where $t$ in $(U^c\G_0^+, d+\nabla)$ is represented by the tadpole graph of degree 1 and $W_n$ is the wheel graph with $n$ spokes of degree $2n$. 
        \end{enumerate}
\end{introthm}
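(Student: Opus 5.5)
Part (i) is a computation, and Theorem \ref{thm:main 1} is the first input. The deformation $D$ on $\shB$ will be defined as the sharbly counterpart of ``remove an edge between two vertices''. Under $\Phi$ a graph $\Gamma$ becomes its loop-edge adjacency matrix: the rows correspond to a basis of $H_1(\Gamma)$ and the columns to the edges. Deleting a non-separating edge lowers the loop order by one. On matrices, I expect this to be: project $\Z^n\to\Z^{n-1}$ along a column $v_i$ that is primitive, and delete that column. Summing over the columns with signs gives $D$ on $\Sh(\Z^n)_{\GL(n,\Z)}$. Taking $\GL(n,\Z)$-coinvariants makes $D$ well defined, because any primitive vector can be moved to $e_n$. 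Alternatively, and more structurally, $D$ should be the coaction of the image $\Phi(T)$ of the tadpole: $T$ maps to the rank-one sharbly $[1]$, and $D$ is the same combination $-\frac12([1]\otimes\id)\Delta+\frac12(\id\otimes[1])\Delta$ formed with the AMP coproduct, paired against the functional dual to $[1]$.

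With this second definition, the identity $\Phi\circ\nabla=D\circ\Phi$ follows from Theorem \ref{thm:main 1}, since $\Phi$ is a coalgebra map and sends $T$ to $[1]$. The identities $dD+Dd=0$ and $D^2=0$ then follow formally:
\begin{itemize}
\item[] $dD+Dd=0$ holds because $[1]$ is a cycle and $\Delta$ is a chain map;
\item[] $D^2=0$ holds because $D$ is an odd coderivation.
\end{itemize}
I would still check Lemma \ref{lem:T action}'s graphical description against the column-projection description, since the latter is what is needed later.

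For part (ii), I would filter both sides by rank $n$, i.e.\ loop order. The differential $d$ preserves rank and $D$ lowers it by one, so this filtration is exhaustive and bounded below in each degree, and the associated spectral sequences converge. Comparing $E_1$-pages through $\Phi$ would be the naive route. However, $H(U^c\G_0^+,d)$ is not known, so it is better to compute the targets directly.

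The source side is already known: $H(U^c\G_0^+,d+\nabla)\cong\Q[t,W_3,W_5,\dots]$ by \cite{KWZ}. It therefore suffices to show two things. First, $H(\shB,d+D)$ is abstractly the same size in each bidegree. Second, $\Phi$ is injective on homology; injectivity on the generators is enough, because everything is a Hopf algebra.

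For the target I would identify $(\shB,d+D)$ with a chain model of the rational homology of $BK(\Z)$, or equivalently of $\coprod_n B\GL(n,\Z)$ group-completed. The route I would try is to show that $(\shB,d+D)$ is the coinvariants of a total complex that assembles the sharbly resolutions for all ranks. Column projection plays the role of the boundary maps in a Quillen-type $Q$-construction or Charney-type split-building complex, and the total complex is acyclic up to a $\GL(n,\Z)$-free part computing $H(\GL(n,\Z);\Q)$. Borel's stable computation, combined with stability, then gives $\Q[t,W_3,\dots]$ with $W_{2k+1}$ in degree $4k+2$. The exterior class $t$ must appear because of the $\det$-twist conventions and the degree shift. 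The wheels are detected nontrivially by Lemma \ref{lem:wheelnonzero}, or by the Goncharov/Borel regulator classes of Theorem \ref{thm:PID lower}. Those functionals are primitive and compatible with $D$, so they detect $\Phi(W_{2k+1})$ in $H(\shB,d+D)$, and the tadpole is detected by a rank-one evaluation. Hence $\Phi$ is injective on the primitive generators and therefore on the free commutative algebra, and the dimension count gives surjectivity.

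The main obstacle is this identification of $H(\shB,d+D)$ with $H(BK(\Z);\Q)$. In particular one must prove that the deformed complex has no extra homology. I would attack it by finding a contracting homotopy for $D$ on the sharbly side modulo the ``stable'' part, by adding a column $e_{n+1}$, which is inverse to projecting out a primitive column.
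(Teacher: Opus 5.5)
Your skeleton for (ii) matches the paper's. You use \cite{KWZ} for the source, an abstract degreewise dimension count for $H(\shB,d+D)$, and injectivity of $[\Phi]$ by induction on degree via primitives. In that step $\eta$ detects $t$ and the Goncharov cocycle detects the wheels, since $\alpha_{\mathrm G}\circ d=\alpha_{\mathrm G}\circ D=0$. The Ash--Miller--Patzt wheel lemma alone would not suffice, because it only gives non-triviality in $H(\shB,d)$.

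The genuine gap is the step you flag yourself, $H(\shB,d+D)\cong H_\bullet(BK(\Z);\Q)$, and the attack you propose fails. Adding the column $e_{n+1}$ is right multiplication by $[1]=\Phi(t)$. Since $D$ is the action of an element of $\fg_{\shB}$, it is a derivation, and $D[1]=0$. So this map commutes with $D$ (and with $d$): it is multiplication by the cycle representing $t$, not a contraction. Concretely, in $D(x\cdot[e_{n+1}])$ the $D_1$-term that quotients out $e_{n+1}$ returns $\pm x$. It is cancelled exactly by the $D_2$-term restricting to the saturated span of the other columns, because $e_{n+1}$ is a coloop.

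What the paper supplies instead is an explicit chain model built from the cubical groupoid of enhanced ordered configurations.
\begin{itemize}
\item An object is a tuple $\mathbf v$ together with a strictly functorial choice of all quotients $P_J/P_I$ of saturated spans; this enhancement is needed so the cubical identities hold on the nose.
\item The faces are $R_j$ (pass to the saturated span of the other vectors) and $C_j$ (quotient by $(\Z v_j)_{\sat}$), so the boundary $\sum_j(-1)^{j-1}(R_j-C_j)$ maps to $d+D_1+D_2$.
\item Forgetting the nerve direction is a quasi-isomorphism onto $(\shB,d+D)$, since stabilizers are finite.
\item The map $\rho=\sum_\sigma(-1)^\sigma\operatorname{Flag}_\sigma$ to Waldhausen's $S_\bullet\Proj(\Z)$ is a quasi-isomorphism. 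After filtering by rank, this reduces to the homotopy-coinvariant version of the equivariant quasi-isomorphism $\chi\colon\Sh(\Z^n)\to\hat N\T_\bullet(\Q^n)[-2]$, plus the fact that flags with a repeated rank vanish in nerve homology.
\end{itemize}
Nothing in your sketch replaces this.

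Part (i) also needs repair. The rank-one, length-one part of $\shB$ is not spanned by $[1]$: the $[a]$ with $a\geq 1$ are distinct, and $d[1,a]=[a]-[1]$.
\begin{itemize}
\item The functional that is $1$ on $[1]$ and $0$ on the other $[a]$ is therefore not closed, and its action does not anticommute with $d$. The Maurer--Cartan element must be $\eta$, the sum of the coefficients of all $[a]$ with $a\neq 0$. Since $\eta(\Phi(t))=1$ and the pairing carries the factor $|\operatorname{Aut}(t)|=2$, one already has $\Phi^*\eta=\tfrac12T$. So $D$ is the full action of $\eta$, with no extra $\tfrac12$.
\item Your two justifications check the wrong things. $dD+Dd=0$ needs $\eta\circ d=0$, not that $[1]$ is a cycle. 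An odd coderivation need not square to zero: $D^2=0$ is the condition $[\eta,\eta]=0$, i.e.\ $(\eta\otimes\eta)\Delta[v_1,v_2]=0$ on rank-two sharblies of length two, and this has to be verified.
\item The column-projection description is not the action of $\eta$ on integral sharblies. $D=D_1+D_2$ quotients by $(\Z v_j)_{\sat}$ for every column, and it contains coloop terms that do not cancel over $\Z$ (e.g.\ for $[(2,0),(1,2)]$). The operator $D'$ using only primitive columns already violates $D'd+dD'=0$ on $[1,2]\in\Sh(\Z)$.
\end{itemize}
These omitted $D_2$-terms are exactly what defeats your proposed homotopy.
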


Both sides of \eqref{equ:Phi deformed} hence compute the rational K-theory of $\Z$.
We remark that a version of the above results is contained in \cite{BCGP}, and we do not claim originality. Our reformulation has the advantage that it is leaner and (arguably) more natural, in that we do not need to modify the standard definition of the graph complex beyond taking a universal enveloping coalgebra. Note that the spectral sequence coming from the double complex $(\shB,d,D)$ is a spectral sequence of Hopf algebras as in \cite{BCGP}.

\subsection*{Acknowledgements}
We thank F. Brown and A. Goncharov for helpful discussions.

\subsection*{AI usage statement}
The project originated before the available AI tools were strong enough to meaningfully contribute. In the later stages ChatGPT pro was used for proofreading, discussions and literature research.

\section{Hopf algebra of sharblies }
\label{sec:sharbly} 
\subsection{The Tits building and Steinberg module}
\label{sec:Steinberg}
Let $\RR$ be a PID and $F$ its field of fractions.
The \emph{Tits building} $\T_\bullet(V)$ of an $n$-dimensional $F$-vector space is a simplicial set whose $p$-simplices are ascending chains 
\[
V_0 \subset V_1\subset \cdots \subset V_p
\]
of nonzero proper subspaces $0\subsetneq V_j \subsetneq V$.
The simplicial face maps $d_j$ ($j=0,\dots,p$) simply forget the $j$-th subspace of the chain,
\[
d_j(V_0 \subset V_1\subset \cdots \subset V_p)
=
(V_0 \subset V_1\subset \cdots \subset \hat V_j \subset \cdots \subset V_p)
\]
and the simplicial degeneracy maps $s_j$ ($j = 0, \dots, p$) double the $j$-th subspace
\[
s_j(V_0 \subset V_1\subset \cdots \subset V_p)
=
(V_0 \subset V_1\subset \cdots \subset V_j \subset V_j \subset \cdots \subset V_p).
\]

Let $n\geq 2$.
By the Solomon--Tits Theorem \cite{Solomon} the reduced homology of $\T_\bullet(V)$ is concentrated in degree $n-2$, and the \emph{Steinberg module} is defined to be this reduced (integral) homology,
\[
\St(V) := \widetilde H_{n-2}(\T_\bullet(V);\Z).
\]
In particular, we will set $\St_n(F):=\St(F^n)$.

In the special cases $n=0,1$ the Tits building $\T(F^n)$ is empty.
Nevertheless, we define the Steinberg module to be $\St_0(F)=\St_1(F)=\Z$.

\subsection{Augmented reduced complex}\label{sec:red aug}
Below, we will use the augmented reduced normalized rational chain complex $\hat N \T_\bullet(V)$ of the Tits building.
It is the same as the normalized chain complex with $\Q$-coefficients, but we add another generator $\emptyset$ in degree $-1$ that can be thought of representing the empty chain of subspaces in the Tits building.
\[
\hat N \T_\bullet(V) = N \T_\bullet(V)\oplus \Q\emptyset.
\]
The differential is extended by sending any vertex of $\T_\bullet(V)$ to $\emptyset$, so that $\hat N \T_\bullet(V)$ computes the reduced homology of $\T_\bullet(V)$, as long as $\vdim V\geq 2$.
This convention also fits well with the definition of $\St_n$ in the degenerate case $n=1$, where now $H(\hat N \T_\bullet(V))\cong \Q\emptyset\cong \St_n(F)\otimes \Q$, concentrated in degree $-1$.
\subsection{The sharbly Hopf algebra}
We recall the definition of the Hopf algebra of sharblies from \cite{AMP}. 
Let $\RR$ be a PID, $F$ its field of fractions, and let $V$ be a free $\RR$-module of rank $n$. Then we define the differential graded $\Q$-vector space $\Sh(V)$ whose elements of degree $N$ are formal $\Q$-linear combinations of tuples 
\[
[v_1,\dots, v_{N}]
\]
of vectors $v_1,\dots,v_{N}\in V$, modulo the following relations:
\begin{itemize}
    \item Antisymmetry: For $\sigma\in S_{N}$ a permutation we set 
    \begin{equation}\label{equ:sh perm rel}
    [v_1,\dots, v_{N}] = (-1)^\sigma [v_{\sigma(1)},\dots,v_{\sigma(N)}].
    \end{equation}
    \item Nullity: If any $v_j=0$, then we set 
    \begin{equation}\label{equ:sh null rel}
    [v_1,\dots, v_{N}] = 0.
    \end{equation}
    \item Span: If $v_1,\dots,v_{N}$ do not span $V \otimes_\RR F$ as an $F$-vector space, then we set 
    \begin{equation}\label{equ:sh span rel}
    [v_1,\dots, v_{N}] = 0.
    \end{equation}
    \item Sign: For any $j$
    \begin{equation}\label{equ:sh sign rel}
    [v_1,\dots,v_j,\dots, v_{N}] =[v_1,\dots,-v_j,\dots, v_{N}].
    \end{equation}
    %\todo[inline]{This relation appears also in other papers, but destroys the finite stabilizer property. }
\end{itemize}
Note in particular that for the case of $V=0$ being the null module we have $\Sh(0)\cong \Q$, spanned by the ``empty'' sharbly $[\,]$.

The differential is 
\begin{equation}\label{equ:sh diff}
d [v_1,\dots, v_{N}] = 
\sum_{j=1\atop v_j\neq 0}^N (-1)^{j-1} [v_1,\dots,\hat v_j,\dots, v_{N}].
\end{equation}
Note that the extra condition $v_j\neq 0$ in the sum is necessary for otherwise the differential would not respect the nullity relation \eqref{equ:sh null rel}. Of course, due to the same relation, we will usually only apply the differential to sharblies for which all $v_j\neq 0$. Furthermore, we call an element $v_j$ of a non-zero sharbly $[v_1,\dots,v_N]\in \Sh(V)$ a \emph{coloop} if $\vspan(v_1,\dots,\hat v_j,\dots,v_N)\neq V\otimes_R F$.
Note that removing a coloop in \eqref{equ:sh diff} results in a zero sharbly. Hence we can equivalently write
\begin{equation}\label{equ:d simplified}
d [v_1,\dots, v_{N}] = 
\sum_{j=1\atop {v_j\neq 0 \atop v_j \text{ no coloop}} }^N (-1)^{j-1} [v_1,\dots,\hat v_j,\dots, v_{N}].
\end{equation}

Lee--Szczarba \cite{LeeSzczarba} first introduced the chain complex $\Sh(V)$ and proved that its homology is isomorphic to $\St(V \otimes_\RR F)\otimes \Q$, concentrated in degree (length) $n$.

There is a natural product operation 
\begin{equation}\label{equ:sh prod}
\begin{gathered}
\Sh(V)\otimes \Sh(W) \longrightarrow \Sh(V\oplus W) \\
[v_1,\dots, v_{N}]
\cdot
[w_1,\dots, w_{M}]
=
[v_1,\dots, v_{N},w_1,\dots, w_{M}],
\end{gathered}
\end{equation}
where on the right-hand side we understand that $v_j\in V\subset V\oplus W$ etc.
 
There is, furthermore, a coproduct operation 
\begin{gather}
\Sh(V) \longrightarrow \bigoplus_{\substack{W\subset V\\W\text{ saturated}}}
\Sh(W) \otimes \Sh(V/ W) \\
[v_1,\dots, v_{N}]
\mapsto 
\sum_{m=0}^N\sum_{1\leq i_1<\cdots <i_m\leq N}\pm 
[v_{i_1},\dots, v_{i_m}] \otimes [\bar v_1,\dots, \hat v_{i_1} ,\dots \hat v_{i_m},\dots, \bar v_{N}]
,
\label{equ:sh coprod}
\end{gather}
where the sign is the sign of the unshuffle permutation corresponding to the choice of subset $I=\{i_1,\dots ,i_m\}$. The term corresponding to $I$ on the right-hand side lives in the summand corresponding to 
\[
W=\vspan_F(v_i \otimes 1 \in V \otimes_R F \mid i\in I)\cap (V \otimes_R R)=:\vspan(v_i \mid i\in I)_{\sat}, 
\]
and we denote by $\bar v_j$ for $j\notin I$ the image of $v_j$ in the quotient $V/ W$. 

There is an obvious action of $\GL(V)$ on $\Sh(V)$.
We define the space of coinvariants 
\[
\shB_\RR:= \bigoplus_{n=0}^\infty \Sh(\RR^n)_{\GL(n,\RR)}.
\]
The product and coproduct above descend to a product and coproduct on $\shB_\RR$.
\begin{thm}[Ash--Miller--Patzt \cite{AMP}]
\label{thm:AMP}
Let $\RR$ be a PID.
Then we have that 
\begin{equation}\label{equ:AMP}
H_{n+k}(\Sh(\RR^n)_{\GL(n,\RR)}) \cong H_k(\GL(n,\RR); \St_n(F)\otimes \Q)
\end{equation}
with $\St_n(F)$ the Steinberg module of $\GL(n,F)$.
Furthermore, the product and coproduct above endow $\shB_\RR$ with the structure of a differential graded commutative Hopf algebra.
\end{thm}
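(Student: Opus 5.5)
The first claim \eqref{equ:AMP} follows from the Lee--Szczarba resolution together with a finiteness and flatness argument. By Lee--Szczarba, $H(\Sh(\RR^n))$ is $\St_n(F)\otimes\Q$, concentrated in degree $n$. I would then show that each $\Sh_N(\RR^n)$ is a flat (indeed projective) $\Q[\GL(n,\RR)]$-module. Let $[v_1,\dots,v_N]$ be a generator with spanning columns. Its stabilizer in $\GL(n,\RR)$ permutes the lines $\pm v_j$, and an element fixing all of them is determined by its action on a spanning set. So the stabilizer embeds in the finite group $(\Z/2)^N\rtimes S_N$. Each $\Sh_N(\RR^n)$ is therefore a direct sum of modules of the form $\Q[\GL]\otimes_{\Q[\mathrm{Stab}]}\chi$ with $\chi$ a sign character of a finite group. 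Such modules are projective over $\Q$ (Maschke).

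Hence $\Sh(\RR^n)[-n]$ is a projective resolution of $\St_n(F)\otimes\Q$, and taking coinvariants computes $H_k(\GL(n,\RR);\St_n(F)\otimes\Q)$ in degree $n+k$. One has to check that the relations, in particular the sign relation, only produce summands that are zero or of this induced-character form. I would do this orbit by orbit: a generator sent to minus itself by its stabilizer simply vanishes.

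For the Hopf structure, I would first check that the product \eqref{equ:sh prod} and coproduct \eqref{equ:sh coprod} are well defined on the quotient by the four relations. This is immediate for the product, since the span of a concatenation spans $V\oplus W$ exactly when each factor spans. For the coproduct, the left factor spans $W$ by construction, and the images $\bar v_j$ span $V/W$ whenever the $v_j$ span $V$. Both maps are equivariant: for the product along $\GL(V)\times\GL(W)\to\GL(V\oplus W)$, and for the coproduct via the parabolic subgroup stabilizing $W$. They therefore descend to coinvariants. For the coproduct, I sum over saturated $W$ and identify $\Sh(W)_{\GL(W)}\otimes\Sh(V/W)_{\GL(V/W)}$ with $\Sh(\RR^m)_{\GL}\otimes\Sh(\RR^{n-m})_{\GL}$ by choosing bases. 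This is the step where the PID hypothesis enters: saturated submodules are direct summands, so $V\cong W\oplus V/W$. One must then check independence of the choice of splitting and basis; this holds because changes of choice act through $\GL(W)\times\GL(V/W)$ and the unipotent radical, which acts trivially on the image terms.

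I would then verify the following, in order:
\begin{enumerate}[(a)]
\item $d$ is a derivation of the product, which is clear from \eqref{equ:sh diff} together with Koszul signs;
\item $d$ is a coderivation. Deleting $v_j$ either deletes it from the left or right tensor factor. The delicate case is when removing $v_j$ changes the saturated span $W$. Then $v_j$ is a coloop of the left factor, where that term is zero, and on the right side $\bar v_j = 0$, which vanishes by nullity. This is exactly why \eqref{equ:d simplified} is the right formula;
\item coassociativity, via iterated unshuffles and flags $W_1\subset W_2$;
\item graded commutativity, from antisymmetry after a block permutation, using that $V\oplus W\cong W\oplus V$ by an element of $\GL$, which is invisible in coinvariants;
\item compatibility $\Delta(xy)=\Delta(x)\Delta(y)$.
\end{enumerate}

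Item (e) is the main obstacle. A subset of $v$'s and $w$'s spans a saturated submodule $U\subset V\oplus W$ that need not be of the form $U_1\oplus U_2$. A single term of $\Delta(xy)$ therefore lives in a summand that is not split. I would show that in coinvariants such terms either vanish or match the product terms. The key point is that if $I=I_1\sqcup I_2$, then the span of the chosen $v_{I_1}$ and $w_{I_2}$ is automatically $\vspan(v_{I_1})_\sat\oplus\vspan(w_{I_2})_\sat$, since it is a span of vectors from the two summands. So the apparent difficulty disappears. What remains is the identification of quotients, $(V\oplus W)/(U_1\oplus U_2)\cong V/U_1\oplus W/U_2$, together with a check of the unshuffle signs. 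Connectedness of the grading (rank $0$ part $\Q[\,]$) then provides the antipode automatically.
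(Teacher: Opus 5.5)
Your proposal is correct and follows essentially the route the paper relies on when it cites Ash--Miller--Patzt: Lee--Szczarba's computation together with finite stabilizers makes $\Sh(\RR^n)$ a flat resolution of $\St_n(F)\otimes\Q$, and the bialgebra axioms are then checked directly on sharblies; the paper gives no proof beyond that citation. One small fix in (b): for a deleted $v_j$ and a chosen left subset $I'$, the two vanishing mechanisms occur in complementary cases, not simultaneously. If $v_j\in\vspan_F(v_{I'})$, the corresponding term of $\Delta dx$ is matched by deleting $v_j$ from the left factor of the $(I'\cup\{j\})$-term, and the right-hand candidate dies because $\bar v_j=0$. If $v_j\notin\vspan_F(v_{I'})$, the left-hand candidate is a coloop deletion, and the term is matched by deleting $\bar v_j\neq 0$ from the right factor of the $I'$-term.
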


In the following, we will usually set $\RR=\Z$, and then omit the subscript,
\[
\shB := \shB_{\Z}.
\]

\subsection{The explicit map to the Steinberg module}
It will be important to know the (or one) explicit form of the isomorphism \eqref{equ:AMP}. To write it down we will use the augmented reduced normalized chain complex $\hat N\T_\bullet(V)$ of Section \ref{sec:red aug}.
Then there is an explicit map of dg vector spaces 
\begin{equation}\label{equ:chi}
\chi\colon \Sh(\RR^n) \longrightarrow \hat N \T_\bullet(F^n) [-2+\delta_{n,0}]
\end{equation}
from the sharbly complex into the normalized (rational) chain complex of the Tits building on $F^n$.
This map $\chi$ sends a non-zero sharbly $[v_1,\dots,v_k]\in \Sh(\RR^n)$ to
\begin{equation}\label{equ:chi def}
\chi([v_1,\dots,v_k]) = 
\begin{cases}
    \emptyset & \text{if $n=k=0$ or ($n=k=1$ and $v_1\neq 0$)} \\
    0 & \text{if $k\neq n$ or any $v_j=0$} \\
    \sum_{\sigma\in S_n}(-1)^\sigma 
C_\sigma &\text{if $k=n$ and all $v_j\neq 0$}
\end{cases},
\end{equation}
where $C_\sigma$ is the chain of subspaces
\[
\vspan_F(v_{\sigma(1)})\subset \vspan_F(v_{\sigma(1)},v_{\sigma(2)})\subset \cdots \subset\vspan_F(v_{\sigma(1)},\dots,v_{\sigma(n-1)}).
\]
Note that all subspaces are proper since the $v_j\neq 0$ and since we use only $n-1$ of the $n$ vectors in the chain. For a non-zero sharbly all subspace inclusions are strict since $v_1, \dots, v_n$ span $F^n$. Hence $C_\sigma$ is a normalized chain.
In the special case $n=1$ the sharbly $[v]$ is sent to the empty chain of subspaces $\emptyset$. In the special case $n=0$ the only non-trivial sharbly $[\,]$ is sent to the empty chain $\emptyset$ as well by convention. Note that if $v_1,\dots, v_n$ is an $F$-basis, $\chi([v_1,\dots,v_n])$ is the corresponding apartment.

The following lemma is well known and only included for completeness.
\begin{lemma}\label{lem:AMP chi}
    The morphism $\chi$ is a quasi-isomorphism of dg vector spaces for each $n\in \mathbb N_0$. It is furthermore $\GL(n,\RR)$-equivariant.
\end{lemma}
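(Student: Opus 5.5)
The plan has three steps: check that $\chi$ is a well-defined chain map, note that it is equivariant, and then prove it is a quasi-isomorphism by combining the Lee--Szczarba theorem with an explicit computation on a single cycle.

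\emph{Well-definedness.} We check that $\chi$ respects the relations defining $\Sh(\RR^n)$.
\begin{itemize}
\item Antisymmetry: permuting the $v_j$ by $\tau$ reindexes the sum over $\sigma$ and produces the sign $(-1)^\tau$.
\item Sign: replacing $v_j$ by $-v_j$ leaves all the spans unchanged.
\item Nullity: handled directly by the definition of $\chi$.
\item Span: if $k=n$ and the $v_j$ do not span $F^n$, some chain $C_\sigma$ has a repeated subspace. That chain is degenerate, hence zero in the normalized complex. In fact the terms pair off, since swapping the two indices that create the repetition reverses the sign.
\end{itemize}

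\emph{Chain map.} We compare $\partial\chi$ with $\chi d$. The only nonzero values of $\chi$ occur for $k=n$. For $k=n+1$ we need $\partial\chi(\cdot)=0=\chi(d\,\cdot)$, and for $k=n$ we need $\partial\chi(x)=\chi(dx)=0$, because $dx$ has length $n-1<n$, so $\chi(dx)=0$.

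So the real check is that $\sum_\sigma(-1)^\sigma C_\sigma$ is a cycle. This holds because it is the image of the fundamental class of the boundary of the permutohedron, or more simply of the barycentric subdivision of $\partial\Delta^{n-1}$. In each $C_\sigma$, omitting the $i$-th subspace gives the same face as the corresponding term for $\sigma\circ(i\,i{+}1)$, which carries the opposite sign, so the boundary terms cancel. The reduced augmentation to $\emptyset$ is handled the same way when $n=2$. The cases $n=0,1$ are immediate from the convention $\chi=\emptyset$.

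Equivariance is clear: for $g\in\GL(n,\RR)$ we have $\vspan_F(gv_{\sigma(1)},\dots)=g\,\vspan_F(v_{\sigma(1)},\dots)$.

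\emph{Quasi-isomorphism.} By Lee--Szczarba, $H(\Sh(\RR^n))\cong\St_n(F)\otimes\Q$ is concentrated in degree $n$. By Solomon--Tits, $H(\hat N\T_\bullet(F^n))$ is $\St_n\otimes\Q$ concentrated in degree $n-2$. After the shift, both homologies sit in the same single degree, so it suffices to show that $\chi$ induces an isomorphism $H_n(\Sh(\RR^n))\to\St_n(F)\otimes\Q$.

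\begin{itemize}
\item Surjectivity: by Solomon--Tits, $\St_n$ is generated by apartment classes. The apartment of an $F$-basis can be taken to come from an $\RR$-basis after clearing denominators. For a basis, $[v_1,\dots,v_n]$ is a cycle, since removing any vector leaves a non-spanning set, and $\chi$ sends it to that apartment.
\item Injectivity: this is the main obstacle. I would use the Lee--Szczarba description of $H_n(\Sh)$. Their isomorphism $\St\to H_n(\Sh)$ is the inverse of the apartment map, in the sense that its composite with $\chi$ is the identity on apartment generators, hence on $\St$. This forces $\chi_*$ to be injective as well.
\item Alternative for injectivity: $H_n(\Sh(\RR^n))$ is a quotient of the space of $n$-cycles, and cycles in top length are spanned by bases modulo relations. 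Hence $H_n$ is generated by the classes $[v_1,\dots,v_n]$ with $(v_j)$ a basis. One can then show that the relations among these classes match the Ash--Rudolph/Bykovskii-type presentation of $\St$; the universal-coefficient rationalization is harmless.
\end{itemize}

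In either version, $\chi_*$ is a surjection between isomorphic $\Q$-vector spaces which is shown to be an isomorphism via the Lee--Szczarba identification. This completes the quasi-isomorphism claim.
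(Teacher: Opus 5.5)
Your three-step outline is the paper's, but the chain-map step has a genuine hole. For $x=[v_1,\dots,v_{n+1}]$ you correctly note that one needs $\chi(dx)=0$. You then declare that ``the real check'' is only that $\sum_\sigma(-1)^\sigma C_\sigma$ is a cycle, and never verify $\chi(dx)=0$.

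This is not automatic. The expression $\chi(dx)=\sum_{j}(-1)^{j-1}\chi([v_1,\dots,\hat v_j,\dots,v_{n+1}])$ is an alternating sum of generally nonzero apartment chains in degree $n-2$ of $\hat N\T_\bullet(F^n)$. That is the top degree, where there are no boundaries. Its vanishing is exactly the cocycle relation among apartment classes, i.e.\ the statement that $\chi$ descends to $H_n(\Sh(\RR^n))=\Sh_n(\RR^n)/d\,\Sh_{n+1}(\RR^n)$. It does not follow from the cycle property.

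The fix is the same kind of pairing you used elsewhere. Reindex the double sum by $\sigma\in S_{n+1}$ with $\sigma(n+1)=j$. This gives $\chi(dx)=\pm\sum_{\sigma\in S_{n+1}}(-1)^\sigma(V_{\sigma,1}\subset\cdots\subset V_{\sigma,n-1})$, where $V_{\sigma,i}=\vspan_F(v_{\sigma(1)},\dots,v_{\sigma(i)})$. Each chain depends only on $\sigma(1),\dots,\sigma(n-1)$, so the involution $\sigma\mapsto\sigma\circ(n\ n{+}1)$ cancels the terms in pairs. This is the second half of the paper's chain-map check.

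There are two smaller points. First, in the span relation it does not suffice that some $C_\sigma$ is degenerate. If $v_1,\dots,v_{n-1}$ are independent and $v_n=\sum_{i\le k}a_iv_i$ with $a_k\neq0$, then $C_{\mathrm{id}}$ is non-degenerate. You must exhibit its sign-reversing partner explicitly. The transposition $(k\,n)$ does this, and it is an involution on the non-degenerate terms. By contrast, swapping the last two indices yields a degenerate chain when $k<n-1$.

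Second, your injectivity argument stands or falls with the claim that the Lee--Szczarba isomorphism is inverse to the apartment map, which you assert without a precise reference. Your ``alternative'' via a Bykovskii-type presentation is not an argument as written. That these relations present $\St_n(F)\otimes\Q$ is equivalent to exactness of $\Sh_{n+1}\to\Sh_n\to\St_n(F)\otimes\Q\to 0$, i.e.\ to the very injectivity you want, so it must be cited rather than ``shown''. The paper settles this by observing that $\chi$ is the map $\phi_{\Sh}$ of Ash--Gunnells--McConnell, which their Theorem~5 shows induces an isomorphism. You need a citation of this exact form, because the two homologies are infinite-dimensional and a surjection between them need not be injective.
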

\begin{proof}
    We first check well-definedness, i.e., that $\chi$ respects the sharbly relations.
    The nullity relation \eqref{equ:sh null rel} is respected by the definition of $\chi$, and the relations \eqref{equ:sh perm rel} and \eqref{equ:sh sign rel} are obviously respected as well.
    It remains to check that \eqref{equ:sh span rel} is respected. 
    So suppose that the non-zero vectors $v_1,\dots,v_n$ do not span $F^n$.
    All degenerate terms are zero in the normalized complex. We claim that the non-degenerate terms in \eqref{equ:chi def} cancel in pairs. That is, to every $\sigma\in S_n$ such that $C_\sigma$ is non-degenerate there is a unique $\sigma'\in S_n$ with $\sigma'\neq \sigma$ such that $C_\sigma=C_{\sigma'}$, and that $\sigma'$ is of opposite parity.
    We may assume $\sigma=\mathrm{id}$ for notational simplicity.
    Then $v_1,\dots,v_{n-1}$ are linearly independent by the non-degeneracy of $C_\sigma$ and $v_{n}=\sum_{j=1}^{k}a_j v_j$ is a linear combination of the first say $k$ (for some $1\leq k\leq n-1$) vectors with $a_k\neq 0$. Then $\sigma'=(kn)$ (the transposition of $k$ and $n$) is the unique permutation $\sigma'$ such that $C_\sigma=C_{\sigma'}$. Hence $\chi([v_1,\dots,v_n])=0$ and $\chi$ is well-defined.
    %But if the non-zero vectors $v_1,\dots,v_n$ do not span $F^n$, then $\chi([v_1,\dots,v_n])$ is a linear combination of degenerate  degenerate and hence zero in the normalized complex.

    We next check that $\chi$ respects the differentials.
    Since $\chi(x)$ is only nonzero for sharblies $x\in \Sh(\RR^n)$ of length $n$, we have to check the compatibility equation
    \[
    \chi(dx) = d\chi(x)
    \]
    only for $x$ of length $n$ or $n+1$, and in either case one of the sides of the equation vanishes trivially.
    First, let $x=[v_1,\dots,v_n]$ be of length $n$, so that $dx=0$.
    Abbreviate $V_{\sigma,j}=\vspan(v_{\sigma(1)},\dots,v_{\sigma(j)})$.
    Then 
    \[d\chi(x) = \sum_{j=1}^{n-1} (-1)^{j-1} \sum_{\sigma\in S_n}
    (-1)^\sigma 
    (V_{\sigma,1} \subset \cdots \subset V_{\sigma,j-1}\subset \widehat{V_{\sigma,j}}
    \subset V_{\sigma,j+1}\subset \cdots \subset V_{\sigma,n-1}).\]
    We note that the terms on the right-hand side appear in pairs with opposite sign: If we interchange $\sigma(j)$ and $\sigma(j+1)$ the chain on the right-hand side stays the same, but the sign changes.
    Hence $d\chi(x)=0$ as desired.

    Next suppose that $x=[v_1,\dots,v_{n+1}]$ has length $n+1$.
    Then we can rewrite
    \[
    \chi(dx) = \pm 
    \sum_{\sigma\in S_{n+1}} (-1)^\sigma 
    (V_{\sigma,1} \subset \cdots \subset V_{\sigma,n-1}).
    \]
    Swapping $\sigma(n)$ and $\sigma(n+1)$ leaves the expression on the right-hand side the same while flipping the sign $(-1)^\sigma$, so again all terms cancel in pairs and $\chi(dx) =0$ as desired.

It remains to check that the map $\chi$ is a quasi-isomorphism.
For the degenerate cases $n=0,1$ this is done by an easy hand computation, so we focus on the case $n\geq 2$.
Then we already know that both sides of \eqref{equ:chi} have isomorphic homology, concentrated in degree $n$. 
The map $\chi$ then sends the cycle $[v_1,\dots,v_n]$ to the (sum of nondegenerate top dimensional simplices of) the apartment corresponding to the basis.
This is the same map as (the $\phi_{\Sh}$) used in \cite[Theorem 5]{AGMC}, and hence it is an isomorphism on homology by that theorem.
    %\todo[inline]{Find some better reference???}
    Finally, the $\GL(n,\RR)$-equivariance of $\chi$ is evident.
\end{proof}

\section{Lie algebraic preliminaries}
All Lie algebras, Hopf algebras and algebraic objects in this section will be understood over the ground field $\Q$.
\subsection{Maurer--Cartan elements and twisting}
\label{sec:twisting}
A differential graded Lie algebra is a differential graded vector space $(\fg,d)$ that in addition carries a graded Lie algebra structure with bracket $[-,-]$ such that $d[x,y]=[dx,y]+(-1)^{|x|}[x,dy]$ for all homogeneous $x,y\in \fg$.
For the purposes of this section, let us work in homological conventions so that $d$ reduces the homological degree by one.
Then a \emph{Maurer--Cartan element} is an element $\alpha\in \fg$ of degree $-1$ that satisfies the Maurer--Cartan equation 
\[
d\alpha +\frac 12 [\alpha,\alpha] =0.
\]
Given a Maurer--Cartan element $\alpha$, we may define the \emph{twisted} (by $\alpha$) dg Lie algebra $\fg^\alpha$ to be the same graded Lie algebra as $\fg$, but equipped with the twisted differential 
\[
d_\alpha := d + [\alpha,-].
\]
Note that the Maurer--Cartan equation guarantees that $d_\alpha^2=0$. 

Likewise, if $M$ is a $\fg$-module and $\alpha\in \fg$ is a Maurer--Cartan element, then we may also twist the module $M$ by $\alpha$, to obtain a $\fg^\alpha$-module $M^\alpha$.
Concretely, $M^\alpha$ is the same graded module as $M$, with the same $\fg$-action, but with differential 
\[
d_M^\alpha = d_M + \alpha\cdot,
\]
where $d_M$ is the original differential on $M$ and $\alpha\cdot$ is the action of $\alpha$ on $M$.
Again, the Maurer--Cartan equation guarantees that $(d_M^\alpha)^2=0$.

Generally, if our module $M$ happens to be equipped with some additional algebraic structure that is respected by the $\fg$-action, then the twisted module  $M^\alpha$ is still equipped with the same algebraic structure, which is respected by the $\fg^\alpha$-action.

For our purposes, this will happen in the following situation: Assume that $\fg=\fc^*$ is the dual of a dg Lie coalgebra $\fc$. Then in particular $\fg$ acts on $\fc$ compatibly with (i.e., by coderivations of) the dg Lie coalgebra structure on $\fc$. Concretely, for $x \in \fg$ and $\xi \in \fc$, the action is
\[ x \cdot \xi = -(x \otimes \id) \delta(\xi).\]
This action extends to the universal enveloping coalgebra $U^c\fc$ by
\[ x \cdot \xi = -(x \otimes \id) \Delta(\xi) + %(-1)^{|x|(|\xi|+1)}
(\id \otimes x)\Delta(\xi)\]
for homogeneous elements $x\in \fg$ and $\xi \in U^c\fc$.\footnote{Here we use the usual Koszul sign conventions, namely 
\[
(\id \otimes x) (\xi' \otimes \xi'')
:=
(-1)^{|x| |\xi'|}\xi' x( \xi'').
\]}
Hence, given a Maurer--Cartan element $\alpha\in \fg$, the twisted object $\fc^\alpha$ is still a dg Lie coalgebra. 
Likewise, the universal enveloping coalgebra $U^c\fc$ may be twisted by a Maurer--Cartan element $\alpha\in \fg$ and $(U^c\fc)^\alpha=U^c\fc^\alpha$ is still a dg Hopf algebra.

We also remark that the twisting construction is functorial. Concretely, assume that $\fg$ and $\fh$ are dg Lie algebras and $\phi\colon\fg\to \fh$ is a morphism of dg Lie algebras. Then if $\alpha\in \fg$ is a Maurer--Cartan element, $\phi(\alpha)$ is a Maurer--Cartan element in $\fh$, and the same linear map $\phi$ defines a morphism of dg Lie algebras
\[
\phi \colon \fg^\alpha\longrightarrow \fh^{\phi(\alpha)} 
\]
between the twisted objects.
Furthermore, let $M$ be a $\fg$-module, $N$ be an $\fh$-module and let $f\colon M\to N$ be module morphism, that is, a morphism of dg vector spaces such that $f(x\cdot m)=\phi(x)\cdot f(m)$ for all $x\in \fg$, $m\in M$.
Then the same map $f$ yields a morphism between the twisted modules 
\[
f\colon M^\alpha \longrightarrow N^{\phi(\alpha)}.
\]

\subsection{Twisting in Hopf algebras}
\label{sec:Hopf twisting}
Next let $A$ be a dg Hopf algebra. We can associate two dg Lie algebras to $A$. First, we say that an element $x\in A$ is primitive if its coproduct $\Delta x$ satisfies 
\[
\Delta x=x\otimes 1 + 1 \otimes x.
\]
The primitive elements $P(A)\subset A$ then form a dg Lie algebra.
Second, let $I=\ker\epsilon \subset A$ be the augmentation ideal, i.e., the kernel of the counit $\epsilon$. Then the indecomposables of $A$ are $Q(A):=I/I^2$.
They form a dg Lie coalgebra. Their dual space 
\begin{equation}\label{equ:fgA def}
\fg_A:=(Q(A))^*
\end{equation}
is hence a dg Lie algebra. Concretely, elements of $\fg_A$ are the same as linear maps $\lambda\colon A\to \Q$ such that $\lambda(1) = 0$ and 
\begin{equation}\label{equ:coprimitive}
\lambda(xy)=\lambda(x)\epsilon(y) + (-1)^{|x||\lambda|}\epsilon(x) \lambda(y)
\end{equation}
for all homogeneous $x,y\in A$.
Both the dg Lie algebra of primitives $P(A)$ and the dual indecomposables $\fg_A$ act on $A$ compatibly with the dg Hopf algebra structure, i.e., by biderivations. 

This means that given a Maurer--Cartan element in either dg Lie algebra we can twist the Hopf algebra $A$ as described in the previous subsection.

We focus on the case of a Maurer--Cartan element $\alpha\in \fg_A$, as this is the relevant case below.
The twisted Hopf algebra $A^\alpha$ is then the same underlying graded Hopf algebra $A$, but with the differential altered by adding the action of $\alpha$. This action is, on $x \in A$,
\begin{equation}\label{equ:lie on hopf}
\alpha\cdot x = -\sum (\alpha(x') x'' -(-1)^{|x'||x''|}\alpha(x'')x'),
\end{equation}
using Sweedler notation for the coproduct 
\[
\Delta\colon x\longmapsto \sum x' \otimes x''. 
\]

In the situation relevant to this paper, we will furthermore have a morphism of dg Hopf algebras $\Phi\colon B\to A$, and a Maurer--Cartan element $\alpha\in \fg_{A}$. Then via the dual morphism $\Phi^*\colon \fg_A\to \fg_B$ we obtain a Maurer--Cartan element $\Phi^*(\alpha)\in \fg_B$. Furthermore, by twisting we obtain a (the same) morphism of dg Hopf algebras
\[
\Phi\colon B^{\Phi^*(\alpha)} \longrightarrow A^{\alpha}.
\]

\begin{rem}
Under additional finiteness conditions the object $A^*$ is again a Hopf algebra and $\fg_A$ are the primitives of $A^*$. Since our chain models (like $\shB$ above) are often not of finite type, we prefer to directly work with $\fg_A$ rather than $A^*$.
\end{rem}

\subsection{Pre-Lie algebras and their universal enveloping algebras}
\label{sec:pre Lie}
A (dg) pre-Lie algebra is a dg vector space $\fg$ together with a morphism of dg vector spaces
\[
\circ \colon \fg\otimes \fg\longrightarrow \fg,
\]
such that the associator $(x\circ y) \circ z- x\circ(y\circ z)$ is graded symmetric in the last two arguments $y$ and $z$.
Let $(\fg,\circ)$ be a pre-Lie algebra. Then $\fg$ is also a Lie algebra with the Lie bracket 
\[
[x,y]= x\circ y - (-1)^{|x||y|}y\circ x.
\]
It is well known (see e.g. Oudom--Guin
\cite{OG}) that the universal enveloping algebra of the Lie algebra $(\fg, [-,-])$ has a very nice explicit form, using the underlying pre-Lie structure. We recall this here.
First, we need the operations (with $S^k(-)$ the $k$-fold (graded) symmetric product)
\[
\circ_{k} \colon \fg \otimes S^k\fg \longrightarrow \fg
\]
defined recursively such that $x\circ_1 y=x\circ y$ and 
\begin{multline*}
x\circ_k (y_1,\dots,y_k) = 
(x\circ_{k-1} (y_1,\dots,y_{k-1}) ) \circ y_k 
\\-
\sum_{j=1}^{k-1} (-1)^{|y_k|(|y_{j+1}|+\cdots + |y_{k-1}|)}
x\circ_{k-1} (y_1,\dots,y_j\circ y_k , \dots,y_{k-1}). 
\end{multline*}

%\begin{ex}
%    Our running example will be $(\fg,\circ)=(\fGCc_0,\bullet)$.
%    In this case 
%    \[
%\gamma \circ_k (\nu_1,\dots,\nu_k) 
%    \]
%    is obtained by simultaneously inserting the graphs $\nu_1,\dots,\nu_k$ into $k$ distinct vertices of $\gamma$.
%\end{ex}

It is convenient to drop the subscript $k$ and just write 
\[
x\circ (y_1,\dots,y_k) := x\circ_k (y_1,\dots,y_k).
\]
Next, consider the symmetric coalgebra $S\fg$. 
For $x_1,\dots,x_p\in \fg$ we denote by $x_1\cdots x_p\in S\fg$ the corresponding monomial. In other words, juxtaposition denotes the commutative product on the Hopf algebra $S\fg$.

Then, we define in $S\fg$ another (in general non-commutative) product $*$ such that 
\[
(x_1\cdots x_p) * (y_1\cdots y_q)
=
\sum_{f\colon\{1,\dots,q\} \to \{0,\dots,p\}}\pm 
y_{f^{-1}(0)} (x_1\circ y_{f^{-1}(1)}) \cdots (x_p\circ y_{f^{-1}(p)}),
\]
where we use the following notation for a finite set $I=\{i_1,\dots,i_j\}$:
\[
y_I := y_{i_1}\cdots y_{i_j} \in S\fg.
\]
By convention we declare that $y_\emptyset =1$ and $x_j\circ 1=x_j$.

\begin{prop}[{\cite[Theorem 2.12]{OG}}]
\label{prop:U pre Lie}
    Together with the cocommutative coproduct on $S\fg$ the operation $*$ defines a Hopf algebra structure on $S\fg$, and one has an isomorphism of Hopf algebras 
    \[
    U\fg \cong (S\fg,\Delta,*)
    \]
    with the universal enveloping algebra of $(\fg,[-,-])$. 
\end{prop}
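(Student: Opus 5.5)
The plan is to follow Oudom--Guin and use the universal property of $U\fg$. Since $\fg$ generates the claimed structure, we compare $(S\fg,\Delta,*)$ with $U\fg$ via the symmetrization (PBW) map.

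\emph{Step 1: associativity and unit of $*$.} First extend the pre-Lie product to an operation $S\fg\otimes S\fg\to S\fg$, written $A\circ B$. Set $1\circ B=\epsilon(B)$. For $x\in\fg$ let $x\circ B$ be given by the $\circ_k$, and define
\[
(xA)\circ B=\sum \pm (x\circ B_{(1)})(A\circ B_{(2)}),
\]
using the coproduct of $B$. The recursive definition of $\circ_k$ is exactly designed to give
\[
x\circ(B\,y) = (x\circ B)\circ y - x\circ(B\circ y)
\]
for $y\in\fg$. Using the pre-Lie identity (symmetry of the associator in the last two slots), one shows by induction on word length that $\circ_k$ is graded symmetric in $y_1,\dots,y_k$, so it is well defined on $S^k\fg$. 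The same induction gives the key identity
\[
A\circ(B\circ C)=(A\circ B)\circ C - \ldots,
\]
in the closed form $(A\circ B)\circ C=A\circ (B*C)$ with $B*C=\sum \pm B_{(1)}(B_{(2)}\circ C)$. The formula in the statement for $*$ is just this, unpacked. Associativity of $*$ then follows formally from that identity together with multiplicativity of $\circ$ in its first argument. The unit is $1$.

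\emph{Step 2: compatibility with $\Delta$.} Check that $\Delta(A*B)=\Delta(A)*\Delta(B)$. This holds because $\Delta$ is a morphism of commutative algebras, and each $x\circ B$ with $x\in\fg$ is primitive. The counit is immediate. The antipode exists because $S\fg$ is connected graded by word length and the coalgebra is conilpotent. The differential is compatible since $d$ is a derivation of $\circ$. This gives a Hopf algebra.

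\emph{Step 3: identification with $U\fg$.} For $x,y\in\fg$ we have $x*y=xy+x\circ y$, hence
\[
x*y-(-1)^{|x||y|}y*x=[x,y].
\]
So the inclusion $\fg\to(S\fg,*)$ extends to a Hopf algebra map $U\fg\to S\fg$. On associated gradeds for the word-length filtration, $x_1*\cdots*x_p=x_1\cdots x_p+(\text{shorter})$, so the graded map is the identity $S\fg\to S\fg$. PBW then makes it an isomorphism.

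I expect the main obstacle to be Step 1: proving well-definedness (symmetry of $\circ_k$) and the identity $(A\circ B)\circ C=A\circ(B*C)$ with correct Koszul signs. I would first prove it for $A=x\in\fg$ and $C=y\in\fg$ by induction on the length of $B$, then extend to general $C$ by induction, and finally to general $A$ using multiplicativity.
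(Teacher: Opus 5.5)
Your outline follows the Oudom--Guin argument, which is all the paper relies on: it cites \cite[Theorem 2.12]{OG} and gives no proof of its own. However, the closed formula at the heart of Step 1 uses the wrong convention for the setup you fixed, and as written that step fails. You set $1\circ B=\epsilon(B)$, $x\circ 1=x$ and $(xA)\circ B=\sum\pm(x\circ B_{(1)})(A\circ B_{(2)})$. With these rules, your product $B*C=\sum\pm B_{(1)}(B_{(2)}\circ C)$ gives $x*y=x\,\epsilon(y)+x\circ y=x\circ y$ for $x,y\in\fg$, and it gives $1*C=\epsilon(C)$. So this product is not unital, it is not the product in the statement, and it contradicts the identity $x*y=xy+x\circ y$ that you use in Step 3. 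Worse, the key identity $(A\circ B)\circ C=A\circ(B*C)$ then fails already for $A,B,C\in\fg$. It would assert $(a\circ b)\circ c=a\circ(b\circ c)$, i.e.\ that the associator vanishes, so the induction you sketch cannot go through. Putting the coproduct on the first argument is the left pre-Lie (Grossman--Larson) convention, where one extends by $1\triangleright C=C$ and $A\triangleright 1=\epsilon(A)$. It is not the right-symmetric convention of the paper.

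In the right-symmetric setting the correct definition puts the coproduct on the second argument:
\[
B*C=\sum\pm (B\circ C_{(1)})\,C_{(2)} .
\]
This is what literally unpacks to the formula in the statement. Each element of $C_{(1)}$ is inserted into exactly one factor $x_i$ of $B$, and $C_{(2)}=y_{f^{-1}(0)}$ is left untouched. With this formula you get $B*y=By+B\circ y$. Your remark that the recursion for $\circ_k$ encodes $x\circ(By)=(x\circ B)\circ y-x\circ(B\circ y)$ is then precisely the key identity for $A=x$, $C=y$, with no induction needed. Two smaller points also need fixing.
\begin{itemize}
\item Associativity does not follow from the key identity and multiplicativity alone. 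To expand $A*(B*C)$ you also need $\Delta(B\circ C)=\sum\pm(B_{(1)}\circ C_{(1)})\otimes(B_{(2)}\circ C_{(2)})$, which is where primitivity of $x\circ B$ enters, together with cocommutativity. So that part of Step 2 must come before associativity.
\item $(S\fg,*)$ is only filtered, not graded, by word length. The antipode comes from conilpotence of the coalgebra $(S\fg,\Delta)$, not from a word-length grading.
\end{itemize}
With these corrections, Step 3 (the universal property plus PBW on the word-length filtration) is fine, and the argument becomes the standard Oudom--Guin proof.
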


%\begin{rem}
%    Let $\alpha\in \fg$ be a Maurer--Cartan element in the dg Lie algebra $\fg$. Then the twisted dg Lie algebra $\fg^\alpha$ is the same graded Lie algebra but with differential $d+[\alpha,-]$. We have that twisting commutes with taking the universal enveloping algebra, i.e., 
%    \[
%    U(\fg^\alpha) = (U\fg)^\alpha.
%    \]
%\end{rem}

\subsection{Pre-Lie coalgebras and their universal enveloping coalgebras}
\label{sec:pre Lie co}
Proposition \ref{prop:U pre Lie} readily dualizes to pre-Lie coalgebras. A (graded) pre-Lie coalgebra is a (graded) vector space $\fc$ equipped with a linear map of vector spaces
\[
\circ^* \colon \fc \longrightarrow \fc \otimes \fc,
\]
the pre-Lie coproduct, whose coassociator is graded symmetric in the last two positions. A pre-Lie coalgebra is also a (graded) Lie coalgebra with the Lie cobracket (the cocommutator) $\Delta_{[,]} = \circ^* - \tau \circ^*$, where $\tau\colon\fc\otimes \fc \to \fc\otimes \fc$ swaps the position of the factors, with the standard Koszul signs.
We say that a pre-Lie coalgebra is conilpotent if for any $x\in \fc$ there is an $n$ such that every $n$-th application of $\circ^*$ to $x$ is zero. In the following we shall always assume that $\fc$ is conilpotent.

Dually to before, we can associate to a pre-Lie coalgebra the operations 
\[
\circ_k^* \colon \fc \longrightarrow \fc \otimes S^k\fc.
\]
Note that we shall use as the target the symmetric coinvariants $S^k\fc= (\fc^{\otimes k})_{S_k}$, while strictly speaking the dual to $\circ_k$ of the last section would take values in the invariants $(\fc^{\otimes k})^{S_k}$ instead. However, since we are working in characteristic zero, both spaces are isomorphic, with the isomorphism being
\begin{align*}
(\fc^{\otimes k})_{S_k} &\longrightarrow (\fc^{\otimes k})^{S_k} \\
x_1\cdots x_k &\longmapsto \sum_{\sigma\in S_k} \pm 
x_{\sigma(1)} \otimes \cdots \otimes x_{\sigma(k)}
.
\end{align*}

Extend $\circ^*$ to an operation 
\begin{align*}
    \hat \circ^* \colon S^{k-1} \fc &\longrightarrow S^{k} \fc \\
    \hat \circ^* (x_1\cdots x_{k-1}) &= 
    \sum_{j=1}^{k-1} 
    x_1\cdots \mu(\circ^* x_j) \cdots x_{k-1},
\end{align*}
where $\mu\colon \fc \otimes \fc \to S^2\fc$ is the multiplication.
We then define $\circ_k^*$ recursively such that $\circ_1^*=\circ^*$ and
%\todo[inline]{Rewrite}
\begin{equation}\label{equ:circ star}
\circ_k^* = \frac 1k 
\left( (\id_\fc \otimes \mu)((\circ_{k-1}^*) \otimes \id_{\fc}) \circ^*
-
(\id_\fc \otimes \hat \circ^*)  \circ_{k-1}^* \right),
\end{equation}
%\begin{equation}\label{equ:circ star}
%\circ_k^* = \frac 1k 
%\left( (\id_\fc \otimes \mu)((\circ_{k-1}^*) \otimes \id_{\fc}) \circ^*
%-
%\sum_{j=1}^{k-1} 
%(\id_{\fc^{\otimes j}}\otimes \circ^* \otimes \id_{\fc^{\otimes k-1-j}})  \circ_{k-1}^* \right),
%\end{equation}
with
\[
\mu \colon S^{k-1} \fc \otimes \fc \longrightarrow S^{k} \fc  
\]
again the commutative product in the symmetric algebra.
Then we define a coassociative coproduct 
\[\Delta\colon S\fc\longrightarrow S\fc\otimes S\fc\]
on $S\fc$ by the formula 
\begin{equation}\label{equ:Delta c def}
\Delta (x_1\cdots x_n) =  
\left(x_1\otimes 1 + 1\otimes x_1 + \sum_{k_1=1}^\infty \circ_{k_1}^* x_1\right) \cdots \left(x_n\otimes 1 + 1\otimes x_n + \sum_{k_n=1}^\infty \circ_{k_n}^* x_n\right).
\end{equation}
Note that by conilpotency the infinite sums are in fact finite on every argument.
Dual to Proposition \ref{prop:U pre Lie} we have the following result.
\begin{prop}[{dual of \cite[Theorem 2.12]{OG}}]
\label{prop:U pre co Lie}
    Let $\fc$ be a conilpotent graded pre-Lie coalgebra.
    Together with the commutative product $\cdot$ on $S\fc$ the operation $\Delta$ defines a graded commutative Hopf algebra structure on $S\fc$, and one has an isomorphism of Hopf algebras 
    \[
    U^c\fc \cong (S\fc,\cdot,\Delta)
    \]
    with the universal enveloping coalgebra of the graded Lie coalgebra $(\fc,\Delta_{[,]})$. 
\end{prop}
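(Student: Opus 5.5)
The plan is to obtain Proposition \ref{prop:U pre co Lie} by dualizing Proposition \ref{prop:U pre Lie}, using a grading to keep all dualities finite, and then to supply a direct check for the non-finite-type case.

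\textbf{Step 1: reduce to finite type.} First we reduce to the case where $\fc$ carries an auxiliary non-negative weight grading $\fc=\bigoplus_{w\ge 1}\fc_w$ with finite-dimensional pieces, compatible with $\circ^*$. The weight is taken to be one plus the coradical filtration degree coming from conilpotency, and we pass to the associated graded if necessary. Alternatively, we write $\fc$ as a filtered colimit of finite-dimensional conilpotent sub-coalgebras; by conilpotency every element lies in one.

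\textbf{Step 2: dualize.} On a finite-dimensional conilpotent sub-coalgebra $\fc'$, the linear dual $\fg=(\fc')^*$ is a nilpotent pre-Lie algebra with $\circ=(\circ^*)^*$, and its commutator is dual to $\Delta_{[,]}$. By Proposition \ref{prop:U pre Lie}, $(S\fg,\Delta,*)\cong U\fg$ as Hopf algebras. Taking the graded dual, with $S\fg^*\cong S\fc'$ via the characteristic-zero identification of invariants and coinvariants recalled above, gives a commutative Hopf algebra $(S\fc',\cdot,*^\vee)$, and $(U\fg)^\vee\cong U^c\fc'$ by definition of the universal enveloping coalgebra.

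\textbf{Step 3: identify $*^\vee$ with \eqref{equ:Delta c def}.} This step is the main computation. The dual of $*$, evaluated on a single generator $z\in\fc'$, only sees the terms of $(x_1\cdots x_p)*(y_1\cdots y_q)$ that are linear in $\fg$. These are:
\begin{itemize}
\item the terms with $p=1$, which are $x_1\circ_q(y_1,\dots,y_q)$;
\item the terms with $p=0$ or $q=0$, which are trivial.
\end{itemize}
Hence $\Delta z=z\otimes1+1\otimes z+\sum_k(\circ_k)^\vee z$. We then check that the recursion \eqref{equ:circ star} is exactly the transpose of the recursive definition of $\circ_k$. Here the factor $\frac1k$ and the term $\hat\circ^*$ account for passing between invariants and coinvariants: the $k$ terms $\sum_j x\circ(\dots,y_j\circ y_k,\dots)$, symmetrized, correspond to $\hat\circ^*$ applied to the $S^{k-1}$ output. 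The sign bookkeeping is where errors would creep in, so we would verify it on $k=2$ and argue inductively. Since $*^\vee$ is multiplicative for the commutative product (as $*$ is comultiplicative), $\Delta$ on monomials is the product \eqref{equ:Delta c def}.

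\textbf{Step 4: pass to the colimit.} All structure maps are natural under inclusions of sub-coalgebras, so the Hopf structure on $S\fc=\mathrm{colim}\,S\fc'$ and the isomorphism with $U^c\fc=\mathrm{colim}\,U^c\fc'$ follow. Conilpotency guarantees the sums are finite.

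The main obstacle is Step 3: the combinatorial and sign identification of the dual recursion, specifically the normalization $\frac1k$ relating $\circ_k^*$ to the symmetric transpose of $\circ_k$. If that proves awkward, the fallback is to verify coassociativity and compatibility directly from the pre-Lie co-identity, using the universal property of $U^c$ on primitives-dual (cogenerators) to get the isomorphism.
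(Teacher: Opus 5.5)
Your proposal is correct and follows the paper's route. The paper gives no argument beyond calling the statement the dual of Oudom--Guin's theorem, and your Steps 2--4 carry out exactly that dualization; your transposition of the recursion for $\circ_k$ into \eqref{equ:circ star}, including the factor $\frac1k$, checks out once the symmetric power of the dual is paired with $S^k\fc$ via the symmetrization map the paper uses.

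The one thing to discard is the associated-graded option in Step 1, since it would replace $\fc$ by a different pre-Lie coalgebra. The filtered-colimit reduction is the right one. It is valid here because strong conilpotency leaves only finitely many nonzero iterated coproducts of a given element, and their tensor-factor coefficient spaces span a finite-dimensional sub-pre-Lie coalgebra containing it.
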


\section{Graph complexes}

In this section, we introduce several varieties of graph complexes. A summary of all of them is found in Section \ref{sec:gc notation} below.

\label{sec:graphs}
\subsection{Definition and recollections}
Let $\fG_0$ be the graded vector space spanned by pairs $(\gamma, o)$ consisting of a finite combinatorial graph $\gamma$ together with an orientation datum $o$, which is an ordering of the set of edges. The grading $|(\gamma,o)|$ is given by the number of edges in $\gamma$. The graphs can be connected or not, vertices may have any positive valence\footnote{That is, we disallow the graph consisting of a single vertex without edge. This could in fact also be included, but will yield slightly more cumbersome formulas.}, and we allow \emph{tadpoles}, i.e. edges with identical endpoints, and multiple edges between the same pair of vertices. The empty graph is included, but the graph consisting of a single vertex without edges is not.
Such a pair is considered an element of homological degree $k$, if $\gamma$ has $k$ edges. 
For the orientation datum, we use the notation $o=e_1\wedge \cdots \wedge e_k$ to denote the ordering of the $k$ edges $e_1<e_2<\cdots < e_k$. 
We quotient by two relations:

\begin{itemize}
    \item (Isomorphism) Let $\phi\colon\gamma\to \nu$ be a graph isomorphism. Let $o$ be an orientation
    for $\gamma$ and $\phi_*(o)$ the natural orientation of $\nu$ induced by $\phi$. Then we set 
    \begin{equation}\label{equ:gc iso rel}
    (\gamma,o) = (\nu,\phi_*(o)).
    \end{equation}
    \item (Orientation change) Suppose that $o'=e_{\sigma(1)}\wedge \cdots \wedge e_{\sigma(n)}$ is another orientation of $\gamma$ obtained by applying some permutation $\sigma\in S_n$. Then we set 
    \begin{equation}\label{equ:gc perm rel}
    (\gamma,o')= (-1)^\sigma (\gamma,o).
    \end{equation}
\end{itemize}
Note that the isomorphism relation makes graphs with multiple edges between the same endpoints vanish.

We will often suppress the orientation datum $o$ in the notation and just talk about the graphs $\gamma$, with the implicit understanding that an orientation datum is fixed.
We let 
\[
\fGc_0 \subset \fG_0
\]
be the graded subspace spanned by connected graphs.
Furthermore, we define 
\begin{align*}
    \fGC_0 &= \fG_0^* & \fGCc_0 &= \fGc_0^*
\end{align*}
to be the respective graded dual spaces.

The graded vector space $\fGc_0$ is a graded pre-Lie coalgebra with the pre-Lie coproduct given by subgraph contraction
\[
\circ^* (\gamma,e_1\wedge\cdots \wedge e_k)
=
\sum_{\gamma'\subset \gamma} \pm (\gamma/\gamma',
\bigwedge_{k\atop e_k\notin \gamma'} e_k)
\otimes (\gamma', \bigwedge_{k\atop e_k\in \gamma'} e_k),
\]
where the sum is over connected proper subgraphs $\gamma'$ of $\gamma$ with at least one edge, $\gamma/\gamma'$ is obtained by contracting $\gamma'$ inside $\gamma$ to one vertex, and the sign is the sign of the unshuffle permutation moving the edges of $\gamma'$ to the right of the other edges.
We can also rewrite the same formula as 
\begin{equation}\label{equ:gc coprod conn}
\circ^* (\gamma,e_1\wedge\cdots \wedge e_k)
=
\sum_{\substack{ E\subsetneq E\gamma \\ E\neq \emptyset \\ \gamma(E)\text{ conn.} }} \pm (\gamma/E,
\bigwedge_{e\notin E} e)
\otimes (\gamma(E), \bigwedge_{e\in E} e),
\end{equation}
where the sum is now over subsets of edges, such that the subgraph $\gamma(E)$ consisting of these edges and the adjacent vertices is connected.
Note that the pre-Lie coalgebra $\fGc_0$ is obviously conilpotent.

The graded pre-Lie coalgebra structure makes the dual space $\fGCc_0$ into a graded pre-Lie algebra. Because $\fGc_0$ has a canonical (up to sign) basis given by graph isomorphism classes, we identify elements in $\fGCc_0$ with oriented graphs as well. Since we are using homological grading, in $\fGCc_0$, the number of edges of $\gamma$ is $-|(\gamma,o)|$. Omitting orientations, we denote the pre-Lie product by
\[
\gamma\bullet \nu.
\]
It is obtained by summing over all ways of inserting $\nu$ in place of a vertex of $\gamma$.
More precisely, we have $\gamma\bullet \nu =\sum_{v\in V\gamma}\gamma \bullet_v \nu$, and $\gamma \bullet_v \nu$ is obtained by deleting vertex $v$ from $\gamma$ and then summing over all ways of reconnecting the half-edges previously attached to $v$ to vertices of $\nu$.
Note that this operation is graded with respect to the number of edges. 
Furthermore, the resulting graphs are endowed with the orientation datum that is obtained by just concatenating those of $\gamma$ and $\nu$.

The special element 
\begin{equation}
    \label{equ:L def}
L=
\begin{tikzpicture}
    \node[int] (v) at (0,0) {};
    \node[int] (w) at (0.7,0) {};
    \draw (v) edge (w);
\end{tikzpicture}
\end{equation}
is a Maurer--Cartan element in $(\fGCc_0, d=0)$, i.e., $[L,L]=0$.
The Lie bracket with $L$ hence defines a differential on $\fGCc_0$, and by duality also one on $\fGc_0$.
Similarly, the element 
\begin{equation}
    \label{equ:T def}
T=\tadpole
\end{equation}
is also a Maurer--Cartan element in $\fGCc_0$, as well as the linear combination $L+T$, i.e., $[T,T]=[L,T]=0$. 
We consider $\fGCc_0$ as a dg Lie algebra with differential $d=\frac12 [L,-]$.

Next, we define the dg Lie subalgebras $\GC_0\subset \GC_0^+ \subset \fGCc_0$, where $\GC_0$ consists of linear combinations of graphs all of whose vertices are at least trivalent, and $\GC_0^+=\GC_0\oplus \Q T$ is just the one-dimensional extension by the element $T$.

Dually, we have quotients (and dg Lie coalgebra maps) $\fGc_0\twoheadrightarrow \G_0^+\twoheadrightarrow \G_0$, where to obtain $\G_0$ we declare all graphs with univalent or bivalent vertices to be zero, and for $\G_0^+$ we just do not set $T$ to zero.
We then have 
\begin{align*}
    \G_0^* &= \GC_0 &  (\G_0^+)^* &= \GC_0^+.
\end{align*}

Finally, we note that since our homological degree in $\fGc_0$ is just the number of edges, the complex is positively graded and finite dimensional in each homological degree. Of course, these statements hold a fortiori for the quotients $\G_0^+$ and $\G_0$. 

\begin{rem}\label{rem:gc pairing}
Note that isomorphism classes of graphs may be used to index both a basis of $\fGc_0$, as well as a basis of the dual space $\fGCc_0$.
We use the convention that the pairing $\fGCc_0\otimes \fGc_0\to \Q$ sends a pair of graphs with orientation data $(\gamma,o),(\gamma',o')$ to 0 if $\gamma$ and $\gamma'$ are not isomorphic, and if they are isomorphic to $\pm |\operatorname{Aut}(\gamma)|$, with the sign of the permutation that maps the orientation data to each other. This convention ensures the natural formulas for the dg Lie structure on $\fGCc_0$ mentioned above.
\end{rem}

In the following we will say that an edge in a graph is a \emph{bridge} if removing this edge while keeping all vertices increases the number of connected components of the graph. Equivalently, a bridge is an edge that is not part of any cycle. 
An \emph{antenna} is an edge adjacent to a univalent vertex. Antennas are in particular bridges.

\subsection{The universal enveloping coalgebra of graphs}
Next, note that obviously the space of possibly disconnected graphs $\fG_0$ can be understood as the symmetric algebra on the connected part $\fGc_0$, 
\[
\fG_0 \cong S \fGc_0,
\]
with the commutative product being the ($\Q$-linear extension of the) disjoint union of graphs, and the unit being the empty graph. 
Then Proposition \ref{prop:U pre co Lie} endows $\fG_0$ with a coproduct $\Delta$ such that we have an isomorphism of graded Hopf algebras
\[
U^c \fGc_0 \cong (S\fGc_0, \cdot, \Delta) \cong
(\fG_0, \cup, \Delta).
\]
Since the differential is given by twisting with a Maurer--Cartan element (namely $L/2$ above), this immediately implies that the above isomorphism is also an isomorphism of dg Hopf algebras.
It remains to deduce the explicit combinatorial formulas for the operation $\circ_k^*$ and the coproduct $\Delta$. 

\begin{lemma}
    Let $(\gamma,e_1\wedge\cdots \wedge e_\ell)\in \fGc_0$, then
\begin{equation}\label{equ:circ gc}
\circ_k^* 
(\gamma,e_1\wedge\cdots \wedge e_\ell)
=
\sum_{\emptyset \neq E\subsetneq E(\gamma) \atop \# \pi_0 \gamma(E) = k } \pm (\gamma/E,
\bigwedge_{e\notin E} e)
\otimes (\gamma(E_1), \bigwedge_{e\in E_1} e)\cdots (\gamma(E_k), \bigwedge_{e\in E_k} e), 
\end{equation}
where the sum is over proper subsets of edges such that the induced graph $\gamma(E)$ has exactly $k$ connected components $\gamma(E_1),\dots, \gamma(E_k)$ corresponding to the partition $E=E_1\sqcup \cdots \sqcup E_k$. The sign is the sign of the permutation bringing the edges in the indicated order.
\end{lemma}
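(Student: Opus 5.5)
Since both sides of \eqref{equ:circ gc} are defined by finite combinatorial data, the plan is to prove the formula by induction on $k$ using the recursion \eqref{equ:circ star}. The case $k=1$ is exactly \eqref{equ:gc coprod conn}, because a subset $E$ with $\gamma(E)$ having one component is the same as a subset with $\gamma(E)$ connected. For the inductive step, assume \eqref{equ:circ gc} holds for $k-1$. We expand the two terms of $k\,\circ_k^*$ separately and organize every summand by the data $(E;E_1,\dots,E_k)$ it produces.

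For the first term, $(\id\otimes\mu)(\circ_{k-1}^*\otimes\id)\circ^*$, we first contract a connected proper subgraph $\gamma(F)$. We then apply the induction hypothesis to $\gamma/F$, which contracts a subset $E'$ of the edges of $\gamma/F$ with $k-1$ components. Since the edge sets of $\gamma$ and $\gamma/F$ agree outside $F$, we may view $E'\subset E(\gamma)\setminus F$. The vertex of $\gamma/F$ coming from $F$ either lies in $\gamma/F(E')$ or not.
\begin{itemize}
\item If it does not, then $E=E'\sqcup F$ has $k$ components in $\gamma$, namely the $E'_j$ together with $F$. The result is the summand $(\gamma/E)\otimes\gamma(E_1)\cdots\gamma(E_k)$, and $F$ is one distinguished component.
\item If it does, the term has a component of $E'$ containing the contracted vertex. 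Its preimage in $\gamma$ is a subgraph that contains $\gamma(F)$, but the graph recorded in the tensor factor is the contracted one, not $\gamma(E'_j\cup F)$.
\end{itemize}
For the second term, $(\id\otimes\hat\circ^*)\circ_{k-1}^*$, the induction hypothesis gives $\gamma/E\otimes\gamma(E_1)\cdots\gamma(E_{k-1})$, and we then apply $\circ^*$ to one factor $\gamma(E_j)$. This splits $E_j$ into a connected piece $F\subset E_j$ and the contraction $\gamma(E_j)/F$. These terms exactly match the ``bad'' terms of the first expansion: both are indexed by $E$, a component $E_j$ and a connected $F\subsetneq E_j$, and in both the factor recorded is $\gamma(E_j)/F\otimes\gamma(F)$.

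After the cancellation, the surviving terms are the good terms of the first expansion. Each $(E;E_1,\dots,E_k)$ arises exactly $k$ times, once for each choice of which component plays the role of $F$. The factor $\frac1k$ cancels this multiplicity, which yields \eqref{equ:circ gc}.

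One subtlety: properness of $E$ in $\gamma$ is inherited, since $E'\neq E(\gamma/F)$. Conversely, any $E$ with $k\ge2$ components together with a choice of component $F$ gives a proper connected $F$ and a proper $E\setminus F$ in $\gamma/F$, so the good terms cover all summands of \eqref{equ:circ gc}.

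I expect the main obstacle to be the sign bookkeeping. We have to check that the unshuffle signs from the two applications of the coproduct compose to the single sign of the permutation ordering the edges as $(E^c,E_1,\dots,E_k)$, and that the signs match for the cancelling pair. This requires the Koszul signs from permuting the graded factors inside $S^k$. I would handle it by noting that all signs are determined by how the ordered edge list is rearranged, so composing permutations gives the result, and the symmetric product absorbs reorderings of the $E_j$. A cleaner alternative is to verify the formula dually: pair with $\fGCc_0$, where the claim becomes the known statement that $\gamma\circ(\nu_1,\dots,\nu_k)$ inserts the $\nu_j$ simultaneously into distinct vertices. That statement follows by the same induction from the recursion in Section~\ref{sec:pre Lie}, with the subtracted terms removing insertions of $\nu_k$ into vertices of previously inserted $\nu_j$.
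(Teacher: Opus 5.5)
Your proposal is correct and follows essentially the same route as the paper. Both argue by induction on $k$ via the recursion \eqref{equ:circ star} and split the first term according to whether the vertex coming from the contracted connected piece lies in the $(k-1)$-component subgraph. The terms where it does cancel against $(\id\otimes\hat\circ^*)\circ_{k-1}^*$ via $E'=E''\setminus F$, and the multiplicity $k$ of the surviving terms is absorbed by the prefactor $\frac1k$; like you, the paper records the signs only as $\pm$ and does not check them explicitly.
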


\begin{proof}
    We trace the construction of Section \ref{sec:pre Lie co} for the case $\fc = \fGc_0$.
    To see the above formula, we argue by induction. For $k=1$ this is just \eqref{equ:gc coprod conn}. 
For $k\geq 2$ we use the induction hypothesis in the definition \eqref{equ:circ star}. We obtain 
\begin{multline*}
(\id\otimes \mu)(\circ_{k-1}^*\otimes \id)\circ^*
(\gamma,e_1\wedge\cdots \wedge e_\ell)
=
\\
\sum_{\emptyset \neq E\subsetneq E(\gamma) \atop \gamma(E)\text{ conn.}}
\sum_{\emptyset \neq E'\subsetneq E(\gamma)\setminus E \atop \# \pi_0 (\gamma/E)(E') = k-1}
\pm (\gamma/(E\cup E'), \bigwedge_{e\notin E\cup E'} e)
\otimes \\\otimes 
((\gamma/E)(E_1'),\bigwedge_{e\in E_1'} e)
\cdots ((\gamma/E)(E_{k-1}'),\bigwedge_{e\in E_{k-1}'} e)
(\gamma(E), \bigwedge_{e\in E} e).
\end{multline*}
We can decompose the sum into two types of terms. Note that $\gamma/E$ has a distinguished vertex $*$ at which $\gamma(E)$ was contracted. The first type happens when $*$ is not part of any $\gamma(E_j')$, then $(\gamma/E)(E_j')=\gamma(E_j')$. The sum over all $E$ and $E'$ where $E$ is not connected to any components in $E'$ exactly produces $k$ times \eqref{equ:circ gc}, with the prefactor accounting for one of the $k$ subgraphs being singled out. The second type happens, when $*$ is part of a $\gamma(E'_j)$.

The other term in the recursive definition of $\circ_k^*$ is given by
\begin{multline*}    
(\id_\fc \otimes \hat \circ^*)  \circ_{k-1}^*(\gamma,e_1\wedge\cdots \wedge e_\ell) = \\
\sum_{j=1}^{k-1}\sum_{\emptyset \neq E''\subsetneq E(\gamma) \atop \# \pi_0 \gamma(E'') = k-1 } \pm (\gamma/E'',
\bigwedge_{e\notin E''} e)
\otimes (\gamma(E''_1), \bigwedge_{e\in E''_1} e)\cdots  \\
\left(\sum_{\emptyset \neq E\subsetneq E''_j \atop \gamma(E)\text{ conn.}} \pm (\gamma(E''_j)/E,\bigwedge_{e \in E''_j \setminus E} e)(\gamma(E), \bigwedge_{e\in E}e) \right) \cdots(\gamma(E''_{k-1}), \bigwedge_{e\in E''_{k-1}} e).
\end{multline*}
We observe that setting $E'= E''\setminus E$ and in particular $E'_j = E''_j\setminus E$ and $E'_i = E''_i$ for $i\neq j$ identifies this sum with the sum of the terms of the second type in the first equation.

Therefore, the terms of the second type cancel in the recursive definition \eqref{equ:circ star} of $\circ_k^*$. This shows the lemma.
\end{proof}

\begin{prop}
    Let $\Delta$ be the coproduct on $\fG_0$ obtained from the coproduct $\Delta$ on $U^c \fGc_0\cong S\fGc_0$ as in Proposition \ref{prop:U pre co Lie} via the isomorphism $\fG_0\cong S\fGc_0$. Then for $(\Gamma,e_1\wedge\cdots \wedge e_\ell) \in \fG_0$ we have
    \begin{equation}\label{equ:gc coprod}
    \Delta (\Gamma,e_1\wedge\cdots \wedge e_\ell)
=
\sum_{E\subset E(\Gamma)} \pm (\Gamma/E,
\bigwedge_{e\notin E} e)
\otimes (\Gamma(E), \bigwedge_{e\in E} e),
\end{equation}
where the sum is over all subsets of edges, $\Gamma(E)$ is the subgraph consisting of these edges and the adjacent vertices, $\Gamma/E$ is obtained by contracting the edges in $E$, and the sign $\pm$ is the sign of the unshuffle permutation moving the edges in $E$ to the right of the other edges $\{e_1,\dots,e_\ell\}\setminus E$. If $E=\emptyset$, then we interpret $(\Gamma(E), \bigwedge_{e\in E} e)$ as the empty graph.
If the contraction $\Gamma/E$ produces vertices of valence zero, then these vertices are removed from the graph.
This applies in particular to the case $E=E(\Gamma)$, when $(\Gamma/E,
\bigwedge_{e\notin E} e)$ is understood as the empty graph.
\end{prop}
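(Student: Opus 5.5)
Since the coproduct $\Delta$ of Proposition \ref{prop:U pre co Lie} is multiplicative for the commutative product on $S\fGc_0$, which corresponds to disjoint union in $\fG_0$, the plan is to treat connected graphs first and then pass to products. We then check that the right-hand side of \eqref{equ:gc coprod}, call it $\Delta'$, is also multiplicative with respect to disjoint union. Both sides are then algebra morphisms $\fG_0\to\fG_0\otimes\fG_0$ that agree on the generators $\fGc_0$, so they agree everywhere.

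\textbf{Step 1 (connected case).} Let $\gamma$ be connected. By \eqref{equ:Delta c def} with $n=1$,
\[
\Delta\gamma=\gamma\otimes 1+1\otimes\gamma+\sum_{k\geq1}\circ_k^*\gamma ,
\]
and by the preceding Lemma, $\circ_k^*\gamma$ is the sum over proper nonempty edge subsets $E$ with $\#\pi_0\gamma(E)=k$ of $\pm(\gamma/E)\otimes\gamma(E_1)\cdots\gamma(E_k)$. Since $\gamma(E)$ is the disjoint union of its components, this term equals $\pm(\gamma/E)\otimes\gamma(E)$. Summing over $k$ therefore covers all $E$ with $\emptyset\neq E\subsetneq E(\gamma)$. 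The two remaining terms are the boundary cases. For $E=\emptyset$ we get $\gamma\otimes(\text{empty graph})=\gamma\otimes 1$. For $E=E(\gamma)$, the quotient $\gamma/E$ is a single vertex of valence zero, which is removed, leaving the empty graph; this gives $1\otimes\gamma$. The signs agree: in each case the sign is the unshuffle sign moving the edges of $E$ to the right, and in the Lemma the edges of $E_1,\dots,E_k$ are arranged consecutively in that block. Reordering them among themselves only permutes factors in $S\fc$, and the Koszul sign of that permutation is absorbed in graded commutativity.

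\textbf{Step 2 (multiplicativity of $\Delta'$).} For $\Gamma=\Gamma_1\sqcup\Gamma_2$, a subset $E\subset E(\Gamma)$ is the same as a pair $E=E^1\sqcup E^2$ with $E^i\subset E(\Gamma_i)$. We have
\[
\Gamma/E=\Gamma_1/E^1\sqcup\Gamma_2/E^2,\qquad \Gamma(E)=\Gamma_1(E^1)\sqcup\Gamma_2(E^2),
\]
and the removal of valence-zero vertices is compatible with disjoint union. So the terms of $\Delta'(\Gamma)$ match those of $\Delta'(\Gamma_1)\Delta'(\Gamma_2)$, where the product in $\fG_0\otimes\fG_0$ carries the Koszul sign $(-1)^{|E^2_c||E^1|}$; here $E^2_c=E(\Gamma_2)\setminus E^2$ denotes the uncontracted edges of $\Gamma_2$.

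\textbf{Step 3 (signs).} The remaining work is to show that the unshuffle sign for $(e(\Gamma_1),e(\Gamma_2))\mapsto(E_c,E)$ equals the product of the two individual unshuffle signs times that Koszul sign. Write $E^1_c=E(\Gamma_1)\setminus E^1$ and $E_c=E^1_c\sqcup E^2_c$. The unshuffle can be factored as
\[
E^1_cE^1E^2_cE^2\;\longrightarrow\;E^1_cE^2_cE^1E^2,
\]
and the second arrow exactly swaps the blocks $E^1$ and $E^2_c$, producing $(-1)^{|E^1||E^2_c|}$. I expect this bookkeeping to be the only delicate point, since one must also keep track of the identification $\fG_0\cong S\fGc_0$ and of the isomorphism relation (automorphisms). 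The latter is harmless, because both sides are defined on oriented representatives and are compatible with isomorphisms. By induction on the number of components, $\Delta=\Delta'$ on all of $\fG_0$.
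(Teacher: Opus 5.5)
Your proposal is correct and is essentially the paper's argument. The paper also inserts the Lemma's formula \eqref{equ:circ gc} for $\circ_k^*$ into the multiplicative formula \eqref{equ:Delta c def}, matching $x_j\otimes 1$ with subsets $E$ containing no edge of the $j$-th component and $1\otimes x_j$ with those containing all of its edges (via removal of the valence-zero vertex); you simply make the multiplicativity and the sign bookkeeping explicit. The one point to state more precisely is in Step 1: the Lemma's sign is the unshuffle sign times the sign that sorts $E$ from its original order into the blocks $E_1,\dots,E_k$, and this second sign is exactly the one in $(\gamma(E),\bigwedge_{e\in E}e)=\pm\,(\gamma(E_1),\bigwedge_{e\in E_1}e)\cdots(\gamma(E_k),\bigwedge_{e\in E_k}e)$ under $\fG_0\cong S\fGc_0$, not merely a permutation of factors.
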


\begin{proof}
Finally, inserting \eqref{equ:circ gc} into \eqref{equ:Delta c def}, we obtain precisely \eqref{equ:gc coprod}. Note here that the terms $1\otimes x_j$ appearing in \eqref{equ:Delta c def} are exactly produced by our convention that if all edges in a connected component are contracted, then the resulting zero-valent vertex is removed. The terms $x_j\otimes 1$ correspond naturally to the case that no edge of the $j$-th connected component appears in $E$.
\end{proof}

%The constructions of Section \ref{sec:pre Lie} immediately dualize to pre-Lie coalgebras $\fc$. 
%Let us specialize to the case $\fc=\fGc_0$.
%We will say that an edge in a graph is an \emph{antenna} if it connects to a univalent vertex.
%\begin{prop}
 %   The universal enveloping coalgebra of $\fGc_0^L$, the dg Lie coalgebra twisted by $L$ defined in \eqref{equ:L def}, then has the following structure:
 %   \begin{itemize}
%        \item The underlying graded vector space is $\fG_0=S(\fGc_0)$, i.e., the space of possibly non-connected graphs. 
 %       \item The commutative product is the product on the symmetric algebra. Combinatorially, this is the disjoint union of graphs. The empty graph is the unit of this product.
  %      \item The reduced coproduct 
   %     satisfies the formula 
    %    \begin{equation}\label{equ:gc coprod}
    %    \Delta' \Gamma = \sum_{E\subsetneq E(\Gamma)\atop E \neq \emptyset} \pm \Gamma/E \otimes \Gamma(E), 
     %   \end{equation}
       % where the sum runs over proper subsets $E$ of edges of $\Gamma$, $\Gamma/E$ is the graph obtained by contracting all edges in $E$, and $\Gamma(E)$ is the subgraph of $\Gamma$ consisting of the edges of $E$ and all vertices incident to these edges.%\
       % \item 
    
    The differential on $U^c \fGc_0\cong \fG_0$ is obtained by twisting with the Maurer--Cartan element $\frac 12 L$, i.e., it has the form, see also \eqref{equ:lie on hopf}, 
        \begin{equation}\label{equ:L twist}
        d\colon \Gamma \longmapsto -\frac 12\sum L(\Gamma') \cdot \Gamma'' + \frac 12 \sum (-1)^{|E(\Gamma')|} \Gamma' \cdot L(\Gamma''),
        \end{equation}
        where we use Sweedler notation $\Delta\Gamma = \sum \Gamma' \otimes \Gamma''$ to denote the coproduct. 
    Using the explicit form \eqref{equ:gc coprod} of the coproduct on $\fG_0$ we may show the following combinatorial formula for the differential.
    \begin{lemma}\label{lem:d formula}
        Let $(\Gamma,e_1\wedge\cdots \wedge e_k)\in \fG_0$.
        Then we have that 
        \[
        d (\Gamma,e_1\wedge\cdots \wedge e_k) = 
        d_c (\Gamma,e_1\wedge\cdots \wedge e_k)
        + 
        d_b (\Gamma,e_1\wedge\cdots \wedge e_k),
        \]
        where 
        \begin{align} \label{equ:gc c diff}
            d_c(\Gamma,e_1\wedge\cdots \wedge e_k) &=
            \sum_{j=1\atop {e_j \text{ non-antenna,}\atop \text{non-tadpole}}}^k (-1)^{j-1} (\Gamma/e_j,e_1\wedge\cdots\wedge \hat e_j \wedge \cdots \wedge e_k) 
            \\
            \label{equ:gc b diff}
            d_b(\Gamma,e_1\wedge\cdots \wedge e_k) &=
            \sum_{j=1\atop {e_j \text{ bridge,} \atop \text{non-antenna}}}^k (-1)^{j} (\Gamma-\{e_j\},e_1\wedge\cdots\wedge \hat e_j \wedge \cdots \wedge e_k) .
        \end{align}
    \end{lemma}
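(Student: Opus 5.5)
We compute $d\Gamma$ directly from the twisting formula \eqref{equ:L twist}, inserting the explicit coproduct \eqref{equ:gc coprod}. Both summands evaluate the functional $L$ on one tensor factor. By Remark \ref{rem:gc pairing}, $L(\Gamma')$ vanishes unless $\Gamma'$ is isomorphic to the single-edge graph $L$. In that case its value is $\pm|\operatorname{Aut}(L)|=\pm 2$, and this factor $2$ cancels the prefactor $\frac12$. So in each sum only terms where one tensor factor is a single edge survive, and we identify these terms edge by edge.

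For the second summand $\Gamma'\cdot L(\Gamma'')$, we need $\Gamma''=\Gamma(E)$ to be a single non-loop edge, so $E=\{e_j\}$ with $e_j$ not a tadpole. The surviving term is then $\Gamma/e_j$ with orientation $e_1\wedge\cdots\wedge\hat e_j\wedge\cdots\wedge e_k$. The unshuffle sign moving $e_j$ to the far right is $(-1)^{k-j}$. Combined with the Koszul sign $(-1)^{|E(\Gamma')|}=(-1)^{k-1}$, this gives $(-1)^{j-1}$. If $e_j$ is an antenna, contracting it produces a graph that also appears in the first sum, and we expect these contributions to cancel; we keep them for now and handle them below.

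For the first summand $L(\Gamma')\cdot\Gamma''$, we need $\Gamma/E$ to be a single edge after zero-valent vertices are removed, with $E$ consisting of all edges except one, $e_j$. Two cases arise.
\begin{enumerate}[(a)]
\item If $e_j$ is a bridge, contracting the rest of $\Gamma$ leaves a single edge between two distinct vertices. Here $\Gamma''=\Gamma(E(\Gamma)\setminus\{e_j\})$, which equals $\Gamma-\{e_j\}$ exactly when both endpoints of $e_j$ still carry other edges, that is, when $e_j$ is not an antenna.
\item If $e_j$ is an antenna, $\Gamma''$ drops the univalent vertex, so $\Gamma''\cong\Gamma/e_j$. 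This contribution cancels against the antenna contribution from the second sum.
\end{enumerate}
If $e_j$ is not a bridge, it becomes a tadpole after contraction and $L$ vanishes on it. The unshuffle sign moving $E$ to the right of $e_j$ is $(-1)^{j-1}$, and together with the leading minus sign this yields $(-1)^j$. The resulting terms are exactly $d_b$.

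The main obstacle is sign bookkeeping. First, we must check that the antenna contributions from the two sums really cancel rather than add; this requires care with the contracted vertex versus the removed one, and with the orientation sign. Second, we must fix the sign of $L$'s pairing value consistently. We also have to handle the degenerate case $\Gamma=L$, where both sums contribute $\pm$ the empty graph. These should cancel, consistent with a single edge being an antenna. We would verify them on small examples, namely $L$ and the path with two edges, to pin down the conventions.
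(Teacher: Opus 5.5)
Your proposal is correct and is essentially the paper's own proof. You insert \eqref{equ:gc coprod} into \eqref{equ:L twist} and use that $L$ pairs to $2$ only with the single-edge graph. The first sum then picks out $E=E(\Gamma)\setminus\{e_j\}$ with $e_j$ a bridge (sign $(-1)^j$), and the second picks out $E=\{e_j\}$ with $e_j$ not a tadpole (sign $(-1)^{j-1}$).

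The antenna cancellation you flag as the main obstacle is already settled by your own sign computation. In the antenna case both sums produce the same graph with the same orientation $e_1\wedge\cdots\wedge\hat e_j\wedge\cdots\wedge e_k$, with opposite signs $(-1)^j$ and $(-1)^{j-1}$.
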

    \begin{proof}
        Note that $L(\Gamma)=2$ if $\Gamma= \begin{tikzpicture}
    \node[int] (v) at (0,0) {};
    \node[int] (w) at (0.7,0) {};
    \draw (v) edge (w);
\end{tikzpicture} \in \fG_0$ and zero for all other graphs, see Remark \ref{rem:gc pairing}.
    Consider simultaneously the definition of the differential \eqref{equ:L twist} and the explicit formula for the coproduct \eqref{equ:gc coprod}.
    For the first term in \eqref{equ:L twist} to be nonzero, we need that $\Gamma'$ is (a multiple of) $L$. This means that in \eqref{equ:gc coprod} we need to pick up a term in which $E$ are all edges except for one, say edge $e_j$, and $e_j$ needs to be a bridge, possibly an antenna. In this case $\Gamma''$ is obtained by deleting $e_j$ from $\Gamma$. The sign is $(-1)^{j}$.

    Next consider the second term of \eqref{equ:L twist}.
    For this to be non-zero we need to have $E=\{e_j\}$, consisting of a single element, in \eqref{equ:gc coprod}.
    Here $e_j$ can be any edge that is not a tadpole. The graph $\Gamma/E$ is then obtained from $\Gamma$ by contracting $e_j$, and the sign is $(-1)^{k-1+(k-j)}=(-1)^{j-1}$. 

    Note that for $e_j$ an antenna, deleting $e_j$ and contracting $e_j$ produces the same graph. Hence these terms cancel between the first and second sum in \eqref{equ:L twist}.
    The remaining terms of the first sum are exactly \eqref{equ:gc b diff}, and those from the second yield \eqref{equ:gc c diff}.
    \end{proof}
  %  \end{itemize}
%\end{prop}

For later reference we also record the following:
\begin{lemma}\label{lem:T action}
Let $\nabla\colon \fG_0 \to \fG_0$ be the action of $\frac12 T$ (see \eqref{equ:T def}). Then for $(\Gamma,e_1\wedge\cdots \wedge e_k)\in \fG_0$ we have that 
\begin{equation}\label{equ:fG0 nabla}
\nabla (\Gamma,e_1\wedge\cdots\wedge e_k)
=\sum_{\substack{j=1\\\text{$e_j$ no tadpole, nor bridge}}}^k (-1)^j (\Gamma-e_j,e_1\wedge\cdots \wedge\hat e_j\wedge \cdots \wedge e_k),
\end{equation}
\end{lemma}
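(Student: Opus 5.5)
The argument mirrors the proof of Lemma \ref{lem:d formula}, with $L$ replaced by $T$. By \eqref{equ:lie on hopf}, the action of $\frac12 T$ on $\fG_0$ is
\[
\nabla \Gamma = -\frac12 \sum T(\Gamma')\,\Gamma'' + \frac12 \sum (-1)^{|\Gamma'||\Gamma''|}\, T(\Gamma'')\,\Gamma',
\]
in Sweedler notation $\Delta\Gamma=\sum\Gamma'\otimes\Gamma''$. We insert the explicit coproduct \eqref{equ:gc coprod}. By the pairing convention of Remark \ref{rem:gc pairing}, $T(\Gamma)=|\operatorname{Aut}(T)|=2$ when $\Gamma$ is the tadpole graph, up to sign, and $T(\Gamma)=0$ for every other graph. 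The proof then consists of identifying which subsets $E\subset E(\Gamma)$ produce a tadpole in the relevant tensor factor.

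\emph{Second term.} Here we need $\Gamma(E)=T$, so $E=\{e_j\}$ with $e_j$ a tadpole edge. The other factor is $\Gamma/e_j$. Contracting a tadpole is the same as deleting it, with the convention that a vertex left zero-valent is removed. So these terms are $\pm\frac12\cdot 2\,(\Gamma-e_j)$, summed over tadpoles $e_j$.

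\emph{First term.} Here we need $\Gamma/E=T$. After the zero-valent vertices are removed, $\Gamma/E$ must consist of a single vertex carrying a single loop. This happens exactly when $E=E(\Gamma)\setminus\{e_j\}$ and $e_j$ becomes a loop after contracting all the other edges, which means its endpoints lie in the same connected component of $\Gamma-e_j$. Equivalently, $e_j$ is not a bridge. Tadpoles are included in this description. The other factor is then $\Gamma(E)=\Gamma-e_j$; vertices whose only edge was $e_j$ occur only when $e_j$ is a tadpole at an otherwise isolated vertex, and the conventions handle them consistently. These terms are $\mp\frac12\cdot 2\,(\Gamma-e_j)$, summed over non-bridges $e_j$.

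\emph{Signs and cancellation.} For a tadpole $e_j$ we compute both signs. In the first term the unshuffle moves the $k-1$ edges of $E$ to the right of $e_j$, giving sign $(-1)^{j-1}$; together with the prefactor $-\frac12\cdot 2$ this yields $(-1)^j$. In the second term the unshuffle moves $e_j$ to the end, giving $(-1)^{k-j}$, and the Koszul factor $(-1)^{(k-1)\cdot 1}$ combines with it to $(-1)^{j-1}$. So for a tadpole $e_j$ the two contributions are opposite and cancel, just as the antenna terms cancelled in Lemma \ref{lem:d formula}. What remains are the first-term contributions $(-1)^j(\Gamma-e_j)$ for edges $e_j$ that are neither bridges nor tadpoles, which is exactly \eqref{equ:fG0 nabla}.

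The main obstacle is the sign bookkeeping: keeping the Koszul sign from $(\id\otimes T)$ and the unshuffle signs consistent with the conventions used in Lemma \ref{lem:d formula}. The combinatorial identification of the terms is straightforward.
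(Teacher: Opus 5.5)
Your proposal is correct and follows essentially the same route as the paper's proof. The paper likewise expands the action of $\frac12 T$ via the coproduct \eqref{equ:gc coprod} and identifies the first-term contributions as $E=E(\Gamma)\setminus\{e_j\}$ with $e_j$ a non-bridge (tadpoles included), carrying sign $(-1)^j$. It identifies the second-term contributions as $E=\{e_j\}$ with $e_j$ a tadpole, carrying sign $(-1)^{j-1}$, so that the tadpole terms cancel and \eqref{equ:fG0 nabla} remains.
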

\begin{proof}
Using the coproduct, the defining formula is 
\begin{equation*}%\label{equ:T twist}
\nabla\colon \Gamma \longmapsto -\frac 12\sum T(\Gamma') \cdot \Gamma'' + \frac 12 \sum (-1)^{|E(\Gamma')|} \Gamma' \cdot T(\Gamma'').
\end{equation*}
The first term can only be nonzero if $\Gamma'=T$. This is true only if in \eqref{equ:gc coprod} $E=\{e_1,\dots,\hat e_j,\dots,e_k\}$ with $e_j$ a non-bridge. But $e_j$ can be a tadpole. The sign is $(-1)^j$.
On the other hand $\Gamma''$ can only be a tadpole if $E=\{e_j\}$ and $e_j$ is a tadpole. The sign is $(-1)^{j-1}$. Hence the tadpole terms cancel and only the terms in \eqref{equ:fG0 nabla} remain.
\end{proof}

\begin{rem}
Note that the graph $\Gamma/E$ in \eqref{equ:gc coprod} is obtained by individually contracting all edges in $E$. This is not the same as contracting the whole subgraph $\Gamma(E)$ to a point, but rather the same as contracting each of its connected components separately. In particular, the sum of the loop orders of the graphs appearing on the right-hand side of \eqref{equ:gc coprod} is the same as the loop order of $\Gamma$, so that the coproduct respects the loop order grading.
%For the last point we note that non-triviality of the connected components means that each component has to have at least one edge.
%
%Note that the Hopf algebra structure on $\fG_0$ is the associated graded Hopf algebra defined in \cite[Corollary 5.20]{BCGP}.  
%The full one can similarly be obtained by twisting with $T$. 
\end{rem}

\subsection{Wheel classes}
The $n$-wheel graphs for $n\geq 3$ are the graphs of the form
\[
W_n=
\begin{tikzpicture}
    \node[int](v) at(0,0) {};
    \node (v1) at (0:1) {$\cdots$};
    \node[int] (v2) at (72:1) {};
    \node[int] (v3) at (144:1) {};
    \node[int] (v4) at (216:1) {};
    \node[int] (v5) at (288:1) {};
    \draw (v) edge (v1) edge (v2) edge (v3) edge (v4) edge (v5)
    (v1) edge (v2) (v2) edge (v3) (v3) edge (v4) (v4) edge (v5) (v5) edge (v1);
\end{tikzpicture}
\]
with a total of $2n$ edges and $n+1$ vertices.
Fixing an arbitrary orientation (i.e., an ordering of the set of edges), we may consider $W_n$ as an element of $\G_0$. As such, $W_n=0$ for even $n$ by symmetry, while $W_n\neq 0$ for odd $n$.
Furthermore, the wheel classes are obviously closed, $dW_n=0$, since any edge contraction results in a double edge. It is known that for odd $n$ the wheel classes define non-trivial homology classes $W_n\in H_{2n}(\G_0)$, see e.g. Ward \cite[Corollary 1.2]{Ward21} for an explicit statement.

\subsection{Deformed differential $\nabla$}
Since the tadpole $T\in \GC_0^+$ of \eqref{equ:T def} is a Maurer--Cartan element,
we may twist the dg Lie algebra $(\GC_0^+,d)$ by $\frac 12T$.
The resulting Lie algebra has the same underlying vector space and Lie bracket, just the differential is changed from $d$ (vertex splitting) to $d+\frac12[T,-]$. For easier notation we will generally write $\nabla$ for the action of $\frac12T$, so that the twisted differential is $d+\nabla$.
We will not use this fact later, but note that combinatorially, $\nabla$ acts on a graph $\gamma\in \GC_0^+$ by summing over all ways of adding one additional edge to $\gamma$, between any pair of distinct vertices. This is the dual statement to Lemma \ref{lem:T action}.

While the cohomology of $(\GC_0,d)$ is largely unknown, the cohomology of the twisted complex $(\GC_0,d+\nabla)$ can be fully computed:

\begin{thm}[{Khoroshkin--Willwacher--\v Zivkovi\'c \cite{KWZ}, Willwacher \cite[Theorem 4.3]{WillwacherTriconnected}}]
\label{thm:nabla cohom}
We have 
\[
H(\GC_0^+,d+\nabla) \cong \Q T \oplus \bigoplus_{k\geq 1} \Q \sigma_{2k+1}, 
\]
where the classes $\sigma_{2k+1}$ are in cohomological degree $4k+2$.
The class $\sigma_{2k+1}$ pairs non-trivially with $W_{2k+1}\in \G_0$.
\end{thm}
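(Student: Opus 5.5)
This statement is quoted from \cite{KWZ} and \cite[Theorem 4.3]{WillwacherTriconnected}, so the intended proof is a reduction to those works. The plan is to identify $(\GC_0^+,d+\nabla)$ with the complex computed there, and then add one separate argument for the pairing with the wheels.

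\emph{Step 1 (splitting off $T$).} Since $[T,T]=[L,T]=0$, we have $dT=0$ and $\nabla T=0$, so $\Q T$ is a subcomplex. I would check that the projection $\GC_0^+\to \Q T$ is also a chain map. Neither $d$ nor $\nabla$ produces $T$ from a graph in $\GC_0$: every such graph has at least one trivalent vertex, and both operations only split vertices or add edges, whereas $T$ has a single bivalent vertex. Hence
\[
(\GC_0^+,d+\nabla)\cong (\GC_0,d+\nabla)\oplus \Q T,
\]
and it suffices to show $H(\GC_0,d+\nabla)\cong\bigoplus_{k\ge1}\Q\sigma_{2k+1}$.

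\emph{Step 2 (the computation for $\GC_0$).} Here I would invoke \cite{KWZ}. They show that twisting the even graph complex by the Maurer--Cartan element given by the sum of all ways of adding an edge gives a complex quasi-isomorphic to a small explicit complex. In the $\GC_0$ grading (degree equal to number of edges) its cohomology is one class per loop order $2k+1$, sitting in degree $4k+2$ and represented by a deformation of a wheel-type or loop graph. The main work is bookkeeping. First, match their grading conventions (e.g.\ for $\GC_2$) to ours by shifting by the vertex count. Second, check that their twisting element agrees with our $\nabla=\tfrac12[T,-]$, with the factor $\tfrac12$ absorbed by rescaling. Third, confirm that the inclusion of vertex-valence conditions (at least trivalent, versus all valences) does not change cohomology; the standard result that univalent and bivalent parts are acyclic, up to the loop classes, handles this. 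Filtering by loop order, $d$ preserves it and $\nabla$ lowers it by one, and a spectral sequence argument as in \cite{WillwacherTriconnected} gives the same answer.

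\emph{Step 3 (pairing with wheels).} This is the step I expect to be the main obstacle, because it needs an explicit representative. I would take $\sigma_{2k+1}=\sum_{j\ge0}\gamma_j$ with leading term $\gamma_0=W_{2k+1}^*$ (the dual of the wheel), since the wheel is the unique relevant graph with $2k+1$ loops and $4k+2$ edges among the candidates in the KWZ description. The correction terms $\gamma_j$ have higher loop order, and $\nabla$ adds edges, so they have more edges. They therefore pair to zero with $W_{2k+1}$, and the pairing equals $\langle W_{2k+1}^*,W_{2k+1}\rangle=|\operatorname{Aut}|\neq0$ (Remark~\ref{rem:gc pairing}).

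It remains to justify the choice of leading term. For this I would use the spectral sequence from Step 2: its $E_1$ page is $H(\GC_0,d)$, which contains the nonzero wheel classes, and the surviving class of loop order $2k+1$ must be the dual wheel class. To show it is nonzero one can also argue directly: the wheel is $d$-closed and pairs nontrivially with the cocycle, and this pairing is independent of the chosen representative because $W_{2k+1}$ is a $d$-cycle annihilated by the dual of $\nabla$ in the lowest possible degree.
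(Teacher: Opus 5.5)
The paper only quotes this result, including the wheel pairing, from \cite{KWZ} and \cite[Theorem 4.3]{WillwacherTriconnected}. Your Steps 1--2 are meant as that reduction. However, Step 3 has a genuine gap, and Step 2 rests on a reduction that is false for the deformed differential.

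\textbf{Step 3.} Since $d$ and $\nabla$ both raise the number of edges by exactly one, a homogeneous cocycle of degree $4k+2$ contains only graphs with $4k+2$ edges. Your correction terms therefore have the same edge count and fewer vertices, not more edges. They pair trivially with $W_{2k+1}$ only because they are not isomorphic to it. So everything rests on the coefficient of $W_{2k+1}^*$ being nonzero, and this is never shown:
\begin{itemize}
\item the wheel is not the only graph with these invariants (already for $k=1$, the star with a tadpole at each leaf has $3$ loops and $6$ edges);
\item the $E_1$-term in that bidegree is $\grt_1^{2k+1}$, which is not one-dimensional in general;
\item a priori the surviving class could have its leading term in a lower loop order $\ell<2k+1$, where $H^{4k+2}(\GC_0^{\ell\text{-loop}},d)$ need not vanish.
\end{itemize}
Your ``direct'' argument assumes the very nonvanishing it is supposed to prove.

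A correct route works on the homology side. $W_{2k+1}$ is a $(d+\nabla)$-cycle in $\G_0$. Suppose $W_{2k+1}=(d+\nabla)y$, and split $y=\sum_\ell y_\ell$ by loop order; on $\G_0$, $d$ preserves loop order and $\nabla$ lowers it by one. If the top component has $\ell\geq 2k+2$, it is a $d$-cycle with $4k+3<2\ell$ edges. By the dual of the vanishing in Theorem \ref{thm:Wgrt}, $y_\ell=dz$, and $y-(d+\nabla)z$ has smaller top loop order. Iterating yields $W_{2k+1}=dy_{2k+1}$, contradicting \cite{Ward21}. Hence $[W_{2k+1}]\neq 0$. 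Since $H^{4k+2}(\GC_0,d+\nabla)$ is one-dimensional and degreewise dual to $H_{4k+2}(\G_0,d+\nabla)$, its generator pairs nontrivially with $W_{2k+1}$.

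\textbf{Step 2.} The fact that uni- and bivalent vertices only contribute loop classes holds for $d$, not for $d+\nabla$. Let $\fGCc_0^{\ge 2}$ denote the connected graphs with all vertices at least bivalent.
\begin{itemize}
\item The quotient $\fGCc_0^{\ge2}/\GC_0$ still has $(d+\nabla)$-cohomology spanned by the loop classes $L_{4k+1}$. They have loop order one, so the spectral sequence degenerates.
\item But the connecting map sends $L_{4k+1}$ to $H^{4k+2}(\GC_0,d+\nabla)$, exactly the degree of $\sigma_{2k+1}$, and it is nonzero.
\end{itemize}
Already for $k=1$: on the dual side, $\nabla W_3$ is a nonzero multiple of $W_3$ minus an edge, a graph with two bivalent vertices. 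No degree-$6$ cycle of $\geq 2$-valent graphs can have nonzero $W_3$-coefficient. The only graphs whose contraction could cancel this term are the theta graphs with path lengths $(2,2,2)$ and $(1,2,3)$. The first is zero by symmetry. The second must have zero coefficient, because it is the only graph contributing $\pm L_5$ to $(d+\nabla)z$. Hence $W_3^*$ becomes a coboundary in $\fGCc_0^{\ge 2}$, i.e.\ the connecting map sends $L_5$ to a nonzero multiple of $\sigma_3$.

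So the trivalent answer is not the all-valence one ``up to loop classes''. This interaction is precisely the content of \cite[Theorem 4.3]{WillwacherTriconnected}, which you should cite directly for $\GC_0$.

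Two minor points. In Step 1, no $T$-component arises simply for degree reasons: $T$ is the only graph with one edge. And on $\GC_0$, $\nabla$ raises the loop order rather than lowering it.
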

Note however that the $\sigma_{2k+1}$ are represented by a longer linear combination of graphs, not just the wheel graph.

Next, we may also twist the dual dg Lie coalgebra $\G_0^+$ by the action of $T/2$, which we still denote by $\nabla$.
Explicitly, $\nabla$ acts on a graph $(\gamma,e_1\wedge\cdots\wedge e_k)\in \G_0^+$ by 
\[
\nabla (\gamma,e_1\wedge\cdots\wedge e_k)
=\sum_{\substack{j=1\\\text{$e_j$ no tadpole}}}^k (-1)^j (\gamma-e_j,e_1\wedge\cdots \wedge\hat e_j\wedge \cdots \wedge e_k),
\]
where $\gamma-e_j$ is the graph $\gamma$ with the edge $e_j$ removed, and we set the term to zero if $\gamma-e_j$ is disconnected or has vertices of valence $\leq 2$. 
For example, we have that 
\[
\nabla W_n=0,
\]
since removing any edge in the wheel graph produces a bivalent vertex.
Hence the wheel classes are $(d+\nabla)$-cycles, $(d+\nabla)W_n=0$.
By Theorem \ref{thm:nabla cohom} they pair non-trivially with cocycles in the dual complex and are hence non-trivial in homology $H(\G_0,d+\nabla)$, as long as $n$ is odd.
The dual statement to Theorem \ref{thm:nabla cohom} then reads:
\begin{cor}
We have an isomorphism of graded Lie coalgebras
\[
H(\G_0^+,d+\nabla) \cong \Q t \oplus \bigoplus_{k\geq 1} \Q W_{2k+1}, 
\]
where $t$ is represented by the tadpole graph \eqref{equ:T def}, and the Lie coalgebra structure on the right-hand side is trivial.
\end{cor}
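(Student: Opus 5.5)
The plan is to obtain the corollary by dualizing Theorem \ref{thm:nabla cohom}, working degree by degree. By construction, $(\G_0^+,d+\nabla)$ is the graded dual of the twisted dg Lie algebra $(\GC_0^+,d+\nabla)$. Both $\G_0^+$ and $\GC_0^+$ are finite dimensional in each degree, since the number of edges is fixed and all vertices are at least trivalent apart from the tadpole. Over $\Q$, linear duality is exact on such complexes, so the universal coefficient theorem gives $H(\G_0^+,d+\nabla)\cong H(\GC_0^+,d+\nabla)^*$ degreewise, compatibly with the evaluation pairing. Theorem \ref{thm:nabla cohom} then shows that $H(\G_0^+,d+\nabla)$ has dimension one in degree $1$, dimension one in each degree $4k+2$ for $k\geq1$, and vanishes in all other degrees.

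Next we identify representatives. The tadpole $T\in\G_0^+$ is a cycle: $d$ kills it because contracting a tadpole is excluded and the graph has no other edges, and $\nabla$ kills it because removing its only edge leaves a single vertex, which is set to zero. Its pairing with $T\in\GC_0^+$ is nonzero by Remark \ref{rem:gc pairing}, so it spans degree $1$. For $n=2k+1$, the text preceding the corollary already shows $(d+\nabla)W_n=0$. Theorem \ref{thm:nabla cohom} states that $W_{2k+1}$ pairs nontrivially with $\sigma_{2k+1}$, so its class is nonzero and spans the one-dimensional homology in degree $4k+2$. The degree labels need one check: $W_n$ has $2n=4k+2$ edges, which matches the degree of $\sigma_{2k+1}$. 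This gives the stated isomorphism of graded vector spaces.

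The remaining point is that the induced Lie coalgebra structure is trivial. We expect this to follow from a degree count. The cobracket preserves total degree and loop order, and it can only land in $\bar H\otimes\bar H$. The homology is concentrated in degrees $1$ and $4k+2$ with $k\geq1$. A tensor of two such classes has degree $2$, $4k+3$, or $4(k+l)+4$, and none of these is of the form $1$ or $4m+2$. Hence the cobracket vanishes on homology for degree reasons. Equivalently, this is dual to the Lie bracket on $H(\GC_0^+,d+\nabla)$ vanishing for the same reason. The only step requiring care is confirming that all degrees in both statements are counted with the same edge-number grading, so that the parity argument applies. No real obstacle is expected beyond that bookkeeping.
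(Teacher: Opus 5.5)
Your argument is correct, but you prove the key point, triviality of the cobracket, by a different route than the paper.

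The paper treats the identification of graded vector spaces as immediate from Theorem~\ref{thm:nabla cohom}. Your explicit dualization, and your check that $t$ and $W_{2k+1}$ are nonzero cycles spanning the relevant degrees, fill in that step. The paper then argues on chains. For $t$ the vanishing is clear. Every proper connected subgraph of $W_{2k+1}$ has a vertex of valence $\leq 2$, so it is zero in $\G_0^+$, and hence the cobracket already vanishes on the representing cycles.

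You instead argue on homology by a degree count. You use only that the cobracket preserves the edge grading and puts both tensor factors in positive degree. This needs no knowledge of representatives, only the graded dimensions from Theorem~\ref{thm:nabla cohom}. One small correction: when you say none of $2$, $4k+3$, $4(k+l)+4$ is of the form $4m+2$, you need $m\geq 1$, since $2=4\cdot 0+2$. What matters is that the homology vanishes in degree $2$.

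The paper's chain-level statement gives somewhat more. The span of $t$ and the $W_{2k+1}$ is a sub-dg Lie coalgebra with zero cobracket, and its inclusion into $(\G_0^+,d+\nabla)$ is a quasi-isomorphism. So the isomorphism is realized by an explicit morphism of dg Lie coalgebras, not only on homology.
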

\begin{proof}
    Given Theorem \ref{thm:nabla cohom} it just remains to check that the Lie cobracket on the right-hand side vanishes.
    This is clear for $t$. For the $W_{2k+1}$ note that any proper subgraph of $W_{2k+1}$ has vertices of valence $\leq 2$, so the cobracket is trivial already on chains.
\end{proof}
We also record the analogous statement for the universal enveloping coalgebra.
\begin{cor}\label{cor:KWZ}
We have an isomorphism of Hopf algebras
\[
H(U^c\G_0^+,d+\nabla) \cong \Q[ t, W_3, W_5,\dots], 
\]
where $\Q[\cdots]$ denotes the free graded commutative algebra in the given generators.
\end{cor}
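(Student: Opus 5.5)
The plan is to combine the previous corollary with the standard fact that, in characteristic zero, taking homology commutes with the universal enveloping coalgebra construction. By Section \ref{sec:twisting}, the twisted object $(U^c\G_0^+,d+\nabla)$ coincides with $U^c$ of the twisted dg Lie coalgebra $(\G_0^+,d+\nabla)$, since twisting is compatible with passing to $U^c$. Proposition \ref{prop:U pre co Lie} identifies $U^c\G_0^+$ with $S\G_0^+$ as a graded commutative Hopf algebra. So the underlying dg vector space is $S(\G_0^+)$. I expect to show that its differential is the coderivation/derivation induced from $d+\nabla$ on $\G_0^+$, plus terms involving the cobracket.

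The key step is a filtration argument. Filter $S\G_0^+$ by symmetric word length, using $F_p=\bigoplus_{k\le p}S^k\G_0^+$ or the dual version. On the associated graded, the cobracket terms strictly change word length, so $\gr$ carries only the differential $d+\nabla$ extended as a derivation of $S(\G_0^+)$. By the Künneth theorem and characteristic zero, where coinvariants are exact, we get
\[ H(\gr)\cong S\big(H(\G_0^+,d+\nabla)\big)\cong S\big(\Q t\oplus \textstyle\bigoplus_k \Q W_{2k+1}\big). \]
The filtration is bounded in each degree: graphs have at least one edge, so $S^k$ lives in degree at least $k$. Hence the spectral sequence converges.

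Degeneration is immediate. Each generator $t$ and $W_{2k+1}$ is an honest $(d+\nabla)$-cycle in $\G_0^+$ whose cobracket vanishes already on chains, since every proper subgraph of a wheel has vertices of valence at most $2$, and $t$ has no proper subgraph with edges. Since the coproduct on $U^c$ is built from $\circ^*$, these elements are primitive, and the differential on $U^c$ is a derivation of the free commutative product. It follows that all monomials in them are cycles in $(U^c\G_0^+,d+\nabla)$. Thus the $E_1$ page consists of permanent cycles, the sequence degenerates there, and the algebra map $\Q[t,W_3,W_5,\dots]\to H(U^c\G_0^+,d+\nabla)$ sending generators to their classes is an isomorphism of algebras.

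For the Hopf structure, the generators are primitive, which matches the free commutative Hopf algebra on primitive generators. Graded commutativity forces $t^2=0$ since $t$ has odd degree, so $\Q[\cdots]$ is indeed the free graded commutative algebra. The main point requiring care is confirming that the derivation on $S\G_0^+$ induced by $d+\nabla$ really matches the twisted differential on $U^c$ modulo lower filtration. This is the dual of the PBW statement for $U\fg$, and I would check it directly from formula \eqref{equ:lie on hopf} together with \eqref{equ:Delta c def}.
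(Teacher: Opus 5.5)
Your proposal is correct and follows the same route as the paper, which records this as an immediate consequence of the preceding corollary using the characteristic-zero fact that homology commutes with $U^c$ (PBW); your word-length spectral sequence, which degenerates because $t$ and the $W_{2k+1}$ are primitive chain-level cycles, is a valid way to verify that step here. One correction: only the decreasing filtration $\bigoplus_{k\ge p}S^k\G_0^+$ is preserved, because the terms dropped on the associated graded are not the cobracket terms (those make up $d+\nabla$ on $\G_0^+$ itself) but the components of the twisted differential built from $\circ_k^*$ with $k\ge 2$, such as the bridge deletion $d_b$ of Lemma \ref{lem:d formula}, and these raise word length.
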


We may think of $U^c\G_0^+$ as the quotient of $\fG_0$ obtained by setting to zero all graphs with vertices of valence $\leq 2$, except possibly bivalent vertices on tadpole connected components.
Then the explicit combinatorial formula for $d$ is that of Lemma \ref{lem:d formula} and that for $\nabla$ is \eqref{equ:fG0 nabla}.
The product in $U^c\G_0^+$ then becomes the disjoint union of graphs. For example, $W_3W_5\in U^c\G_0^+$ can be interpreted as the non-connected graph that is the union of $W_3$ and $W_5$.

\subsection{Table of graph complex notation}
\label{sec:gc notation} $ $

\vspace{1em}

\begin{tabular}{c|p{10cm}}
    Symbol & Description \\
    \hline 
    $\fG_0$ & The full graph complex, spanned by oriented, possibly disconnected graphs with vertices of valence $\geq 1$. The empty graph is allowed.
    \\
    $\fGC_0$ & The dual space of $\fG_0$. \\
    $\fGc_0$ & The subspace of $\fG_0$ spanned by non-empty connected graphs.
    \\ 
    $\fGCc_0$ & The dual space of $\fGc_0$. \\
    $\G_0$ & The subspace of $\fGc_0$ spanned by graphs whose vertices have valence $\geq 3$. \\
    $\GC_0$ & The dual space of $\G_0$. \\
    $\G_0^+$ & Obtained from $\G_0$ by adding the tadpole graph $\tadpole$. \\
    $\GC_0^+$ & The dual space of $\G_0^+$. 
\end{tabular}

\section{The map $\Phi$}\label{sec:Phi}
For a graph $\Gamma$ we denote by $C^1(\Gamma;\Z)$ the space of integral cochains of $\Gamma$, i.e. linear combinations of (directed) edges. 
Let 
\[
C^1(\Gamma;\Z) \longrightarrow H^1(\Gamma;\Z) \cong \Z^{\beta_1(\Gamma)}
\]
be the natural quotient map. Now let $(\Gamma,e_1\wedge \cdots \wedge e_k)\in \fG_0$ be a graph with an orientation, i.e. an ordering of edges.
Then, we define a sharbly in $\Sh(H^1(\Gamma;\Z))$ as 
\[
\Phi(\Gamma,e_1\wedge \cdots \wedge e_k) = 
[e_1,\dots,e_k],
\]
where we understand $e_j$ as an element of $C^1(\Gamma;\Z)$ by assigning an arbitrary direction to $e_j$, and we understand it as an element of $H^1(\Gamma;\Z)$ via the map $C^1(\Gamma;\Z) \to H^1(\Gamma;\Z)$.
Passing further to $\GL(H^1(\Gamma;\Z))$-coinvariants then yields a linear map 
\begin{gather*}
    \Phi\colon \fG_0 \longrightarrow \shB \\
    \Gamma \longmapsto [e_1,\dots,e_k].
\end{gather*}

\begin{prop}\label{prop:Phi Hopf map}
    The map 
    \begin{gather*}
        \fG_0 \longrightarrow \shB \\
        (\Gamma,e_1\wedge\cdots\wedge e_k) \longmapsto [e_1,\dots,e_k]
    \end{gather*}
    is a morphism of dg Hopf algebras for $\RR=\Z$.
\end{prop}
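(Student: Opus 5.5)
The plan is to verify directly that $\Phi$ is well defined, compatible with the product, compatible with the coproduct, and compatible with the differential. The only real input is elementary (co)homology of graphs.

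\emph{Well-definedness.} Graph isomorphisms induce isomorphisms of $H^1(\Gamma;\Z)$, so the two representatives become equal after passing to coinvariants. Reordering the edges matches the antisymmetry relation \eqref{equ:sh perm rel}. The arbitrary choice of edge direction is absorbed by the sign relation \eqref{equ:sh sign rel}. The span relation never applies, because the images of the edges generate $C^1\to H^1$ surjectively. We also record two facts for later: a bridge $e$ maps to $0$ in $H^1(\Gamma;\Z)$, since its indicator is $\delta$ of the indicator of the vertices on one side, so nullity kills any graph with a bridge. And a tadpole is a coloop, since it is a free basis vector of $H^1$ not hit by the other edges.

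\emph{Product.} Disjoint union of graphs corresponds to $H^1(\Gamma_1\sqcup\Gamma_2)=H^1(\Gamma_1)\oplus H^1(\Gamma_2)$, with edges landing in the respective summands. This is exactly \eqref{equ:sh prod}, and the empty graph maps to $[\,]$.

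\emph{Coproduct.} This is the main obstacle. Fix $E\subset E(\Gamma)$ and put $I=E(\Gamma)\setminus E$. I will show that the quotient map $\Gamma\to\Gamma/E$, which contracts each component of $\Gamma(E)$ separately, induces an injection $H^1(\Gamma/E;\Z)\hookrightarrow H^1(\Gamma;\Z)$ with saturated image $W$ and cokernel $H^1(\Gamma(E);\Z)$.

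The argument is that $\Gamma/E$ is homotopy equivalent to $\Gamma$ with a separate cone attached on each component of $\Gamma(E)$. Since the cones are attached separately, no new $H_1$ arises, so $H_1(\Gamma/E)=H_1(\Gamma)/H_1(\Gamma(E))$. This quotient is free, since $H_1(\Gamma(E))\subset H_1(\Gamma)$ is a subgroup of cycles, hence saturated. Dualizing gives a short exact sequence $0\to H^1(\Gamma/E)\to H^1(\Gamma)\to H^1(\Gamma(E))\to 0$ of free modules. Under it, edges in $I$ map to their own images, which span $W=\vspan(e_i\mid i\in I)_\sat$, and edges in $E$ map to their images in $V/W=H^1(\Gamma(E))$.

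Hence the term of \eqref{equ:gc coprod} for $E$ maps to the term of \eqref{equ:sh coprod} for this $I$, and the unshuffle signs agree. Each subset $I$ of sharbly vectors comes from exactly one $E$, so the two sums match term by term. Vertices of valence zero created by contraction do not affect $H^1$, so the conventions about removing them are harmless.

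\emph{Differential.} I use Lemma \ref{lem:d formula}. For a non-tadpole edge $e_j$, contraction gives $H^1(\Gamma/e_j)=H^1(\Gamma)$ compatibly with the remaining edges, so $\Phi(\Gamma/e_j)$ is the sharbly with $v_j$ omitted. For a bridge $e_j$, deleting instead of contracting gives the same sharbly: the cohomology and the images of the other edges are unchanged. The sign in \eqref{equ:gc b diff} is opposite to the one in \eqref{equ:gc c diff}, so $\Phi(d_b\Gamma)$ cancels exactly the bridge terms of $\Phi(d_c\Gamma)$. What remains is the sum over edges that are neither tadpoles nor bridges. On the sharbly side, \eqref{equ:d simplified} sums over $v_j\neq0$ that are not coloops. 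Bridges are precisely the edges with $v_j=0$, and among non-bridges the coloops are exactly the tadpoles, since removing a non-tadpole non-bridge edge keeps the span of the others. The two sides therefore agree with the same signs $(-1)^{j-1}$. Antennas are bridges and are excluded from both sides, consistently.
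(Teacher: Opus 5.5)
Your proposal is correct and follows the paper's proof essentially step for step. It checks well-definedness via the sign, antisymmetry and coinvariance relations. It matches the coproduct terms one by one using the short exact sequence $0\to H^1(\Gamma/E;\Z)\to H^1(\Gamma;\Z)\to H^1(\Gamma(E);\Z)\to 0$. For the differential, it cancels the bridge terms of $d_c+d_b$, then identifies bridges with zero vectors and tadpoles with coloops. The only difference is that you justify steps the paper states without proof, namely the coning argument for the exact sequence, the saturation of the image, and the fact that coloops are exactly the tadpoles.
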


\begin{proof}
First, we verify that the map is well-defined. To this end, first note that our arbitrary choice of edge directions does not change the sharbly by the relation \eqref{equ:sh sign rel}.
Second, we clearly have 
\[
\Phi(\Gamma,e_{\sigma(1)}\wedge\cdots\wedge e_{\sigma(k)})
=
(-1)^\sigma
\Phi(\Gamma,e_{1}\wedge\cdots\wedge e_{k})
\]
 by \eqref{equ:sh perm rel}, so that the graph complex relation \eqref{equ:gc perm rel} is respected.
Finally, let $\phi\colon\Gamma\to \Gamma'$ be an isomorphism and $\phi^*\colon H^1(\Gamma')\to H^1(\Gamma)$ be the induced map on degree 1 cohomology.
Then, we have that in $\shB$, due to taking $\GL$-coinvariants
\begin{align*}
    \Phi(\Gamma',\phi(e_1)\wedge\cdots\wedge \phi(e_{k})) 
    &= 
    [\phi(e_1),\dots, \phi(e_{k})]
    =
    [\phi^* \phi(e_1),\dots, \phi^* \phi(e_{k})] 
    \\&=
    [e_1,\dots,e_k] 
    =
    \Phi(\Gamma,e_{1}\wedge\cdots\wedge e_{k}).
\end{align*}
Hence, the map $\Phi$ is well defined.

Compatibility with the product is obvious.

For compatibility with the coproduct fix a graph $(\Gamma,e_1\wedge\cdots\wedge e_k)\in \fG_0$. Fix a subset $I=\{i_1,\dots, i_m\}$ of the edge indices with $1\leq i_1<\cdots <i_m\leq k$ and let $J=\{1,\dots,k\}\setminus I=\{j_1,\dots,j_{k-m}\}$ be its complement. Let $E=\{e_{j_1},\dots,e_{j_{k-m}}\}\subset E(\Gamma)$ be the corresponding set of edges of $\Gamma$. Then, we compare the two coproduct formulas \eqref{equ:gc coprod} and \eqref{equ:sh coprod}, and we claim that the summand corresponding to $E$ in \eqref{equ:gc coprod} matches the corresponding summand for $I$ in \eqref{equ:sh coprod} via $\Phi$. 
First, note that there is a short exact sequence
\begin{align}\label{equ:h1 exact}
    0\longrightarrow H^1(\Gamma/E;\Z) \longrightarrow H^1(\Gamma;\Z) \longrightarrow H^1(\Gamma(E);\Z) \longrightarrow 0 .
\end{align}
Now the $E$-term of \eqref{equ:gc coprod} is mapped via $\Phi\otimes \Phi$ to
\begin{equation}\label{equ:PhiPhi1}
\pm
\Phi(\Gamma/E,o') \otimes \Phi(\Gamma(E),o'') 
=
\pm 
\underbrace{[e_{i_1}, \dots , e_{i_{m}}]}_{\in \Sh(H^1(\Gamma/E;\Z))}
\otimes 
\underbrace{[e_{j_1}, \dots , e_{j_{k-m}}]}_{\in \Sh(H^1(\Gamma(E);\Z))},
\end{equation}
where the sign is the sign of the corresponding unshuffle permutation.
Next, we compare this to the $I$-summand in the coproduct \eqref{equ:sh coprod} of $\Phi(\Gamma,e_1\wedge\cdots\wedge e_k)$, namely
\begin{equation}\label{equ:PhiPhi2}
\pm 
\underbrace{[e_{i_1}, \dots , e_{i_{m}}]}_{\in\Sh(\vspan(e_{i_1}, \dots,e_{i_{m}})_{\sat})}
\otimes 
\underbrace{[\bar e_{j_1}, \dots , \bar e_{j_{k-m}}]}_{\in \Sh(H^1(\Gamma;\Z)/ \vspan(e_{i_1}, \dots,e_{i_{m}})_{\sat} )}.
\end{equation}
The sign is again the sign of the unshuffle permutation and hence agrees with that of \eqref{equ:PhiPhi1}.
The first factor of \eqref{equ:PhiPhi2} takes values in $\Sh(\vspan(e_{i_1}, \dots,e_{i_{m}})_{\sat})$, with $\vspan(e_{i_1}, \dots,e_{i_{m}})_{\sat}\subset H^1(\Gamma;\Z)$. 
Note that this is the same as $H^1(\Gamma/E;\Z)$.
Similarly, the second factor of \eqref{equ:PhiPhi2} takes values in $\Sh(H^1(\Gamma;\Z)/ \vspan(e_{i_1}, \dots,e_{i_{m}})_{\sat} )$, and we have $H^1(\Gamma;\Z)/ \vspan(e_{i_1}, \dots,e_{i_{m}})_{\sat}\cong H^1(\Gamma(E);\Z)$.
Hence \eqref{equ:PhiPhi1} and \eqref{equ:PhiPhi2} agree, as desired.

Finally, we check that the differential is respected by the map $\Phi$.
To this end, recall the defining formulas \eqref{equ:gc c diff}, \eqref{equ:gc b diff} and \eqref{equ:sh diff}.
We first have 
\begin{align*}
&\Phi((d_c+d_b)(\Gamma,e_1\wedge\cdots\wedge e_k))\\
&=
\sum_{\substack{j=1\\ e_j\text{ non-antenna,}\\ \text{non-tadpole}}}^k (-1)^{j-1}
[e_1,\dots,\hat e_j,\dots, e_{k}]
-\sum_{\substack{j=1\\ e_j\text{ bridge,} \\ \text{non-antenna}}}^k (-1)^{j-1}
[e_1,\dots,\hat e_j,\dots, e_{k}]
\\
&=
\sum_{\substack{j=1\\ e_j\text{ non-bridge,}\\ \text{non-tadpole}}}^k
(-1)^{j-1}
[e_1,\dots,\hat e_j,\dots, e_{k}].
\end{align*}
Here, we used that deleting a bridge edge and contracting a bridge edge has precisely the same effect after applying $\Phi$.
Compare this expression to 
\[
d\Phi(\Gamma,e_1\wedge\cdots\wedge e_k)
=
\sum_{j=1\atop e_j\neq 0}^k (-1)^{j-1}
[e_1,\dots,\hat e_j,\dots, e_{k}].
\]
Note that the image of an edge $e$ in cohomology is zero if and only if it is not part of any cycle in the graph, and this is true if and only if it is a bridge. Finally, the tadpoles in the graph correspond to coloops in its image under $\Phi$, whose deletion produces the zero sharbly. Hence both formulas above agree and we are done. 
\end{proof}

\subsection{Proof of Theorem \ref{thm:main 1}}
Given Proposition \ref{prop:Phi Hopf map}, we will finish the proof of Theorem \ref{thm:main 1} by showing the following two lemmas.

\begin{lemma}
    The map $\Phi$ of Proposition \ref{prop:Phi Hopf map} factors through the universal enveloping coalgebra of the quotient $\G_0^+ \twoheadleftarrow \fGc_0$.
    \[  
    \begin{tikzcd}
        U^c \fGc_0 \ar{r}{\Phi} \ar{d} & \shB \\
        U^c \G_0^+ \ar{ur}
    \end{tikzcd}.
    \]
\end{lemma}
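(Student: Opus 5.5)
The plan is to identify the kernel of the quotient $U^c\fGc_0 \cong \fG_0 \twoheadrightarrow U^c\G_0^+$ and show that $\Phi$ vanishes on it. By the description given after Corollary \ref{cor:KWZ}, $U^c\G_0^+$ is the quotient of $\fG_0$ by the span $K$ of all (possibly disconnected) graphs having a vertex of valence $\leq 2$. The one exception is a bivalent vertex lying on a tadpole connected component. Since $U^c$ of a quotient dg Lie coalgebra is the quotient Hopf algebra, and $\Phi$ is already a morphism of dg Hopf algebras by Proposition \ref{prop:Phi Hopf map}, it suffices to show $\Phi(K)=0$. The induced map is then automatically a morphism of dg Hopf algebras, because $K$ is a Hopf ideal and a dg ideal (it is the kernel of a dg Hopf map).

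I would check this by cases on the offending vertex $v$ of $\Gamma$.

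\emph{Case 1: $v$ is univalent.} The edge at $v$ is an antenna, hence a bridge. Its image in $H^1(\Gamma;\Z)$ is therefore zero, and $[e_1,\dots,e_k]=0$ by the nullity relation \eqref{equ:sh null rel}.

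\emph{Case 2: $v$ is bivalent with a single tadpole edge.} Then the connected component of $v$ is exactly the tadpole graph, which is the allowed exception.

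\emph{Case 3: $v$ is bivalent with two distinct edges $e_a,e_b$.} If either edge is a bridge, we are done by nullity as in Case 1. Otherwise, choose directions of $e_a$ and $e_b$ both pointing into $v$. The coboundary of the vertex cochain $\delta_v$ is then $e_a+e_b$, so in $H^1(\Gamma;\Z)$ we get $e_b=-e_a$. The sign relation \eqref{equ:sh sign rel} lets us replace $-e_a$ by $e_a$, so the sharbly contains the same vector twice. Antisymmetry \eqref{equ:sh perm rel}, applied with the transposition of these two slots, gives $x=-x$, hence $x=0$ over $\Q$. The same argument also covers graphs with multiple edges, but those already vanish in $\fG_0$.

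Zero-valent vertices never occur, since they are excluded from $\fG_0$. So only valences $1$ and $2$ need treatment.

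I expect the only delicate point to be the bookkeeping of the quotient, not the sharbly computation. One has to make sure that the kernel of $\fG_0 \to U^c\G_0^+$ is spanned exactly by graphs (disconnected ones included) having at least one component with a vertex of valence $\leq 2$, other than a tadpole component. Equivalently, in $S\fGc_0 \to S\G_0^+$ the kernel is the ideal generated by the kernel of $\fGc_0\to\G_0^+$. This holds because $S$ of a surjection has kernel the ideal generated by the kernel, and because the Hopf structure of Proposition \ref{prop:U pre co Lie} is natural for pre-Lie coalgebra quotients. Since $\Phi$ is multiplicative, it also suffices to check vanishing on connected generators, which is what the case analysis above does.
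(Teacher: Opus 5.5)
Your proposal is correct and follows the paper's proof. The case split is the same: a univalent vertex gives an edge that is a coboundary, hence a zero column killed by nullity, and a bivalent vertex is either the tadpole exception or gives two columns equal up to sign, killed by the sign and antisymmetry relations. Your extra bookkeeping, that the kernel of $\fG_0\to U^c\G_0^+$ is the (dg Hopf) ideal generated by the connected kernel so that vanishing there suffices, is a correct elaboration of what the paper takes for granted from its description of $U^c\G_0^+$ as a quotient of $\fG_0$.
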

We will use the letter $\Phi$ to denote the map $\Phi\colon U^c \G_0^+\to \shB$ as well.
\begin{proof}
    We have to check that $\Phi$ vanishes on all graphs that have univalent or bivalent vertices, except for the case that the bivalent vertices are those of connected components of the form $t=\tadpole$. 

    First assume the graph $\Gamma$ has a univalent vertex $v$. Then the unique edge attached to $v$ always maps to zero under the map $C^1(\Gamma;\Z)\to H^1(\Gamma;\Z)$ since it is the coboundary of $v$. Hence the sharbly $\Phi(\Gamma)$ has a zero column, and thus vanishes by \eqref{equ:sh null rel}. 
    
    Next assume that $\Gamma$ has a bivalent vertex $v$.
    If its two incident half-edges are part of the same edge, then the connected component of $v$ is just $\tadpole$.
    Otherwise, the bivalent vertex is connected to two different edges.
    \[
\begin{tikzpicture}
    \node[int,label=90:{$v$}] (v) at (0,0) {};
    \draw (v) edge node[above]{$e$} +(-1,-.5) edge node[above]{$e'$} +(1,-.5) ;
\end{tikzpicture}
    \]
    The cohomology classes in $H^1(\Gamma;\Z)$ defined by $e,e'$ agree up to sign.
    Hence the sharbly $\Phi(\Gamma,o)$ has two columns that are identical up to sign. Thus $\Phi(\Gamma,o)=0$ by the sign and permutation relations \eqref{equ:sh perm rel}, \eqref{equ:sh sign rel}.
\end{proof}

The following lemma was proven with Brown--Schnetz \cite[Theorem 4(i)]{BrownSchnetz} as its main ingredient.

\begin{lemma}[{Ash--Miller--Patzt \cite[Lemma 4.1]{AMP}}]\label{lem:wheelnonzero}
    For $n\geq 3$ odd let $W_{n}$ be the wheel cycle with $n$ spokes and hence $2n$ edges.
    Then $\Phi(W_n)\in \shB$ represents a non-trivial homology class. 
\end{lemma}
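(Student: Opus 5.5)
To show that $\Phi(W_n)$ is a non-trivial class, I will pair it with an explicit cocycle on $\Sh(\Z^n)_{\GL(n,\Z)}$ and show the pairing is non-zero. Note first that $\Phi(W_n)$ is a cycle: by Proposition \ref{prop:Phi Hopf map}, $d\Phi(W_n)=\Phi(dW_n)=0$. The wheel has first Betti number $n$, so $\Phi(W_n)$ lies in $\Sh(\Z^n)$ and has length $2n$. By Theorem \ref{thm:AMP} it therefore defines a class in $H_{2n}(\Sh(\Z^n)_{\GL(n,\Z)})\cong H_n(\GL(n,\Z);\St_n\otimes\Q)$. By Borel--Serre duality this group is $H^{\binom n2-n}(\GL(n,\Z);\Q)$ for $n$ odd.

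The cocycle I will use is the canonical invariant differential form of Brown (the ``canonical forms'' $\omega^{4k+1}$ on the space of positive definite matrices), which give classes in $H^{4k+1}(\GL(n,\Z);\R)$ for $n=2k+1$. Concretely, I will define a linear functional on sharblies of length $2n$ in $\Sh(\Z^n)$ by integrating the form over a simplex. The sharbly $[v_1,\dots,v_{2n}]$ determines the simplex in the projectivized cone of positive semidefinite forms spanned by the rank-one forms $v_iv_i^T$; the functional integrates $\omega^{4k+1}$ over this simplex of dimension $2n-1=4k+1$. This functional is $\GL(n,\Z)$-invariant and respects the sharbly relations.

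To see that it is a cocycle, i.e.\ vanishes on boundaries $d[v_1,\dots,v_{2n+1}]$, I will use Stokes together with closedness of the form. The boundary terms that do not cancel are faces lying at infinity (the degenerate locus where the vectors do not span). These terms vanish because the canonical form restricts to zero on such faces, or extends over the Borel--Serre type compactification. This is the technical heart of the construction.

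The evaluation on $\Phi(W_n)$ is exactly the integral of the canonical form over the graph cell $\sigma_{W_n}$ of the wheel in Outer space/tropical moduli. Brown--Schnetz \cite[Theorem 4(i)]{BrownSchnetz} identify this integral with a non-zero multiple of $\zeta(n)$. Since the evaluation is non-zero, the class of $\Phi(W_n)$ is non-trivial.

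The main obstacle is the well-definedness and convergence of the integral on arbitrary sharblies, together with the boundary vanishing needed for the cocycle property. I would take these from Brown's results on the canonical forms and Outer space compactifications rather than reprove them. An alternative route would be to use Goncharov's Grassmannian polylogarithm, as in Remark \ref{rem:wheel indep}.
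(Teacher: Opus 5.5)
Your outline is sound, but it is not how the paper proves this lemma. The paper only cites AMP, whose argument, like yours, rests on Brown--Schnetz \cite[Theorem 4(i)]{BrownSchnetz}. Its own self-contained proof is in Remark~\ref{rem:wheel indep}, and it pairs $\Phi(W_n)$ with the Goncharov cocycle $\alpha_{\mathrm G}$ instead of a canonical form. In that route every step reduces to a published theorem:
\begin{itemize}
\item $\alpha_{\mathrm G}\circ d=0$ is Goncharov's functional equation, Theorem~\ref{thm:Goncharov}(2).
\item The span relation holds because the integrand is then pulled back from $\CP^{n-2}$.
\item $\alpha_{\mathrm G}(\Phi(W_n))\neq 0$ is Goncharov's evaluation, Proposition~\ref{prop:Goncharov nonvanishing}, since the columns of $\Phi(W_n)$ cut out the hyperplanes $z_i=0$ and $z_i=z_{i+1}$.
\end{itemize}
No bordification is needed. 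The same cocycle also satisfies $\alpha_{\mathrm G}\circ D=0$, which the paper uses later in Corollary~\ref{cor:Wn def injective}.

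In your route the evaluation step is fine. The columns of $\Phi(W_n)$ are the images of the edges in $H^1(W_n;\Z)$, so the simplex spanned by the $v_iv_i^T$ is exactly the wheel cell $\sigma_{W_n}$, and Brown--Schnetz give a nonzero multiple of $\zeta(n)$. The step you defer, however, is the whole difficulty:
\begin{itemize}
\item For an arbitrary spanning tuple, the simplex meets the rank-deficient locus of the positive semidefinite cone along every non-spanning sub-simplex, not only along the non-spanning facets.
\item So even convergence of a single integral, let alone the Stokes argument, requires blowing up these strata.
\item You must then show the exceptional contributions vanish, via primitivity of the canonical forms on the boundary strata of Brown's bordification.
\end{itemize}
Brown's results are formulated for a different complex of chains, so this step is an adaptation of his work rather than a citation. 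What your route buys, if completed, is a cocycle tied directly to Borel's regulator forms and to Brown's computations of $\zeta$-values.

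One correction. The ordinary class of $\omega^{4k+1}$ in $H^{4k+1}(\GL(2k+1,\Z);\mathbb{R})$ is not what your argument uses. For $k=1$ that group is zero, since $\GL(3,\Z)$ has virtual cohomological dimension $3$. Your functional pairs with $H_n(\GL(n,\Z);\St_n\otimes\Q)$, i.e.\ with locally finite homology of the locally symmetric space, so it behaves like a compactly supported class. This is why closedness of $\omega$ alone does not make it a cocycle, and why the boundary analysis is essential.
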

We provide an independent proof of this result in Remark \ref{rem:wheel indep} below.

\section{Deformed differentials}

%\subsection{Deformed differential on the graph complex}
%The tadpole element $T\in \GC_0^+$ of \eqref{equ:T def} is a Maurer--Cartan element. It follows that one can twist the differential on $\GC_0^+$ or $\G_0^+$ by $T$, i.e., we add the action $\nabla$ by $T$ to the differential
%\[
%d\to d+\nabla.
%\]
%Then we still have $(d+\nabla)^2=0$, and the differential $d+\nabla$ is still a (co)derivation with respect to the Lie (co)algebra structure.
%Precisely, on $\GC_0^+$ the action is just the bracket with $T$, $\nabla=[T,-]$, while on $\G_0^+$ the action is by taking the cobracket and then contracting the first slot with $T$, i.e., in Sweedler notation:
%\[
%\nabla x = \sum T(x') x''-(-1)^{|x'||x''|}T(x'')x'.
%\]
%The operation $\Delta$ on the Lie coalgebra then induces an operation $\nabla:U^c\G_0^+\to U^c\G_0^+$ on the universal enveloping coalgebra.
%Concretely, if we think of $U^c\G_0^+$ as a complex of possibly non-connected graphs, then $\nabla$ sums over all ways of removing one non-bridge edge. In particular, the application of $\nabla$ reduces the first Betti  number of the graph by exactly one.

\subsection{Deformed differential on sharblies and first part of Theorem \ref{thm:main 2}}\label{sec:deformed D}
We consider the dg Lie algebras dual to indecomposables $\fg_{\shB}$ and $\GC_0^+=\fg_{U^c\G_0^+}$, see \eqref{equ:fgA def}.
Our Hopf algebra map $\Phi\colon U^c\G_0^+\to \shB$ induces a dual map of dg Lie algebras
\[
\Phi^* \!\colon \fg_{\shB} \longrightarrow \GC_0^+.
\]
We now define a Maurer--Cartan element $\eta\in \fg_{\shB}$, as the linear map $\eta\colon \shB\to\Q$ that projects any $x\in \shB$ to the sum of the coefficients of the non-zero sharblies of the form $[a]\in \Sh(\Z)_{\GL(1,\Z)}$ in $x$.

\begin{lemma}\label{lem:eta}
    The element $\eta \in\shB^*$ satisfies the following properties:
    \begin{enumerate}[(i)]
    \item It is in $\fg_\shB$, i.e., \eqref{equ:coprimitive} holds.
    \item It satisfies $d\eta=[\eta,\eta]=0$. In particular, it is a Maurer--Cartan element in the dg Lie algebra $\fg_\shB$.
    \item We have $\Phi^*(\eta)=\frac12 T\in \GC_0^+$.
    \end{enumerate}
\end{lemma}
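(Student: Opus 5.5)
We check the three properties directly from the definition of $\eta$: it is supported on the rank-one, length-one part $\Sh(\Z)_{\GL(1,\Z)}$. There, by the sign relation, every nonzero sharbly $[a]$ with $a\neq 0$ equals $[|a|]$. Even though $\GL(1,\Z)=\{\pm1\}$ does not identify $[a]$ with $[b]$ for $|a|\neq|b|$, $\eta$ takes the value $1$ on each of them.

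For (i), we verify \eqref{equ:coprimitive}. Since $\eta(1)=0$ (the empty sharbly has rank $0$), it suffices to take $x,y$ homogeneous in rank and length. The product $xy$ lies in rank $\rk x+\rk y$ and length $|x|+|y|$. If $\eta(xy)\neq 0$, then $xy$ has rank $1$ and length $1$, so one factor has rank $0$ and length $0$, i.e. it is a multiple of $[\,]$. The counit $\epsilon$ is exactly the coefficient of $[\,]$. Hence both sides reduce to the same term; for instance, for $x=c[\,]$ the left side is $c\,\eta(y)$ and so is the right side. If neither factor is a multiple of $[\,]$, both sides vanish. Signs are trivial here, since the relevant degrees are $0$.

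For (ii), the definition gives $(d\eta)(x)=\pm\eta(dx)$. The only way $dx$ can contain a term in $\Sh(\Z)$ of length $1$ is for $x$ to be a length-$2$ sharbly $[a,b]\in\Sh(\Z)$. Then $dx=[b]-[a]$, and $\eta$ evaluates this to $1-1=0$; this also covers the case where $a$ or $b$ is zero, since then $x=0$. For the bracket, we have $[\eta,\eta](x)=\pm(\eta\otimes\eta)(\Delta x\mp\tau\Delta x)$. A nonzero contribution requires a term in $\Sh(W)\otimes\Sh(V/W)$ with both factors rank $1$ and length $1$. So $x=[a,b]\in\Sh(\Z^2)$, where $a,b$ span $\Q^2$. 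The coproduct terms are $[a]\otimes[\bar b]$ and $-[b]\otimes[\bar a]$ (unshuffle sign), each of which $\eta\otimes\eta$ sends to $\pm1$. Combined with the commutator and Koszul signs (both $\eta$ have degree $-1$), these cancel. Here I would carefully track the sign conventions of the commutator on $\fg_A$; alternatively, note that $[\eta,\eta]$ is a symmetric expression in odd elements which must be checked to vanish on the antisymmetric $[a,b]$. This sign bookkeeping is the main obstacle, and I would first try just writing out both orderings explicitly.

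For (iii), $\Phi^*(\eta)=\eta\circ\Phi$ evaluated on a graph $\Gamma$ in $U^c\G_0^+$ is nonzero only if $\Phi(\Gamma)$ has rank $1$ and length $1$, i.e. $\Gamma$ has one edge and $\beta_1(\Gamma)=1$. This forces $\Gamma=T$, with $\Phi(T)=[1]$, so $\eta(\Phi(T))=1$. The pairing convention of Remark \ref{rem:gc pairing} pairs $T$ with itself as $|\operatorname{Aut}(T)|=2$, since the tadpole edge can be flipped. So the functional $\Gamma\mapsto\delta_{\Gamma,T}$ corresponds to $\frac12 T\in\GC_0^+$, giving $\Phi^*(\eta)=\frac12T$.
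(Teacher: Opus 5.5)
Your proposal is correct and takes essentially the same route as the paper: (i) by rank/length counting, (ii) by checking $\eta\circ d=0$ on $[a,b]\in\Sh(\Z)$ and $(\eta\otimes\eta)\Delta=0$ on $[a,b]\in\Sh(\Z^2)$, and (iii) because $T$ is the only rank-one graph and $|\operatorname{Aut}(T)|=2$. The sign bookkeeping you flag as the main obstacle is harmless: $[a]\otimes[\bar b]$ and $-[b]\otimes[\bar a]$ receive the same Koszul sign under $\eta\otimes\eta$, so they cancel by the unshuffle sign alone and $(\eta\otimes\eta)\Delta[a,b]=0$ already before any antisymmetrization; for odd $\eta$ the commutator only doubles this, $[\eta,\eta]=\pm 2(\eta\otimes\eta)\Delta$, so your ``symmetric expression'' remark is not what makes it vanish.
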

\begin{proof}
    The first assertion is true for degree reasons. 
    For the second, the Maurer--Cartan equation reads 
    \[
    \eta d + (\eta \otimes \eta)\Delta = 0.
    \]
    This in fact follows from $\eta d=0$ and $(\eta \otimes \eta) \Delta = 0$ separately vanishing.
    We check
    \[
    \eta d [a,b]= \eta ([b]-[a])=0,
    \]
    using that we must have $a,b\neq 0$ by \eqref{equ:sh null rel}.
    Similarly, let $v_1,v_2\in \Z^2$ be two vectors that span $\Q^2$. Then
    % We may assume $v_1=\begin{pmatrix}a\\0\end{pmatrix}$, $v_2=\begin{pmatrix}b\\c\end{pmatrix}$ with $a,c\neq 0$. Then we have, with $g:=gcd(b,c)$
    \[
    (\eta \otimes \eta) \Delta [v_1,v_2]
    =
    (\eta \otimes \eta)([v_1]\otimes [\bar v_2]-[v_2]\otimes [\bar v_1])=-1+1=0.
    \]
    For the last assertion, note that $\eta$ is supported in rank 1, but the only graph $\Gamma\in \GC_0^+$ of rank 1 is $\Gamma=T$.
    It is clear that $\eta(\Phi(t))=1$, where $t\in \G_0^+$ is the tadpole graph. (I.e., $t$ is equal to $T$, but interpreted as an element of the dual graph complex). Hence $\Phi^*(\eta)=\frac 12T$, see Remark \ref{rem:gc pairing}. 
\end{proof}

Given the Maurer--Cartan element $\eta$, we may twist the differential on $\shB$ by its action, see Section \ref{sec:Hopf twisting}.
That is, we consider $\shB$ with the differential $d+D$ with 
\[
Dx=-\sum (\eta(x')x''- (-1)^{|x'||x''|}\eta(x'')x'),
\]
using again Sweedler notation for the coproduct.
Let us explicitly state the action of $D$ on some non-zero sharbly $x=[v_1,\dots,v_N]\in \Sh(\Z^n)$.
\begin{equation}\label{equ:D formula}
\begin{aligned}
D [v_1,\dots,v_N]
&= 
-\sum_{j=1}^N
(-1)^{j-1}
[v_1,\dots,\hat v_j,\dots,v_N]_{\Z^n/(\Z v_j)_{\sat}}
\\&\quad \quad+
\sum_{\substack{ j=1 \\ \vspan_\Q(v_1,\dots,\hat v_j,\dots,v_N)\neq \Q^n } }^N
(-1)^{j-1}
[v_1,\dots,\hat v_j,\dots,v_N]_{\vspan(v_1,\dots,\hat v_j,\dots,v_N)_{\sat}}
\\
&= D_1 x + D_2x.
\end{aligned}
\end{equation}
Here we denoted the terms of the first sum by $D_1 x$ and those of the second sum by $D_2x$. 

Since $\Phi^*\eta=\frac12 T$, we conclude that automatically $\Phi$ is a morphism of dg Hopf algebras with the deformed differentials
\[
\Phi \colon (U^c\G_0^+,d+\nabla) \longrightarrow (\shB, d+D),
\]
see again Section \ref{sec:Hopf twisting}.
This already shows the first part of Theorem \ref{thm:main 2}.

\begin{rem}
    We remark that the construction of the deformed differential $d+D$ on $\shB$ does not rely on the fact that $\RR=\Z$ and would work for any PID $\RR$ in the same way, and yields a Hopf algebra $(\shB_{\RR},d+D)$ and a morphism of dg Hopf algebras $(U^c\G_0^+,d+\nabla) \to (\shB_\RR, d+D)$. 
\end{rem}
\begin{rem}\label{rem:coloop cancellation}
    For $\RR=F$ a field, in particular $\RR=\Q$, the formula for $D$ slightly simplifies in that coloop terms cancel between $D_1$ and $D_2$ after taking $\GL(n,\RR)$-coinvariants. Namely, suppose that $v_j$ is a coloop in \eqref{equ:D formula}. Then we may always assume that in suitable coordinates $v_j=(0,\dots,0,1)$ and all other $v_i\in F^{n-1}\subset F^{n}$. So it is clear that (up to ambient isomorphism)
    $[v_1,\dots,\hat v_j,\dots,v_N]_{F^n/(F v_j)}$ and $[v_1,\dots,\hat v_j,\dots,v_N]_{\vspan(v_1,\dots,\hat v_j,\dots,v_N)}$ are the same sharbly and hence cancel due to the opposite signs in \eqref{equ:D formula}. The resulting simplified formula for $D$ on $\shB_F$ is then 
    \begin{equation}\label{equ:D simplified}
    D [v_1,\dots,v_N]
    = 
    -
    \sum_{j=1\atop v_j\text{ no coloop}}^N
    (-1)^{j-1}
    [v_1,\dots,\hat v_j,\dots,v_N]_{F^n/F v_j}.
    \end{equation}
    
    The analogous cancelation of the coloop terms in $D_1$ and $D_2$ does not happen for integral sharblies. For example,
    \begin{align*}
    D_1 [(2,0), (1,2)] &=-[2] + [4] 
    &
    D_2 [(2,0), (1,2)] &=[1] - [2],
    \end{align*}
    while all vectors involved are coloops.
\end{rem}

\subsection{The Gale duality morphism}
\label{sec:gale}
Let $V$ be a $\Q$-vector space of rank $n=\vdim V$ and let $x=(v_1,\dots,v_k)$  
be a general tuple of vectors $v_j\in V$.
We may consider $x$ as (the matrix of) a linear map $\phi_x\colon \Q^k\to V$. Denote by $\phi_x^* \colon V^*\to (\Q^k)^*$ the dual map.
Then we define the Gale-dual $\Q$-vector space 
\[
V^G_x := \coker \phi_x^* = (\Q^k)^* / \im \phi_x^* \cong (\ker \phi_x)^*.
\]
Furthermore, let $e_j^*$ ($j=1,\dots,k$) be the standard basis vectors of $(\Q^k)^*$ and let $w_j$ be the image of $e_j^*$ in $V^G_x$.
Then we define the Gale-dual tuple to $x$ to be 
\[
G(x) := (w_1,\dots,w_k).
\]
The thus defined Gale duality $(x,V) \mapsto (G(x), V_x^G)$ has the following properties.

\begin{lemma}\label{lem:G properties}
    Let $V$ be a $\Q$-vector space of finite dimension $n$, and let $x=(v_1,\dots,v_k)$ be a tuple of vectors in $V$ that span $V$.
    \begin{enumerate}
        \item Let $\lambda_1,\dots,\lambda_k\in \Q^\times$ and $x'=(\lambda_1v_1,\dots,\lambda_kv_k)$. Then $V_x^G\cong V_{x'}^G$ and $G(x') = (\lambda_1^{-1}w_1, \dots,\lambda_k^{-1}w_k)$ if $G(x) = (w_1,\dots, w_k)$.
        \item For any permutation $\sigma\in S_k$ let 
        $x'=(v_{\sigma(1)},\dots,v_{\sigma(k)})$. 
        Then the action of $S_k$ on $(\Q^k)^*$ induces an isomorphism $\sigma\colon V_x^G \to V_{x'}^G$ and we have that $G(x')=\sigma G(x)$.
        \item For $A\in \GL(V)$ let $x'=Ax$. Then $V_{x'}^G=V_x^G$ and $G(x')=G(x)$.
        \item Let $G(x)=(w_1,\dots,w_k)$. Then $w_1,\dots,w_k$ span $V_x^G$. 
        \item %Assume that the $v_1,\dots,v_k$ span $V$.
        We have $(V_x^G)^G_{G(x)}\cong V$, with a natural isomorphism induced by $x$. 
        Under this isomorphism $G(G(x))=x$.
        \item $v_j$ is a coloop of $x$ (resp. $v_j=0$) if and only if $w_j=0$ (resp. $w_j$ is a coloop of $G(x)$).
    \end{enumerate}
\end{lemma}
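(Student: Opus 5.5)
The plan is to work throughout with the exact sequence
\[
0\longrightarrow V^* \xrightarrow{\ \phi_x^*\ } (\Q^k)^* \longrightarrow V^G_x \longrightarrow 0,
\]
which is exact on the left because $x$ spans $V$, so $\phi_x$ is surjective and $\phi_x^*$ injective. Dualizing identifies $V^G_x\cong(\ker\phi_x)^*$. Under this identification $w_j$ is the restriction of the coordinate functional $e_j^*$ to $\ker\phi_x\subset\Q^k$, the space of linear relations $\sum a_jv_j=0$. Each item then reduces to a statement about these relations.

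For (1) I would use the diagonal map $\Lambda=\mathrm{diag}(\lambda_j)$ on $\Q^k$. It satisfies $\phi_{x'}=\phi_x\circ\Lambda$, so $\ker\phi_{x'}=\Lambda^{-1}\ker\phi_x$. Transporting $e_j^*$ along this isomorphism gives the factor $\lambda_j^{-1}$, since $e_j^*(\Lambda^{-1}a)=\lambda_j^{-1}a_j$. Item (2) is the same argument with a permutation matrix in place of $\Lambda$. For (3), $\phi_{Ax}=A\phi_x$ has the same kernel, so $V^G$ and the $w_j$ are literally unchanged. Item (4) holds because the $e_j^*$ span $(\Q^k)^*$ and the quotient map is surjective.

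For (5), apply the construction to $G(x)$. Its associated map $\Q^k\to V^G_x$ is the quotient map $q$, and $\ker q=\im\phi_x^*\cong V^*$. Hence $(V^G_x)^G_{G(x)}\cong(\ker q)^*\cong V^{**}\cong V$, which is an explicit natural isomorphism induced by $x$. The main obstacle is checking that under this identification the $j$-th Gale dual vector is $v_j$. This is bookkeeping: the functional $e_j^*$ restricted to $\im\phi_x^*$ sends $\phi_x^*(\xi)$ to $\xi(v_j)$, so it corresponds to $v_j\in V^{**}$. Finally, $G(G(x))$ is defined since $G(x)$ spans by (4).

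For (6):
\begin{itemize}
\item $w_j=0$ iff $e_j^*$ vanishes on $\ker\phi_x$, i.e.\ no relation involves $v_j$.
\item That holds iff $v_j\notin\vspan(v_i\mid i\neq j)$, i.e.\ $v_j$ is a coloop. Here I would use that $x$ spans, so removing $v_j$ drops the span exactly when $v_j$ is independent of the others.
\end{itemize}
The second equivalence follows by applying the first to $G(x)$ and using (5): $w_j$ is a coloop of $G(x)$ iff the $j$-th vector of $G(G(x))=x$ vanishes, i.e.\ $v_j=0$.
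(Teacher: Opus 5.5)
Your proposal is correct and follows essentially the same route as the paper. Items (1)--(4) are the routine checks the paper calls obvious, and (5) is the same double-dual identification: you pass through $V^{**}$, where the paper uses the coimage $\Q^k/\ker\phi_x\cong V$. Item (6) is the same equivalence, phrased via linear relations in $\ker\phi_x$ rather than the paper's functional $\lambda\in V^*$ with $\phi_x^*(\lambda)=e_j^*$, and you finish it with the same appeal to (5).
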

\begin{proof}
    The first four assertions are obvious.
    For (5) we have that $(V_x^G)^G_{G(x)}$ is the dual of the kernel of the morphism 
    \[
    (\Q^k)^* \longrightarrow V_x^G = (\Q^k)^* / \im \phi_x^*.
    \]
    The kernel is $\im \phi_x^*$ and its dual is the coimage of $\phi_x$, i.e. $\Q^k/\ker \phi_x$. This is clearly identified with $\im \phi_x=V$, using the assumption that the $v_j$ span $V$, i.e., that $\phi_x$ is surjective. Concretely, the obvious identification (which depends on $x$) sends an element in $\Q^k/\ker \phi_x$ to its image under $\phi_x$. This means that the standard basis vectors are sent precisely to $v_1,\dots,v_k$, showing the claim. We remark that the same argument shows that if the $v_j$ do not span, then still $G^2(x)=x$, but the ambient space $V$ is replaced by $(V_x^G)_{G(x)}^G=\vspan(v_1,\dots,v_k)$.

    Finally, let us consider (6). We have that 
    $w_j=0$ iff $e_j^*\in \im \phi_x^*$, and this holds iff there exists a $\lambda\in V^*$ such that $\lambda(v_j)=1$ and $\lambda(v_i)=0$ for $i\neq j$.
    This is turn is true iff $v_j\notin \vspan\{v_i \mid i\neq j\}$, i.e., $v_j$ is a coloop.
    By applying $G$ and using that $G^2(x)=x$ (possibly in a smaller ambient space, but this does not matter) we see that $w_j$ is a coloop iff $v_j=0$. 
\end{proof}

Next, we extend the operation $G$ to rational sharblies
\begin{equation}\label{equ:Gale map}
G\colon \shB_\Q \longrightarrow \shB_\Q
\end{equation}
denoted by the same letter. To this end, let $x=[v_1,\dots,v_k]\in \Sh(V)$. We may assume that the $v_j\in V$ are all nonzero and together span $V$, otherwise we will set $G(x):=0$.
Then, we define $G(x)=[w_1,\dots,w_k]\in \Sh(V_x^G)$ as above.
Passing to $\GL$-coinvariants we obtain a morphism of graded vector spaces $G\colon\shB_\Q\to \shB_\Q$. 
Indeed, to see that this is well-defined note that the sharbly relation \eqref{equ:sh null rel} and \eqref{equ:sh span rel} are respected since we defined $G(x)=0$ on such null sharblies. The relation \eqref{equ:sh perm rel} and $\GL$-equivariance are respected by the assertions established in Lemma \ref{lem:G properties}. The sign relation \eqref{equ:sh sign rel} is respected due to the first assertion of Lemma \ref{lem:G properties}. 
We just note that since $G(x)=0$ if $x$ has a coloop, the morphism $G^2$ will not be the identity, but rather the projection to the subspace spanned by sharblies without coloops.

Finally, we check that $G$ is compatible with the differential in the following sense.

\begin{lemma}
For any $x\in \shB_\Q$ we have
\begin{align}\label{equ:G d D compat}
    G(dx) &= -D G(x), & G(Dx) &= -dG(x).
\end{align}
\end{lemma}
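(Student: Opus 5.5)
The plan is to verify the first identity $G(dx)=-DG(x)$ directly on generators and deduce the second by applying $G$ to both sides. Over $\Q$ we use the simplified formula \eqref{equ:D simplified}. Both sides are linear and $\GL$-invariant, so it suffices to take $x=[v_1,\dots,v_k]\in\Sh(V)$.

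\emph{Degenerate cases first.} If some $v_j=0$ or the $v_j$ do not span $V$, then $x=0$ in $\Sh(V)$. Hence $G(x)=0$, and $G(dx)=G(d0)=0$, so the identity holds trivially. We may therefore assume all $v_j\neq 0$ and that they span $V$. Write $G(x)=[w_1,\dots,w_k]\in\Sh(V^G_x)$.

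\emph{Term-by-term comparison.} By \eqref{equ:d simplified}, $dx=\sum_j(-1)^{j-1}[v_1,\dots,\hat v_j,\dots,v_k]$, where only non-coloop $j$ contribute. By \eqref{equ:D simplified},
\[
-DG(x)=\sum_{j,\ w_j\text{ no coloop}}(-1)^{j-1}[w_1,\dots,\hat w_j,\dots,w_k]_{V^G_x/\Q w_j}.
\]
By Lemma \ref{lem:G properties}(6), $w_j$ is a coloop iff $v_j=0$, which does not occur. So the right-hand sum runs over all $j$. On the left, the coloop $j$ give zero sharblies. On the right, for coloop $v_j$ we have $w_j=0$, and then the quotient by $\Q w_j$ makes no sense. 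I would treat these terms by the convention that a term with $w_j=0$ vanishes, since such $j$ must be excluded already in the formula for $D$ (the coproduct term for $[w_j]$ with $w_j=0$ is a null sharbly, so $\eta$ kills it).

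It then remains to show, for $v_j$ not a coloop, that
\[
G([v_1,\dots,\hat v_j,\dots,v_k])=[w_1,\dots,\hat w_j,\dots,w_k]_{V^G_x/\Q w_j}
\]
up to ambient isomorphism. This is linear algebra. Let $x'$ be $x$ with $v_j$ removed; it still spans $V$. Then $V^G_{x'}=(\Q^{k-1})^*/\im\phi_{x'}^*$. The map $(\Q^k)^*\to(\Q^{k-1})^*$ forgetting $e_j^*$ is surjective with kernel $\Q e_j^*$, and it carries $\im\phi_x^*$ onto $\im\phi_{x'}^*$. It therefore induces a surjection $V^G_x\to V^G_{x'}$ whose kernel is the image of $\Q e_j^*$, namely $\Q w_j$. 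This identifies $V^G_{x'}\cong V^G_x/\Q w_j$ and sends $w_i\mapsto w'_i$ for $i\neq j$. A dimension count, $k-1-n=(k-n)-1$ with $w_j\neq0$, confirms the identification. The signs $(-1)^{j-1}$ match, so the first identity follows.

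\emph{Second identity.} I would deduce it by the dual argument rather than from the involution property, since $G^2$ is only the projection onto coloop-free sharblies. Compute $G(Dx)$ from $Dx=-\sum_{v_j\text{ no coloop}}(-1)^{j-1}[\dots\hat v_j\dots]_{V/\Q v_j}$. The key claim is that the Gale dual of $(v_i)_{i\neq j}$ in $V/\Q v_j$ is $(w_i)_{i\neq j}$ in the subspace $\vspan(w_i:i\neq j)$. This is the dual statement: taking $\ker$ of $\Q^{k-1}\to V/\Q v_j$ equals $\ker\phi_x$, and restricting the functionals gives the identification. When $v_j$ is not a coloop, $w_j\neq 0$. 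Removing $w_j$ leaves the span equal to $V^G_x$ iff $w_j$ is not a coloop iff $v_j\neq 0$, which holds, so the full ambient space is recovered. The sign then gives $-dG(x)$. For the coloop $v_j$ excluded from $D$, the corresponding $w_j=0$ is also excluded from $d$, so the index sets match.

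\emph{Main obstacle.} The delicate step is the bookkeeping of these edge cases: coloops, zeros, and the matching of index sets. Checking that the natural identifications of ambient spaces are canonical enough to pass to $\GL$-coinvariants is routine by comparison.
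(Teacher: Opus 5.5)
Your first identity is handled as in the paper, via $V^G_{x'}\cong V^G_x/\Q w_j$. (If $x$ has a coloop, $G(x)$ is simply the zero sharbly. Every term of $dx$ keeps that coloop, so both sides vanish; no per-term convention is needed.)

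\textbf{The gap is in the second identity.} You claim that $G$ sends the $j$-th term of $Dx$ to $\pm[w_1,\dots,\hat w_j,\dots,w_k]$ in $V^G_x$. This fails whenever $v_j$ is collinear with some other $v_i$. That is allowed in a nonzero rational sharbly, e.g.\ $v_i=2v_j$; only $v_i=\pm v_j$ is killed by the relations. In that case $\bar v_i=0$ in $V/\Q v_j$, so the $j$-th term of $Dx$ is a null sharbly and contributes nothing to $G(Dx)$. But the $j$-th term of $dG(x)$ is $(-1)^{j-1}[w_1,\dots,\hat w_j,\dots,w_k]$. For $x$ without coloops, its vectors are nonzero and span $V^G_x$ (since $v_j\neq 0$), and $w_i$ is merely a coloop of it (Lemma \ref{lem:G properties}(6) applied to $(\bar v_l)_{l\neq j}$). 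Sharblies with a coloop are not zero, so the index sets do \emph{not} match. One only gets
\[
G(Dx)+dG(x)=\sum_{j\ \text{special}}(-1)^{j-1}[w_1,\dots,\hat w_j,\dots,w_k],
\]
summed over those $j$ for which $v_j$ is collinear with another vector of $x$. This is exactly the asymmetry between the two identities: $G$ kills sharblies with zero vectors, but $d$ does not kill sharblies with coloops.

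\textbf{What is missing is a proof that this sum vanishes.} It vanishes only in $\shB_\Q$, not in $\Sh(V^G_x)$, so the step genuinely uses the $\GL$-coinvariants and is not routine bookkeeping. Group the special indices into collinearity classes.
\begin{itemize}
\item For a class $\{a,b\}$ of size two, the terms for $j=a$ and $j=b$ contain $w_b$, respectively $w_a$, as a coloop complementary to the same hyperplane $\vspan(w_l\mid l\neq a,b)$. An element of $\GL(V^G_x)$ fixing that hyperplane pointwise and sending $w_b\mapsto w_a$ identifies the two sharblies. After moving the coloop into the same slot, the two terms carry opposite signs and cancel.
\item For a class of size at least three, every term has at least two coloops. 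Such sharblies vanish in $\shB_\Q$: swap the two coloops by a $\GL$-element fixing the rest and compare with antisymmetry.
\end{itemize}
With this step added, your argument becomes the paper's proof.
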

\begin{proof}
    Let $x=[v_1,\dots,v_k]\in \Sh(\Q^n)$ be a rank $n$ sharbly. We assume it is non-zero, in particular all vectors $v_j$ are non-zero and together span $\Q^n$ and no two vectors are the same up to sign.
    We first assume that $x$ has a coloop $v_j$.
    Then $G(x)=0$. Furthermore, all terms of $dx$ and $Dx$ also have $v_j$ (or $\bar v_j$) as a coloop, see \eqref{equ:d simplified} and \eqref{equ:D simplified}, so $G(dx)=G(Dx)=0$.
    Hence the above equations \eqref{equ:G d D compat} hold.

    So we may assume that $x$ has no coloop. 
    Then $dx=\sum_{j=1}^k(-1)^{j-1}[v_1,\dots,\hat v_j,\dots,v_k]$.
    Set $[w_1,\dots,w_k]:=G(x)$ and $x'=[v_1,\dots,\hat v_j,\dots,v_k]$. 
    Here $x'$ still spans $V=\Q^n$ since there is no coloop in $x$, and obviously all its vectors are non-zero.
    Note that, in the notation above, $\phi_{x'}^*\colon V^*\to (\Q^{k-1})^*$ factorizes as $V^*\xrightarrow{\phi_x^*}(\Q^{k})^*\xrightarrow{-/e_j^*} (\Q^{k-1})^*$.
    Hence it is clear that $V_{x'}^G=V_x^G/\Q w_j$ and
    \[
    G([v_1,\dots,\hat v_j,\dots,v_k])
    = 
    [\bar w_1,\dots, \widehat{\bar w_j},\dots, \bar w_k ]_{V_x^G/\Q w_j}.
    \]
    Comparing to the definition of $D$ we hence have $G(dx)=-DG(x)$.

    Next we consider (for our sharbly $x$ without coloops as above) 
    \[-Dx=\sum_{j=1}^k (-1)^{j-1} [\bar v_1,\dots,\widehat{\bar v_j},\dots \bar v_k]_{V/\Q v_j}.\]
    Redefine $x':=[\bar v_1,\dots,\widehat{\bar v_j},\dots \bar v_k]_{V/\Q v_j}$.
    However, it might happen that the sharbly $x'$ contains a zero vector, if $v_j$ is collinear to some other $v_i$. 
    Now the maps $\phi_x$ and $\phi_{x'}$ fit into the commutative diagram 
    \[
    \begin{tikzcd}
        0\ar{d}\ar{r} & K \ar{d} \ar{r}{\cong} & K' \ar{d} 
        \\
        \Q e_j \ar{r}\ar{d} & \Q^k \ar{r}\ar{d}{\phi_x} & \Q^{k-1} \ar{d}{\phi_{x'}}
        \\
        \Q v_j \ar{r} & V \ar{r} & V/\Q v_j
    \end{tikzcd},
    \]
    where $K, K'$ are the vertical kernels.
    All columns are exact and the bottom two rows, hence so is the upper row and $K\cong K'$. 
    Dualizing we have that $V_{x'}^G = K^*\cong (K')^* = V_x^G$.
    The vectors $w_i$ of the Gale dual sharbly $G(x)$ are the images of the basis vectors $e_i^*$, and this holds in the same manner on either side of equality 
    \[
    V_{x'}^G = (\Q^{k-1})^*/\im \phi_{x'}^* = (\Q^{k})^*/\im \phi_{x}^* = V_x^G, 
    \]
    so that 
    \[
    G(x') =
    \begin{cases}
        [w_1,\dots,\hat w_j, \dots, w_k] & \text{if $\bar v_i\neq 0$ for all $i\neq j$} \\
        0 & \text{otherwise}
    \end{cases}.
    \]
    It follows that 
    \[
    G(Dx)+dG(x)= \sum_{\substack{j=1 \\ j \text{ special}} }^k
    (-1)^{j-1} [w_1,\dots,\hat w_j, \dots, w_k],
    \]
    where $j$ is special if there are $v_i$ collinear to $v_j$ in $x$. Let us group the $v_j$ into equivalence classes with respect to collinearity. Assume by reordering that $v_1,\dots,v_r$ form one such equivalence class, with $r\geq 2$. It then suffices to check that 
    \begin{equation}\label{equ:collin cancel}
    \sum_{j=1}^r
    (-1)^{j-1} [w_1,\dots,\hat w_j, \dots, w_k]=0.
    \end{equation}  
    Note that the vectors $w_1,\dots,\hat w_j,\dots, w_r$ are coloops by the last assertion of Lemma \ref{lem:G properties} above. 
    We then distinguish two cases:
    \begin{itemize}
        \item If $r=2$, then $[w_2,w_3, \dots, w_k]=[\lambda w_1,w_3, \dots, w_k]$ in $\shB_\Q$ by the first property of Lemma \ref{lem:G properties}. 
        But since coloops can always be rescaled by the $\GL$-action, $[\lambda w_1,w_3, \dots, w_k]=[w_1,w_3, \dots, w_k]$ in $\shB_\Q$ and the sum \eqref{equ:collin cancel} vanishes.
        \item For $r\geq 3$ each sharbly appearing in \eqref{equ:collin cancel} contains at least two coloops.
        But such sharblies are zero in $\shB_\Q$. Namely, we may assume that in a suitable basis such a sharbly has the form 
        \[[(1,0,\dots,0), (0,1,0,\dots, 0), *]\]
        with $*$ representing further vectors whose first two coordinates vanish. Then 
        \begin{align*}
        [(1,0,\dots,0), (0,1,0,\dots, 0), *]
        &=
        -
        [(0,1,0,\dots, 0), (1,0,\dots,0), *]
        \\&=
        -
        [(1,0,\dots,0), (0,1,0,\dots, 0), *]=0,
        \end{align*}
        where we have used anti-symmetry for the first equation and the $\GL$-action for the second.
    \end{itemize}
    Hence we have shown that $G(Dx)+dG(x)=0$ as desired.
\end{proof}

\begin{ex}\label{ex:Gale dual graph}
    Suppose that $\Gamma\in U^c\G_0^+$ is a bridgeless graph with $k$ edges and let $x=\Phi(\Gamma)\in \Sh(H^1(\Gamma;\Z))$ be its sharbly representation. 
    We shall quietly identify $x$ with its image $x\in\Sh(H^1(\Gamma;\Q))$.
    The sharbly $x$ is essentially the loop-edge incidence matrix. More precisely, 
    \[
    \phi_x \colon \Q^k\cong C^1(\Gamma;\Q)\longrightarrow H^1(\Gamma;\Q)=:V
    \]
    is the natural quotient map from the $1$-cochains of $\Gamma$ into the first cohomology. There is the exact sequence
    \[
    0\longrightarrow H^0(\Gamma;\Q)\stackrel{\iota}{\longrightarrow } C^0(\Gamma;\Q)\stackrel{\partial^*}{\longrightarrow }  C^1(\Gamma;\Q) 
    \stackrel{\phi_x}{\longrightarrow} H^1(\Gamma;\Q)\longrightarrow 0,
    \]
    with $\partial^*$ the coboundary map.
    Thus $\ker \phi_x= \im\partial^* \cong \coker \iota$.
    Hence $V_x^G=\coker\phi_x^*=(\ker \phi_x)^* \cong \ker \iota^*$, with $\iota^*\colon C_0(\Gamma;\Q)\to H_0(\Gamma;\Q)$.
    %We hence have $\phi_x^*\colon H_1(\Gamma;\Q)\to C_1(\Gamma;\Q)$, and $V_x^G=\coker\phi_x^*=C_1(\Gamma;\Q)/H_1(\Gamma;\Q)$. The boundary map then yields an isomorphism $V_x^G\cong \ker \iota\subset C_0(\Gamma;\Q)$, with $\iota\colon C_0(\Gamma;\Q)\to H_0(\Gamma;\Q)$ the map sending a vertex to its connected component.
    %With this identification of $V_x^G\cong \ker \iota$, 
    The Gale dual sharbly $G(x)$ then encodes the morphism (edge-vertex incidence matrix)
    \[
    \phi_{G(x)} \colon \Q^k\cong C_1(\Gamma;\Q) \stackrel{\partial}{\longrightarrow} \ker\iota^* \subset C_0(\Gamma;\Q).
    \]
    Here some care has to be taken, in that for $x$ a sharbly containing a zero vector we have ``forcefully'' defined that $G(x)=0$, and $x=\Phi(\Gamma)$ contains a zero vector iff $\Gamma$ has a bridge.  
    
    It is furthermore interesting to note that the sharblies $\Phi(W_n)$ corresponding to the wheel graphs $W_n$ (for odd $n$) are anti-self-dual under $G$, $G(\Phi(W_n))=-\Phi(W_n)$.
    To see this, a natural basis of $H_1(W_n)$ is given by the $n$ triangles in the wheel. Let $e_1,\dots,e_n$ be the dual basis of $H^1(W_n)$.
    If we denote the edges in the rim of the wheel by $a_1,\dots, a_n$ and the spokes by $b_1,\dots, b_n$, then 
    \begin{equation}\label{equ:Phi Wn}
    \Phi(W_n) = [e_1,\dots,e_n,e_1-e_2,e_2-e_3,\dots,e_n-e_1]
    \end{equation}
    and 
    \begin{align*}
    \phi_x(a_i) &= e_i & \phi_x(b_i) &= e_i-e_{i+1},
    \end{align*}
    where indices are understood modulo $n$, i.e., $n+1\equiv 1$.
    Denoting the corresponding dual bases by $e_1^*,\dots, e_n^*$, respectively $a_1^*,\dots, a_n^*, b_1^*,\dots, b_n^*$ we hence have 
    \[
    \phi_x^*(e_i^*) = a_i^* + b_i^*- b_{i-1}^*. 
    \]
    Thus 
    \[
    V_x^G = \coker \phi_x^* = \Q^{2n} / \langle a_i^* + b_i^*- b_{i-1}^* \mid i=1,\dots, n\rangle.
    \]
    A basis of the latter space is given by $f_i=[b_i^*]$, $i=1,\dots, n$. The relations yield $[a_i^*]=f_{i-1}-f_{i}$. Hence 
    \begin{align*}
    G(\Phi(W_n)) &=
    [f_n-f_1,f_1-f_2,\dots ,f_{n-1}-f_n,f_1,\dots,f_n]
    \\&=
    -[f_1,\dots,f_n, f_n-f_1,f_1-f_2,\dots f_{n-1}-f_n]
    \\&=
    -[f_1,\dots,f_n,f_1-f_2,f_2-f_3,\dots,f_n-f_1],
    \end{align*}
    which is $-\Phi(W_n)$ in $\shB_\Q$.  
    
    If we identify $V_x^G\cong \ker \iota^*$ then $f_i$ represents the difference of the $i$-th vertex on the rim of the wheel and the center vertex, and $G(\Phi(W_n))$ can be understood as the (projected) edge-vertex incidence matrix.
\end{ex}

\begin{rem}
    The construction of the Gale duality map can be extended to $\shB_F$ for $F$ any field of characteristic zero. No special property of $\Q$ was used above.
\end{rem}

\section{The Goncharov cocycle}
For the purposes of this section, a (complex) cocycle is a linear map
\[
\alpha \colon (\shB,d+D) \longrightarrow \C
\]
such that 
\[
\alpha\circ (d+D) = 0.
\]
We will not necessarily require that $\alpha$ is of homogeneous degree. Rather $\alpha=\sum_j\alpha_j$, with the pieces $\alpha_j$ of degree $-j$, i.e., linear maps supported on the degree $j$ subspace $\shB_j$, that are individually cocycles.
%We will construct two cocycles in this section, the Goncharov cocycle $\alpha_{\mathrm G}$ and (a sketch of) the Brown--Borel cocycle $\alpha_B$.

\subsection{Goncharov's Grassmannian polylogarithm }
The \emph{Goncharov Grassmannian polylogarithm} $\GonL_{n}$ for $n\geq 2$ is a function that associates to a collection $(H_1,\dots, H_{2n})$ of $2n$ hyperplanes in $\CP^{n-1}$ a complex number $\GonL_n(H_1,\dots, H_{2n})$, introduced by Goncharov \cite{Goncharov}.
It is convenient to define hyperplanes from their coordinate vectors. So if $v\in \C^n$ is a non-zero complex $n$-vector, we define the corresponding hyperplane as 
\[
v^+ = \{z=(z_1,\dots,z_n) \mid v\cdot z = 0\} \subset \CP^{n-1},
\]
where we used homogeneous coordinates $(z_1,\dots,z_n)$ to denote a point in $\CP^{n-1}$.

To define $\GonL_n$, we will use Goncharov's integral (see \cite[Proposition 5.3]{Goncharov})
\begin{equation}\label{equ:psi G def}
\psi_n(v_1,\dots,v_{2n})
=
\int_{\CP^{n-1}} 
\log\left|\frac{v_2\cdot z}{v_1\cdot z}\right|
\bigwedge_{\alpha=3}^{2n} d\log\left|\frac{v_\alpha\cdot z}{v_1\cdot z}\right|
\end{equation}
for $z=(z_1,\dots,z_n)$ again homogeneous coordinates on $\CP^{n-1}$, and non-zero vectors $v_1,\dots,v_{2n}\in \C^{n}$. Then the Goncharov Grassmannian polylogarithm may be defined by, see \cite[Proposition 5.3]{Goncharov},\footnote{This is not the primary definition of $\GonL_n$ given in \cite{Goncharov}, but a property. It is the definition that is beneficial to use in our Section \ref{sec:RWvsG}, though.}
\begin{equation}\label{equ:psi is L}
\GonL_n(v_1^+,\dots, v_{2n}^+)
=
-(-4)^{n-1}(2\pi i)^{1-n} {\binom{2n-2}{n-1}}^{-1}
%(-4)^{n}(2\pi i)^{1-n}(2n)^{1-2n} {2n-2\choose n-1}^{-1}
\psi_n(v_1,\dots,v_{2n}).
\end{equation}
We will need the following main property of $\GonL_n$.
\begin{thm}[{Goncharov \cite[Theorem 4.3]{Goncharov}, \cite{GoncharovPersonal}}]
\label{thm:Goncharov} $ $
\begin{enumerate}
    \item The function $\GonL_n$ is skew-symmetric, that is, for any $2n$ hyperplanes $H_1,\dots,H_{2n}$ in $\CP^{n-1}$ and permutation $\sigma\in S_{2n}$ one has 
    \[
    \GonL_n(H_{1},\dots ,H_{2n})
    = (-1)^\sigma
    \GonL_n(H_{\sigma(1)},\dots ,H_{\sigma(2n)}).
    \]
    \item For any $2n+1$ hyperplanes $H_1,\dots,H_{2n+1}$ in $\CP^{n-1}$ one has 
    \[
    \sum_{j=1}^{2n+1} (-1)^j \GonL_n(H_1,\dots, \hat H_j ,\dots , H_{2n+1})=0.
    \]
    \item For any $2n+1$ hyperplanes $H_1,\dots,H_{2n+1}$ in $\CP^{n}$ one has 
    \begin{equation}\label{equ:gon2}
    \sum_{\substack{ j=1 \\ j \text{ admissible} } }^{2n+1} (-1)^j \GonL_n(H_j\cap H_1,\dots, \widehat{ H_j\cap H_j} ,\dots , H_j \cap H_{2n+1})=0,
    \end{equation}
    where the sum runs over the \emph{admissible} indices, and $j$ is admissible if $\cap_{i\neq j} H_i=\emptyset$ and there is no $i\neq j$ such that $H_i=H_j$.
\end{enumerate}
\end{thm}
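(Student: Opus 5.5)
The plan is to derive all three assertions from the integral representation \eqref{equ:psi is L}, using only Stokes' theorem for forms with logarithmic singularities along the hyperplanes $v_i^+$. Write $f_i(z)=v_i\cdot z$. For a tuple of $m$ nonzero vectors, put
\[
\omega(v_1,\dots,v_m)=\log\Bigl|\tfrac{f_2}{f_1}\Bigr|\bigwedge_{\alpha\ge 3}d\log\Bigl|\tfrac{f_\alpha}{f_1}\Bigr| .
\]
This is a well-defined form on the projective space minus the hyperplanes. Its singularities are locally integrable, since $\log|f|$ and $d\log|f|$ are $L^1$ near $f=0$. Moreover, the boundary terms in Stokes' formula on a small tubular neighbourhood of a hyperplane are controlled by the behaviour of $d\log|f|$ restricted to a small circle around it.

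\textbf{(1) Skew-symmetry.} First, $\psi_n$ is skew in $v_3,\dots,v_{2n}$, because the $1$-forms $d\log|\cdot|$ anticommute. Next, swapping $v_2$ and $v_3$ changes the integrand by
\[
d\Bigl(\log\bigl|\tfrac{f_2}{f_1}\bigr|\,\log\bigl|\tfrac{f_3}{f_1}\bigr|\Bigr)\wedge\bigwedge_{\alpha\ge4}d\log\bigl|\tfrac{f_\alpha}{f_1}\bigr| .
\]
This is exact, so I will show that its integral vanishes. Excising $\epsilon$-tubes around the hyperplanes, the boundary integrals are $O(\epsilon\log^2\epsilon)$. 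The reason is that on a circle $|f_i|=\epsilon$ the form $d\log|f_i|$ restricts to zero, while the remaining factors are bounded by powers of $\log\epsilon$. Finally, to change the base vector $v_1$ to $v_2$, substitute $\log|f_a/f_1|=\log|f_a/f_2|+\log|f_2/f_1|$ and expand multilinearly. Terms containing a repeated factor $d\log|f_2/f_1|$ vanish, and the remaining correction term is again exact. This gives full $S_{2n}$-skew-symmetry.

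\textbf{(2) The cocycle relation on $\CP^{n-1}$.} By (1), it suffices to use the base point $v_1$ in every term that contains it. Take $g_i=f_i/f_1$ for $i=2,\dots,2n+1$; this gives $2n$ functions. Because each $\psi_n$ is skew, the alternating sum becomes the integral of the antisymmetrization of $\log|g_{i_1}|\,d\log|g_{i_2}|\wedge\cdots\wedge d\log|g_{i_{2n-1}}|$ over ordered tuples drawn from these $2n$ functions. Up to exact terms, which are handled as in (1), this can be rewritten as a contraction of the single form $d\log|g_2|\wedge\cdots\wedge d\log|g_{2n+1}|$ of degree $2n$. That form vanishes identically, since it has degree $2n>2n-2=\dim_\R\CP^{n-1}$. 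The term with $j=1$ omitted is absorbed by changing the base point as in (1).

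\textbf{(3) The residue relation on $\CP^n$.} This is the main obstacle. The plan is to consider on $\CP^n$ the antisymmetrized form
\[
\Omega=\mathrm{Alt}\Bigl(\log|f_1|\bigwedge_{i\ge2}d\log|f_i|\Bigr)
\]
of degree $2n-1$ in $2n+1$ homogeneous functions, normalized by ratios so that it is projectively well defined. Apply Stokes' theorem on $\CP^n$ minus $\epsilon$-tubes around the hyperplanes.

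The first term is $\int_{\CP^n}d\Omega$. Here $d\Omega$ is an alternating sum of wedges of $2n$ real forms $d\log|f_i|=\operatorname{Re}d\log f_i$. I would try to show that its integral vanishes by expanding each factor as $\tfrac12(d\log f_i+\overline{d\log f_i})$. Only terms of type $(n,n)$ survive, and these cancel under antisymmetrization because the $d\log f_i$ satisfy the Plücker-type linear relations; alternatively, one can argue via the symmetry under complex conjugation. This step needs care.

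The second term is the boundary contribution around $H_j$, computed by a Poincaré residue. The circle integral of $d\arg f_j$, together with the identity $d\log|f|=\operatorname{Re}\,d\log f$, produces a multiple of $\psi_n$ of the restricted tuple $(H_j\cap H_i)_{i\ne j}$ on $H_j\cong\CP^{n-1}$. Matching the constants gives exactly the normalization in \eqref{equ:psi is L}.

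Finally, the non-admissible $j$ contribute nothing. If $H_i=H_j$ for some $i\neq j$, then the restricted tuple contains a vanishing vector, and the boundary term is zero. If $\bigcap_{i\ne j}H_i\neq\emptyset$, then the restricted hyperplanes on $H_j$ have a common point, i.e. they lie in a pencil. In that case the pullback of the $d\log$'s factors through a lower-dimensional projection, and the integrand vanishes for dimension reasons, just as in (2).
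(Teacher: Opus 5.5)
\textbf{Overall.} The paper does not prove this statement. It quotes Goncharov's Theorem 4.3, and the admissibility condition in (3) is a correction Goncharov supplied by personal communication. Your part (1) is sound in outline. Part (3) has a genuine gap, your non-admissibility argument is wrong, and your justification in (2) is invalid.

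\textbf{The residue terms in (3).} Your form $\Omega$ is built only from $\log|f_i|$ and $d\log|f_i|$, so it cannot produce the residue terms. Apply the estimate you yourself use in (1) to the boundary of an $\epsilon$-tube around a generic point of $H_j$. There $d\log|f_j|$ restricts to zero on $|f_j|=\epsilon$, and every other factor is smooth, so its component along the circle fibre is $O(\epsilon)$. Hence these boundary integrals are $O(\epsilon\log\epsilon)$ and tend to zero. There is no $d\arg f_j$ factor to integrate around the circle, so no multiple of $\psi_n$ of the restricted configuration appears.

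Your argument for the bulk term is also incorrect. $d\Omega$ is a multiple of $\bigwedge_i d\log|g_i|$, and its $(n,n)$-part pairs holomorphic with antiholomorphic factors. The holomorphic Plücker relations therefore do not make it vanish pointwise: already $d\log|z|\wedge d\log|z-1|\neq 0$ on $\C\setminus\{0,1\}$. Conjugation symmetry helps only for real configurations.

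The missing ingredient is Goncharov's form $r_{2n-1}$. It is an antisymmetrized combination, with specific coefficients, of terms
$\log|g_1|\,d\log|g_2|\wedge\cdots\wedge d\log|g_{2k+1}|\wedge d\arg g_{2k+2}\wedge\cdots\wedge d\arg g_{2n}$
in the $2n$ ratios $g_i$. Its differential is the real or imaginary part of the holomorphic form $d\log g_1\wedge\cdots\wedge d\log g_{2n}$. That form has type $(2n,0)$ on the $n$-dimensional $\CP^n$, so it vanishes identically. The $d\arg g_j$-components of $r_{2n-1}$ are what yield $2\pi$ times $r_{2n-2}$ of the restricted configuration on $H_j$. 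To reach the normalization \eqref{equ:psi is L}, you then still need Goncharov's comparison (his Proposition 5.3) of $\int_{\CP^{n-1}} r_{2n-2}$ with $\psi_n$.

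\textbf{Non-admissible indices.} Suppose the $v_i$ span $\C^{n+1}$ and $\bigcap_{i\neq j}H_i\neq\emptyset$. Then this intersection is a single point $p\notin H_j$. The restricted hyperplanes $H_i\cap H_j$ therefore have empty common intersection in $H_j$, since the images $\bar v_i$ span $\C^{n+1}/\C v_j$. They do not lie in a pencil, and the $j$-th residue is a generally non-zero value of $\GonL_n$.

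That term must be excluded for a different reason: $2n$ of the hyperplanes pass through $p$. This creates an additional boundary stratum at $p$. Its contribution is the configuration seen from $p$, which is isomorphic to the restriction to $H_j$ (cf.\ Remark~\ref{rem:coloop cancellation}), and it cancels the $j$-th residue. This is the stratum the paper says was overlooked in the published version.

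\textbf{Part (2).} The terms with $j\neq 1$ do not sum to zero; they add up to minus the $j=1$ summand. A formal contraction of $d\log|g_2|\wedge\cdots\wedge d\log|g_{2n+1}|$ need not vanish just because that form vanishes, since such contractions are not operations on differential forms.

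The relation in fact holds identically, with no dimension count. Write $L_i=\log|f_i|$ and $x_i=dL_i$. Let $\iota_E$ and $\iota_{\mathbf 1}$ be the formal derivations with $x_i\mapsto L_i$ and $x_i\mapsto 1$ respectively. Then the antisymmetrized integrand of the term omitting $v_j$ is $\iota_E\iota_{\mathbf 1}(x_1\cdots\widehat{x_j}\cdots x_{2n+1})$, whichever base vector is used. The alternating sum is therefore a multiple of $\iota_E\iota_{\mathbf 1}^2(x_1\cdots x_{2n+1})=0$.
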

In \eqref{equ:gon2} we tacitly identify $H_j$ with $\CP^{n-1}$. Since \eqref{equ:psi G def} is evidently invariant under the $\GL(n,\C)$-action on hyperplane configurations, the choice of this identification does not affect the value of $\GonL_n$.

\begin{rem}
    In the published version of \cite[Theorem 4.3]{Goncharov}, the admissibility condition appearing in \eqref{equ:gon2} was not present. This is an oversight confirmed by Goncharov \cite{GoncharovPersonal}, due to an overlooked boundary stratum that is relevant (only) in degenerate configurations for which $\cap_{i\neq j} H_i\neq \emptyset$. %The TODO \cite{GoncharovRudenko}...
\end{rem}

Goncharov has computed the value of $\GonL_n$ on some families of hyperplane configurations. Below we only need the following special case.

\begin{prop}[{Goncharov \cite[special case of Theorem 4.5]{Goncharov}}]
\label{prop:Goncharov nonvanishing}
Let $n\geq 3$ be odd and let $H_1,\dots, H_{2n}$ be the configuration of hyperplanes in $\CP^{n-1}$ defined in homogeneous coordinates $(z_1,\dots,z_n)$ as 
\begin{align*}
    z_1=0, z_2=0, \dots, z_n=0, z_1=z_2, z_2=z_3, \dots, z_{n-1}=z_n, z_n=z_1.
\end{align*}
Then 
\[
\GonL_n(H_1,\dots,H_{2n}) \neq 0.
\]
\end{prop}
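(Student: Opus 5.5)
We would prove this by computing $\GonL_n(H_1,\dots,H_{2n})$ explicitly through the integral representation \eqref{equ:psi is L}. The goal is to show that $\psi_n$ evaluated on the wheel configuration equals a nonzero rational multiple of $(2\pi i)^{n-1}\zeta(n)$. That is the shape of the special case of Goncharov's Theorem 4.5: for $n$ odd the value is, up to an explicit nonzero constant, $\zeta(n)$.

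\emph{Normalization.} First, by the skew-symmetry of Theorem \ref{thm:Goncharov}(1), we may reorder the hyperplanes freely. We take $v_1=e_n$ and $v_2=e_1-e_n$, so that on the affine chart $z_n=1$ the integrand becomes
\[
\log|z_1-1|\, d\log|z_1|\wedge\cdots\wedge d\log|z_{n-1}|\wedge \bigwedge_{i=1}^{n-2} d\log|z_i-z_{i+1}| \wedge d\log|z_{n-1}-1|,
\]
up to a sign fixed by the reordering. Here $d\log|f|=\frac12(d\log f+d\log\bar f)$. Expanding the wedge product of $2n-2$ such forms on the $(n-1)$-dimensional complex space, only terms of type $(n-1,n-1)$ survive.

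\emph{Structure of the surviving terms.} Each surviving term is a product of a holomorphic $\bigwedge d\log$ of $n-1$ of the linear forms and an antiholomorphic one of the complementary $n-1$ forms. The combinatorics of which splittings are nonzero is governed by the wheel: a set of $n-1$ forms gives a nonvanishing holomorphic wedge iff the corresponding edges (spokes $z_i$, rim $z_i-z_{i+1}$, normalized by $v_1$) form a spanning forest condition in the graph. We would organize the sum using this graphical description, which matches the Gale duality picture of Example \ref{ex:Gale dual graph}.

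\emph{Evaluation.} Next we change to the coordinates $t_i=z_i/z_{i+1}$ (with $z_n=1$). In these coordinates the dlog forms of the rim become $d\log|1-t_i|$ plus spoke contributions, and the integral factorizes into an iterated single-valued integral of the type computing the single-valued polylogarithm. We then evaluate it by integrating one variable at a time, using the standard formula $\int_{\C}\log|1-t|\,d\log|t|\wedge d\log|1-t|\cdot(\dots)$, equivalently Stokes on $\CP^1$ with single-valued primitives. Inductively, the inner integrals produce single-valued polylogarithms $\mathcal L_k$. The final integral gives a constant times $\mathcal L_n(1)$, which for odd $n$ equals $\zeta(n)$ up to a nonzero rational factor. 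Since $\zeta(n)>0$, this proves the claim.

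\emph{Main obstacle.} The hardest step is the bookkeeping of the many $(n-1,n-1)$ terms and their signs. We must show that they do not cancel, and in particular that their total coefficient in front of $\zeta(n)$ is nonzero exactly for odd $n$ (for even $n$ the configuration has an orientation-reversing symmetry, consistent with $W_n=0$). As a fallback, we would exploit the rotational symmetry $z_i\mapsto z_{i+1}$, which preserves the configuration with sign $+1$ for odd $n$. We would also use the relation Theorem \ref{thm:Goncharov}(3) to reduce to the $n-2$ case, inducting down to $n=3$. There the value $\GonL_3\neq 0$ is a direct computation in terms of the Bloch–Wigner-type function and gives a multiple of $\zeta(3)$.
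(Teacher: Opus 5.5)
\textbf{There is a genuine gap: the proposal does not prove the statement.} Everything after your normalization step is asserted rather than shown. The claims that ``the integral factorizes into an iterated single-valued integral'', that ``inductively the inner integrals produce $\mathcal L_k$'', and that ``the final integral gives a constant times $\mathcal L_n(1)$'' are exactly the content of Goncharov's evaluation theorem. Since the proposition is purely a non-vanishing statement, it rests entirely on that constant being nonzero, and you leave this open as your ``main obstacle''.

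The fallback does not repair this, for three reasons.
\begin{enumerate}
\item The relations in Theorem \ref{thm:Goncharov}(2),(3) are linear relations among values of $\GonL_m$ for one fixed $m$. In (3), $2n+1$ hyperplanes in $\CP^n$ are restricted to configurations of $2n$ hyperplanes in $\CP^{n-1}$, so one stays with $\GonL_n$. These relations never relate $\GonL_n$ to $\GonL_{n-2}$, so they give no induction on $n$.
\item A vanishing alternating sum cannot by itself show that any particular term is nonzero.
\item Symmetry arguments can only force vanishing, never non-vanishing. Moreover, the rotation $z_i\mapsto z_{i+1}$ permutes the $2n$ hyperplanes by two $n$-cycles, hence with sign $+1$ for \emph{every} $n$. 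It carries no parity information. The vanishing for even $n$ comes instead from a reflection, or from complex conjugation, which fixes the real configuration and the integrand but has degree $(-1)^{n-1}$ on $\CP^{n-1}$.
\end{enumerate}

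\textbf{How the paper closes the argument.} It cites Goncharov directly. In the dual points, $X_{n+k}$ lies on the line through $X_k$ and $X_{k+1}$, and $X_{2n}$ on the line through $X_n$ and $X_1$. So the configuration is the special case of \cite[Theorem 4.5]{Goncharov} with generalized cross-ratio $a=1$ (his Proposition 4.4). The value is therefore a nonzero multiple of $\mathcal L_n(1)=\zeta(n)$.

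\textbf{If you want a self-contained computation instead,} your normalization is a good start. On the chart $z_n=1$, substitute $z_i=t_it_{i+1}\cdots t_{n-1}$. The spoke forms then span the $d\log|t_j|$. This kills the $d\log|z_{i+1}|$ parts of the rim forms $d\log|z_i-z_{i+1}|=d\log|z_{i+1}|+d\log|1-t_i|$, and you obtain
\[
\psi_n=\pm\int_{\C^{n-1}}\log|1-t_1\cdots t_{n-1}|\,\bigwedge_{i=1}^{n-1} d\log|t_i|\wedge d\log|1-t_i| .
\]
Each factor here is automatically of type $(1,1)$, so the bookkeeping over $(n-1,n-1)$ splittings that you worry about disappears.

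What then remains, and is missing from your proposal, is to actually evaluate this iterated multiplicative convolution. That means integrating one variable at a time via single-valued polylogarithms, and tracking the constants down to a nonzero multiple of $\mathcal L_n(1)$. In effect, you would have to reprove Goncharov's theorem at $a=1$.
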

\begin{proof}
    Goncharov considers a more general configuration where the points $X_1,\dots, X_{2n}$ in $\CP^{n-1}$ dual to our hyperplanes are such that $X_{n+k}$ is on the line through $X_k$ and $X_{k+1}$ ($k=1,\dots,n-1$) and $X_{2n}$ is on the line through $X_n$ and $X_1$.
    Then in \cite[Proposition 4.4]{Goncharov}, he assigns to this configuration a generalized cross-ratio, which is easily seen to be $a=1$ for our configuration.
    Finally, \cite[Theorem 4.5]{Goncharov} evaluates $\GonL_n(H_1,\dots,H_{2n})$ to a non-zero multiple of the polylogarithm $\mathrm {Li}_n(1)=\zeta(n)$.
\end{proof}

\subsection{The Goncharov cocycle $\alpha_{\mathrm G}$}
The first cocycle we consider is the \emph{Goncharov cocycle}.
It in fact factors through the complex sharbly Hopf algebra $\shB_\C$. That is, it is a composition 
\[
(\shB,d+D) \longrightarrow 
(\shB_\C,d+D) \longrightarrow \C.
\]
We will use the same notation $\alpha_{\mathrm G}\colon\shB_\C\to \C$ for the second map or for any restriction to $\shB_{\RR}$, $\RR\subset \C$.
The morphism $\alpha_{\mathrm G}$ acts on a rank-$n$ sharbly $[v_1,\dots,v_k]$ as

\[
\alpha_{\mathrm G}([v_1,\dots,v_k]) =
\begin{cases}
\GonL_{n}(v_1^+,\dots, v_k^+)
&\text{if $k=2n\geq 4$ and all $v_j\neq 0$} 
\\
0
&\text{otherwise}
\end{cases}.
\]

The following can be seen as merely a restatement of Goncharov's Theorem \ref{thm:Goncharov} and Proposition \ref{prop:Goncharov nonvanishing} above.
\begin{thm}
    \label{thm:alpha G}
The object $\alpha_{\mathrm G}$ is a cocycle, i.e., $\alpha_{\mathrm G}\circ (d+D) = 0$, and in fact $\alpha_{\mathrm G}\circ d=\alpha_{\mathrm G}\circ D=0$.
Furthermore, the image of the wheel graph $\alpha_{\mathrm G}(\Phi(W_n))$ is non-zero for any odd $n\geq 3$.
\end{thm}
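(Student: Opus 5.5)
First I check that $\alpha_{\mathrm G}$ is well defined on $\shB_\C$, i.e.\ compatible with the relations \eqref{equ:sh perm rel}--\eqref{equ:sh sign rel} and with $\GL$-coinvariants. Since $v^+=(\lambda v)^+$, the sign relation holds. Skew-symmetry is Theorem \ref{thm:Goncharov}(1). $\GL(n,\C)$-invariance follows from the invariance of \eqref{equ:psi G def}. Nullity is built into the definition. For the span relation: if $v_1,\dots,v_{2n}$ do not span $\C^n$, then all hyperplanes contain a common point. In the integral \eqref{equ:psi G def} the integrand then depends only on the projection from that point, so it is pulled back from a lower-dimensional projective space. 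Hence the top-degree form vanishes and $\psi_n=0$.

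Next I show $\alpha_{\mathrm G}\circ d=0$. It suffices to test on a rank-$n$ sharbly $x=[v_1,\dots,v_{2n+1}]$ with all $v_j\neq 0$ spanning. By \eqref{equ:sh diff}, $\alpha_{\mathrm G}(dx)=\sum_j(-1)^{j-1}\GonL_n(v_1^+,\dots,\widehat{v_j^+},\dots,v_{2n+1}^+)$. This vanishes by Theorem \ref{thm:Goncharov}(2). Terms where removing $v_j$ destroys spanning are already zero by the span argument above, so no extra care is needed.

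For $\alpha_{\mathrm G}\circ D=0$, I test on a rank-$(n+1)$ sharbly $x=[v_1,\dots,v_{2n+1}]$, since $D$ lowers rank by one and length by one. I split $D=D_1+D_2$ as in \eqref{equ:D formula}.
\begin{itemize}
\item In $D_2$, each term lives on a proper saturated sublattice spanned by $2n$ of the vectors. That sublattice has rank at most $n$. Rank exactly $n$ happens precisely when $v_j$ is a coloop.
\item In $D_1$, each term is $[\bar v_i]_{i\neq j}$ in $\Z^{n+1}/(\Z v_j)_{\sat}$. Dually, the hyperplanes of this quotient are $H_j\cap H_i$ with $H_i=v_i^+\subset\CP^n$.
\end{itemize}
Here $\bar v_i=0$ iff $v_i$ is collinear with $v_j$, i.e.\ $H_i=H_j$. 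The quotient sharbly spans iff the $H_j\cap H_i$ have no common point, i.e.\ $\cap_{i\neq j}H_i\cap H_j=\emptyset$.

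The main obstacle is matching the admissibility condition in \eqref{equ:gon2} with these vanishing conditions and with the $D_2$ coloop terms. Admissibility of $j$ requires two things: no $H_i=H_j$, and $\cap_{i\neq j}H_i=\emptyset$, i.e.\ $v_j$ is not a coloop. In the first case, the nullity relation already kills the $D_1$ term. In the coloop case I must show that the $D_1$ term and the $D_2$ term cancel under $\alpha_{\mathrm G}$. After tensoring with $\C$ (coinvariants over $\GL(n,\C)$, as in Remark \ref{rem:coloop cancellation}), the quotient by $v_j$ and the span of the others are identified compatibly with the vectors, up to index scaling. So the two $\GonL_n$ values agree and enter with opposite signs. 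Then $\alpha_{\mathrm G}(Dx)$ reduces, up to overall sign, to the admissible sum \eqref{equ:gon2}, which vanishes by Theorem \ref{thm:Goncharov}(3). The delicate point is bookkeeping of signs, $(-1)^{j-1}$ versus $(-1)^j$, and checking that $\alpha_{\mathrm G}$ really factors through $\shB_\C$ so that the integral-coinvariant discrepancy noted in Remark \ref{rem:coloop cancellation} is harmless.

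Finally, for nonvanishing, I use \eqref{equ:Phi Wn}: $\Phi(W_n)=[e_1,\dots,e_n,e_1-e_2,\dots,e_n-e_1]$. The associated hyperplanes are $z_i=0$ and $z_i=z_{i+1}$, which is exactly the configuration of Proposition \ref{prop:Goncharov nonvanishing}. Hence $\alpha_{\mathrm G}(\Phi(W_n))=\GonL_n(H_1,\dots,H_{2n})\neq 0$.
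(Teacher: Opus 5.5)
Your proposal is correct and follows the paper's proof essentially step by step. Well-definedness comes from the vanishing of the top form pulled back from a lower-dimensional projective space, and $\alpha_{\mathrm G}\circ d=0$ from Theorem \ref{thm:Goncharov}(2). For $D$, the paper simply invokes the simplified formula \eqref{equ:D simplified} from Remark \ref{rem:coloop cancellation}, which is exactly your cancellation of the $D_1$ and $D_2$ coloop terms over $\C$, and then applies the admissible identity of Theorem \ref{thm:Goncharov}(3); the wheel nonvanishing comes from Proposition \ref{prop:Goncharov nonvanishing}, as in your argument. (Minor slip: $D_2$ terms occur only when $v_j$ is a coloop, and the sublattice then always has rank exactly $n$.)
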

\begin{proof}
We first check that $\alpha_{\mathrm G}$ is well defined.
The relation \eqref{equ:sh null rel} is satisfied by definition, and $\eqref{equ:sh perm rel}$ by antisymmetry of the Goncharov polylogarithm. The relation \eqref{equ:sh sign rel} is satisfied since the hyperplane $v_j^+$ does not depend on a rescaling of $v_j$.
Finally, consider the relation \eqref{equ:sh span rel}, i.e., assume that the $v_j$ do not span $\C^n$.
Then $\GonL_n=0$, because the integrand is a top form pulled back from $\CP^{n-2}$, and hence zero.

Next, we check that $\alpha_{\mathrm G}$ is a cocycle.
Precisely, we have 
\begin{align*}
\alpha_{\mathrm G}(d[v_1,\dots,v_k])
&=
\sum_{j=1}^k
(-1)^{j-1}
\alpha_{\mathrm G}([v_1,\dots,\hat v_j,\dots,v_k])
\\&=
\sum_{j=1}^k
(-1)^{j-1}
\GonL_{n}(v_1^+,\dots,\hat v_j^+,\dots,v_k^+)
=0
\end{align*}
for $k=2n+1$ by the second part of Theorem \ref{thm:Goncharov}.
Finally, we consider the deformed differential $D$. Recall that we have 
\[
D([v_1,\dots,v_k])
=
\sum_{j=1\atop\text{$v_j$ no coloop}}^k (-1)^{j}
[\bar v_1,\dots,\widehat{ \bar v}_j, \dots,\bar v_k]_{V/v_j}.
\]
Note that $v_j$ is a coloop iff $\cap_{i\neq j} v_i^+\neq \emptyset$.
Hence by the third part of Theorem \ref{thm:Goncharov} we have
\begin{align*}
\alpha_{\mathrm G}(D[v_1,\dots,v_k])
&=
\sum_{\substack{ j=1 \\ \text{$v_j$ no coloop}}}^k(-1)^{j}
\alpha_{\mathrm G}([\bar v_1,\dots,\widehat { \bar v}_j,\dots \bar v_k])
\\&=
\sum_{\substack{ j=1 \\ \text{$v_j$ no coloop} \\ \text{$j$ good} } }^k(-1)^{j}
\GonL_{n-1}(\bar v_1^+,\dots,\widehat { \bar v}_j^+,\dots \bar v_k^+)
\\&=
\sum_{\substack{ j=1 \\ \text{{$v_j$ no coloop}} \\ \text{$j$ good}}}^k(-1)^{j}
\GonL_{n-1}(v_1^+\cap v_j^+,\dots,\widehat {v_j^+\cap v_j^+},\dots v_k^+\cap v_j^+)
=0
\end{align*}
for $k=2n-1$. Here $j$ is good if there is no $i\neq j$ such that $v_i$ and $v_j$ are collinear, because in this case the quotient sharbly would have a zero vector. Note that the $j$ that are good and such that $v_j$ is no coloop are exactly the admissible $j$ in the sense of Theorem \ref{thm:Goncharov}.

Finally, for odd $n\geq 3$ consider the sharbly $\Phi(W_n)$ as in \eqref{equ:Phi Wn}. If we interpret the columns as the coefficient vectors of hyperplane equations, then the resulting hyperplane configuration is precisely that of Proposition \ref{prop:Goncharov nonvanishing}.
Hence by that proposition $\alpha_{\mathrm G}(\Phi(W_n))\neq 0$ as desired.
\end{proof}

For later use we also note:
\begin{lemma}\label{lem:alphaG Lie like}
    The cocycle $\alpha_{\mathrm G}$ is Lie-like, in the sense that
    \[
    \alpha_{\mathrm G}(xy) =0
    \]
    for all $x,y\in \shB_{\C}$ of positive degree.
\end{lemma}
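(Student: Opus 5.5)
The plan is to argue directly from Goncharov's integral formula \eqref{equ:psi G def}, \eqref{equ:psi is L} and show that the integrand vanishes identically whenever the hyperplane configuration comes from a product sharbly. By linearity it suffices to take $x=[v_1,\dots,v_{k_1}]\in\Sh(\C^{n_1})$ and $y=[w_1,\dots,w_{k_2}]\in\Sh(\C^{n_2})$, both non-zero and of positive degree. Positive degree forces $k_i\geq 1$, and non-vanishing forces the vectors to span, so $n_i\geq 1$ and $k_i\geq n_i$. Their product is $xy=[v_1,\dots,v_{k_1},w_1,\dots,w_{k_2}]\in\Sh(V\oplus W)$, where $V=\C^{n_1}$, $W=\C^{n_2}$ and $n=n_1+n_2$. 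If $k_1+k_2\neq 2n$, then $\alpha_{\mathrm G}(xy)=0$ by definition, so I assume $k_1+k_2=2n$. If $k_1=k_2=1$ then $n=2$ and $k=2\neq 4$, so at least one factor has length $\geq 2$. By the symmetric roles of the two factors (graded commutativity of the product together with skew-symmetry, Theorem~\ref{thm:Goncharov}(1)), I may assume $k_1\geq 2$. Using skew-symmetry again, I order the arguments as $(v_1,v_2,\dots,v_{k_1},w_1,\dots,w_{k_2})$, so that the two distinguished vectors in $\psi_n$ both lie in $V$.

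Write homogeneous coordinates $z=(z',z'')$ with $z'\in V$ and $z''\in W$. Then $v_\alpha\cdot z=v_\alpha\cdot z'$ and $w_\beta\cdot z=w_\beta\cdot z''$. The integrand of $\psi_n$ is a function times a wedge of $2n-2$ real $1$-forms. These $1$-forms split into two groups.
\begin{itemize}
\item The $k_1-2$ forms $d\log|v_\alpha\cdot z'/v_1\cdot z'|$ for $\alpha\geq 3$. These are pulled back along $z\mapsto [z']\in\mathbb{P}(V)$, a space of real dimension $2n_1-2$, on the dense open set where $z'\neq 0$.
\item The $k_2$ forms $d\log|w_\beta\cdot z''/v_1\cdot z'|$. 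Each equals $d\log|w_\beta\cdot z''/w_1\cdot z''|+d\log|w_1\cdot z''/v_1\cdot z'|$. The first summand is pulled back from $\mathbb{P}(W)$, of real dimension $2n_2-2$. The second summand is one fixed $1$-form, independent of $\beta$. So at each point these $k_2$ forms lie in a subspace of the cotangent space of dimension at most $2n_2-1$.
\end{itemize}

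A wedge of $m$ covectors from a space of dimension $<m$ vanishes. Hence the wedge of the first group vanishes pointwise unless $k_1-2\leq 2n_1-2$, and the wedge of the second group vanishes unless $k_2\leq 2n_2-1$. If both inequalities held, then $k_1+k_2\leq 2n-1<2n$, contradicting $k_1+k_2=2n$. So the top form in \eqref{equ:psi G def} vanishes on a dense open subset of $\CP^{n-1}$, and therefore $\psi_n=0$ and $\GonL_n=0$ by \eqref{equ:psi is L}. This gives $\alpha_{\mathrm G}(xy)=0$.

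The main point requiring care is the second group. One must observe that the $k_2$ forms have at most $2n_2-1$ independent directions, not $2n_2-2$, because of the single mixed form $d\log|w_1\cdot z''/v_1\cdot z'|$. This extra dimension is exactly compensated by the fact that the log-prefactor and the two distinguished vectors $v_1,v_2$ are placed on the $V$ side, which is why the argument needs the reduction to $k_1\geq 2$. I would also remark that vanishing of the integrand on a dense open set suffices, since the integral in \eqref{equ:psi G def} is absolutely convergent; I take this as part of Goncharov's setup.
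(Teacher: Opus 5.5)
Your argument is correct and is essentially the paper's proof. The paper also reduces to the case where the first factor has length at least $2$, and shows that the top-degree integrand vanishes pointwise on a dense set. It does this by observing that every $1$-form in the integrand is annihilated by the generator $\xi$ of the circle action $(z',z'')\mapsto(z',e^{i\phi}z'')$. This is the same linear-algebra fact as your dimension count: merging your two groups, all $2n-2$ forms lie in a space of dimension at most $(2n_1-2)+(2n_2-1)=2n-3$, which is exactly the annihilator of $\xi$.
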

\begin{proof}
    Let $x = [v_1,\dots, v_k] \in \Sh(\C^m)$ and $y=[w_1,\dots, w_\ell]\in \Sh(\C^n)$, with all $v_i,w_j\neq 0$ and $k,\ell\geq 1$. We may assume that $k+\ell=2m+2n$, for otherwise $\alpha_{\mathrm G}(xy)=0$ by degree reasons.
    In particular, $k+\ell\geq 4$ and we may assume by symmetry that $k\geq 2$. 
    Then we have to show that
    \begin{equation}\label{equ:agxy}
    0 = 
    \alpha_{\mathrm G}(xy) = 
    C
    \int_{\CP^{m+n-1}} 
\log\left|\frac{v_2\cdot z}{v_1\cdot z}\right|
\bigwedge_{\alpha=3}^{k} d\log\left|\frac{v_\alpha\cdot z}{v_1\cdot z}\right|
\bigwedge_{\beta=1}^{\ell} d\log\left|\frac{w_\beta\cdot z}{v_1\cdot z}\right|
,
    \end{equation}
    where $C$ is an unimportant nonzero constant and we have understood the $v_i$ and $w_j$ as elements of $\C^{m+n}\cong \C^m\oplus \C^n$.
    Consider the circle action on $\CP^{m+n-1}$ defined in homogeneous coordinates as
    \[
    (z_1,\dots,z_{m+n}) \longmapsto (z_1,\dots,z_m, e^{i\phi } z_{m+1},\dots,e^{i\phi} z_{m+n}),
    \] 
    and denote by $\xi$ its generating vector field.
    Denote the integrand of \eqref{equ:agxy} temporarily by $\omega$. It is a differential form of the top degree $2m+2n-2$, and satisfies $\iota_\xi\omega=0$, with $\iota_\xi$ the contraction with $\xi$. Since $\xi$ is non-vanishing on a dense subset this implies that $\omega=0$ on this subset, and hence the integral \eqref{equ:agxy} indeed vanishes.
    %The integrand above is a top form and is basic under this action and the generating vector field of the action is nonzero on a dense subset. Hence the integrand vanishes on a dense subset and the integral is zero. 
\end{proof}

\subsection{Consequences}
\begin{cor}\label{cor:Wn def injective}
    The morphism $\Phi$ of Theorem \ref{thm:main 2} sends the tadpole $t$ and the wheel classes $W_n$ for odd $n\geq 3$ to nonzero classes in $H(\shB, d+D)$. 
\end{cor}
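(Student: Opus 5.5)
The plan is to detect each class by a $(d+D)$-cocycle $\shB\to\C$ that pairs non-trivially with its image. If $\alpha\circ(d+D)=0$ and $\alpha(\Phi(x))\neq 0$, then $\Phi(x)$ cannot be a $(d+D)$-boundary: for $\Phi(x)=(d+D)y$ we would get $\alpha(\Phi(x))=\alpha((d+D)y)=0$. First, $\Phi(t)$ and $\Phi(W_n)$ are $(d+D)$-cycles, because $t$ and $W_n$ are $(d+\nabla)$-cycles in $U^c\G_0^+$ and Theorem \ref{thm:main 2}(i) (established in Section \ref{sec:deformed D}) makes $\Phi$ a chain map for the deformed differentials.

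For the wheel classes I would use the Goncharov cocycle $\alpha_{\mathrm G}$ restricted along $\shB\to\shB_\C$. By Theorem \ref{thm:alpha G} it satisfies $\alpha_{\mathrm G}\circ(d+D)=0$ and $\alpha_{\mathrm G}(\Phi(W_n))\neq0$ for odd $n\geq3$. That theorem already contains the hard analytic input, namely Goncharov's functional equations and Proposition \ref{prop:Goncharov nonvanishing}, so this case is immediate. I only need to note that the composite $(\shB,d+D)\to(\shB_\C,d+D)\to\C$ is a cocycle, which holds because extension of scalars $\Z\subset\C$ commutes with $d$ and with $D$. Here $D$ on $\shB_\C$ is given by \eqref{equ:D simplified} and on $\shB$ by \eqref{equ:D formula}. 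One needs a small check that the coloop terms, which do not cancel integrally (Remark \ref{rem:coloop cancellation}), match after tensoring up. This holds because passing to $\GL(n,\C)$-coinvariants identifies saturations and quotients of different lattices in the same way.

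For the tadpole, $\Phi(t)=[a]\in\Sh(\Z)_{\GL(1,\Z)}$ with $a=\pm1$. I would use $\eta$ itself as the detecting functional. By Lemma \ref{lem:eta}, $\eta\circ d=0$ and $(\eta\otimes\eta)\Delta=0$. I then need $\eta\circ D=0$, which should follow from the general formula $\eta(\eta\cdot x)=\pm(\eta\otimes\eta)\Delta x$ together with the vanishing of the latter. Alternatively I would check it directly: $D$ lowers length by one, so only length-two sharblies in rank $\le2$ can contribute, and these produce the cancelling pairs computed in the proof of Lemma \ref{lem:eta}. Then $\eta(\Phi(t))=1\neq0$.

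The main obstacle I expect is this last point: making sure $D$ applied to rank-one length-two sharblies $[a,b]\in\Sh(\Z)$ does not produce a nonzero rank-one or rank-zero contribution detected by $\eta$. Here $\eta$ only sees rank one, and $D[a,b]$ lands in rank zero, where $\eta$ vanishes. Rank-two sharblies $[v_1,v_2]$ map under $D$ to rank-one length-one terms, whose $\eta$-coefficients cancel by the same computation as $(\eta\otimes\eta)\Delta[v_1,v_2]=0$. So I would carry out exactly this low-degree bookkeeping with the signs of \eqref{equ:D formula}.
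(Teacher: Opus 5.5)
Your proposal is correct and follows the paper's proof. You detect $\Phi(W_n)$ with the Goncharov cocycle $\alpha_{\mathrm G}$ via Theorem \ref{thm:alpha G}, and you detect $\Phi(t)$ with $\eta$, using $\eta d=0$ and the fact that $\eta D$ is a nonzero multiple of $(\eta\otimes\eta)\Delta=0$. Your extra check that $\shB\to\shB_\C$ intertwines the deformed differentials is automatic from the functoriality of twisting in Section \ref{sec:Hopf twisting}, since $\eta$ on $\shB_\C$ pulls back to $\eta$ on $\shB$.
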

\begin{proof}
    First consider the wheel classes.
    Since $W_n$ is a cycle in $(U^c\G_0^+,d+\nabla)$, we have that $\Phi(W_n)$ is a cycle in $(\shB, d+D)$.
    If $\Phi(W_n)$ were exact, i.e., $\Phi(W_n)=(d+D)x$ for some $x$, then $\alpha_{\mathrm G}(\Phi(W_n))=\alpha_{\mathrm G}((d+D)x)=0$, in contradiction to the last assertion of Theorem \ref{thm:alpha G}.

    We may proceed similarly for the tadpole class $t$. Here we consider the cocycle $\eta$ of Lemma \ref{lem:eta}. By the proof of that lemma, $\eta$ satisfies $\eta d=0$ and $(\eta\otimes \eta )\Delta=0$. By definition of $D$ as the action of $\eta$ the latter equation is equivalent to $\eta D =0$. Hence $\eta(d+D)=0$.
    Furthermore, $\eta(\Phi(t))=1$, so that we may conclude as before that $\Phi(t)$ is nonzero in $H(\shB, d+D)$.
\end{proof}

\begin{rem}\label{rem:wheel indep}
    Note that the wheel classes $W_n$ and also the tadpole $t$ are cycles also in the undeformed complex  $(U^c\G_0^+,d)$. Furthermore, by Theorem \ref{thm:alpha G} the Goncharov cocycle $\alpha_{\mathrm G}$ is also a cocycle with respect to the undeformed differential, i.e., $\alpha_{\mathrm G}\circ d=0$.
    Likewise, $\eta \circ d=0$ by the proof of Lemma \ref{lem:eta}.
    Hence the same proof as above also shows that $\Phi$ sends the $W_n$ ($n\geq 3$ odd) and $t$ to nonzero classes in $H(\shB, d)$, thus in particular giving an independent proof of Lemma \ref{lem:wheelnonzero} above.
\end{rem}

%\subsection{The Brown--Borel cocycle} 
%The Brown--Borel cocycle is defined on a sharbly $x=[v_1,\dots,v_k]$ of rank $n$ as 
%\begin{equation}\label{equ:alpha B def}
%\alpha_{B}(x)
%=
%\begin{cases}
%\int_{\Delta_{k-1}}\psi(x)^* \omega^{k-1}
%&\text{if $k=6,10,\dots$}
%\\
%0 &\text{otherwise}
%\end{cases},
%\end{equation}
%where $\psi(x)\colon \Delta_{k-1}\to X_n/\GL(n,\Z)$ is the relative cycle as in \eqref{equ:psi def}, and $\omega^{k-1}\in \Omega({P_n}/\GL(n,\Z))$ is the Borel form of degree $k-1$ as in \cite{BrownSchnetz}.
%Note that $\omega^{k-1}$ is defined only in the interior of the simplex and a priori has singularities on the boundary.
%The following result was essentially shown by Brown.
%\begin{thm}[{Adaptation of \cite[Theorem 14.13]{BrownBordification}}] \label{thm:alpha B}
%The integrals defining $\alpha_B$ converge and the resulting morphism is a cocycle, i.e., $\alpha_B\circ (d+D) = 0$.
%Furthermore, the image of the wheel graph $\alpha_{B}(\Phi(W_n))$ is non-zero for any odd $n\geq 3$.
%\end{thm}
%\begin{proof}[Proof sketch]
%The statement is essentially a special case of \cite[Theorem 14.13]{BrownBordification}. However, in loc. cit. it is formulated for a slightly different version of chains. Nevertheless, the proof goes through also for our chains that are in the image of $\psi$ in the same manner.
%\end{proof}

%Since we only we sketched the proof here, none of the other results of this paper will depend on the cocycle $\alpha_B$.

\subsection{Relation to the Rossi--Willwacher integrals}
\label{sec:RWvsG}
Another cocycle on $U^c\G_0^+$ has been described by Rossi--Willwacher \cite{RossiWillwacher}, as a side product of their computation of a family of Drinfeld associators.
%This cycle $\alpha_{\mathrm RW}$ is zero unless the number of edges is equal to $2v-2$, where $v$ is the number of vertices, and itassociates to a graph $\Gamma$ with $v$ vertices, $k$ edges $e_1=(a_1,b_1),\dots,e_k=(a_k,b_k)$ and no tadpoles the number 
This cocycle $\alpha_{\mathrm RW}$ associates to a graph $\Gamma$ with $v$ vertices and $k$ edges the number 
\[
\alpha_{\mathrm RW}(\Gamma)=
\int_{\mathrm{Conf}_v(\C)/\C^\times \ltimes \C}
\sum_{i=1}^k
\sum_{j\neq i}
(-1)^{\epsilon(i,j)}
\log|z_{a_j}-z_{b_j}|
\bigwedge_{\alpha=1\atop \alpha \neq i,j}^k
d\arg(z_{a_\alpha}-z_{b_\alpha}).
\]
If $k\neq 2v-2$ the integrand is not a top degree form and $\alpha_{\mathrm RW}(\Gamma)=0$.
Here the integral is over the quotient of the configuration space of $v$ points in $\C$ by translations, rotation and scaling, with $(z_1,\dots,z_v)$ being the natural coordinates of a configuration of points.
The sign is 
\begin{equation}\label{equ:eps ij}
(-1)^{\epsilon(i,j)} 
=
\begin{cases}
    (-1)^{i+j-1} & \text{if $i<j$} \\
    (-1)^{i+j} & \text{if $i>j$}
\end{cases}
\end{equation}
and the argument form is
\[
d\arg(z) = \frac{1}{2i}\left( \frac{dz}{z} - \frac{d\bar z}{\bar z}\right).
\]
In case $\Gamma$ has a tadpole or is the empty graph, we set $\alpha_{\mathrm RW}(\Gamma)=0$. 
Note that in order to have a nonzero integral there must be as many holomorphic as antiholomorphic form components, since we are integrating over a complex manifold. It follows that we may equivalently replace $d\arg(z)$ by $d\log|z|$ in the integrand, so that
\[
\alpha_{\mathrm RW}(\Gamma)=
C
\int_{\mathrm{Conf}_v(\C)/\C^\times \ltimes \C}
\sum_{i=1}^k
\sum_{j\neq i}
(-1)^{\epsilon(i,j)}
\log|z_{a_j}-z_{b_j}|
\bigwedge_{\alpha=1\atop \alpha \neq i,j}^k
d\log|z_{a_\alpha}-z_{b_\alpha}|,
\]
for $C$ some unimportant constant that depends on $k$, but not on $\Gamma$.
Furthermore, it is convenient to eliminate the quotient from the integration domain by fixing the positions of the points $z_1=0$ and $z_2=1$. We may furthermore assume that vertices 1 and 2 in our graph $\Gamma$ are connected by edge $e_1=(1,2)$, so that the only nonzero remaining terms in the integral are
\begin{equation}\label{equ:RW simplified}
\alpha_{\mathrm RW}(\Gamma)=C
\int_{(z_3,\dots,z_v)\in \C^{v-2}}
\sum_{j=2}^k
(-1)^{j}
\log|z_{a_j}-z_{b_j}|
\bigwedge_{\alpha=2\atop \alpha \neq j}^k
d\log|z_{a_\alpha}-z_{b_\alpha}|.
\end{equation}
We will use below:
\begin{lemma}\label{lem:alphaRW bridge}
    If the graph $\Gamma\in U^c\G_0^+$ is not connected, or if it has a bridge then $\alpha_{\mathrm RW}(\Gamma)=0$.
\end{lemma}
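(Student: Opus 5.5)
We need to show $\alpha_{\mathrm RW}(\Gamma)=0$ when $\Gamma$ is disconnected or has a bridge. The plan is to exhibit, in each case, a one-parameter group acting on the integration domain along which the integrand is basic (or whose contraction with the top form vanishes). This is the same trick as in the proof of Lemma \ref{lem:alphaG Lie like}.

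\emph{Disconnected case.} Write $\Gamma=\Gamma_1\sqcup\Gamma_2$ with $\Gamma_2$ nonempty, and suppose the normalization vertices $1,2$ lie in $\Gamma_1$. Consider the action of $\C^\times\ltimes\C$ on the positions $z_w$ of the vertices $w\in V\Gamma_2$ alone, namely $z_w\mapsto \lambda z_w+c$. Every factor $\log|z_{a}-z_{b}|$ or $d\log|z_a-z_b|$ coming from an edge of $\Gamma_2$ changes at most by the additive constant $\log|\lambda|$ under this action. Factors coming from edges of $\Gamma_1$ are invariant.

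We then take the generating vector fields $\xi$ (real dimension $4$ if $\Gamma_2$ has at least two vertices, which it does since it has no isolated vertices) and contract them with the integrand $\omega$ in \eqref{equ:RW simplified}. The $d\log$ factors are pulled back along the map $z\mapsto (z_a-z_b)_{\text{edges}}$, and the $\Gamma_2$-edge differences span only a space on which the group acts. Hence the form built from the $d\log|z_a-z_b|$ of $\Gamma_2$ edges is pulled back from the quotient configuration space of $\Gamma_2$ modulo the group. That quotient has real dimension $2|V\Gamma_2|-4$, while the number of $\Gamma_2$ edges appearing is at least $|E\Gamma_2|-2$.

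\emph{Degree count.} This is the step I expect to need care, because the $\log$ term is not a form and the count must account for it. Since all vertices are at least trivalent, or the component is a tadpole (which gives $\alpha=0$ by definition), we get $|E\Gamma_2|\ge \tfrac32|V\Gamma_2|$. Top degree requires $k=2v-2$. Distributing, one of the two components carries more than its share of forms, i.e. more $d\log$ factors than the dimension of its quotient. Those forms are pulled back from a lower-dimensional space and so vanish. Concretely, I would compare $|E\Gamma_i|-\#\{\text{omitted edges in }\Gamma_i\}$ with $2|V\Gamma_i|-3$ for the component not containing the fixed points, or $2|V\Gamma_i|-4$ for the other. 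I would show that the inequality $k-2 = 2v-4 = (2v_1-4)+(2v_2-4)+4$ forces one component to exceed its bound. Since scaling of $\Gamma_2$ changes the $\log$ by a constant, we may replace the $\log$ term by a term with a constant and handle it identically.

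\emph{Bridge case.} Let $e$ be a bridge separating $\Gamma-e$ into $\Gamma_1\ni 1,2$ and $\Gamma_2$. Act by translations of $\Gamma_2$'s vertices only: $z_w\mapsto z_w+c$. Only the factor from $e$ is not invariant. The form $\omega$ then satisfies $\iota_\xi\omega$ equal to the terms in which $d\log|z_e|$ is contracted. Combining this with rotation and scaling of $\Gamma_2$ about the endpoint of $e$ in $\Gamma_2$ (which fixes all factors except $e$ up to constants), we have a $4$-dimensional symmetry with only one non-invariant complex coordinate, the bridge difference, which contributes at most $2$ real form degrees. Therefore $\omega$ restricted to the orbit directions has rank at most $2<4$, so contraction with a suitable nonzero combination of generators kills $\omega$. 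Hence $\omega=0$ on a dense set and the integral vanishes.

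The main obstacle is handling the $\log$ (non-form) factor and the sign sum in the integrand properly. The cleanest route is to note that the integrand is $\sum_j\pm\log|\cdot|\,\bigwedge d\log$, and that $d$ of it is $\bigwedge_{\alpha\ge2} d\log$ times $(k-1)$-type combinatorics; then argue on each summand separately as above.
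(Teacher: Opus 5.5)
Your mechanism, a direction along which the top-degree integrand has zero contraction, is the one the paper uses. However, you treat scaling as a symmetry, and that makes several of your intermediate claims false. Contraction only sees the one-form factors, and $\iota_\xi\, d\log|z_x-z_y|=\xi(\log|z_x-z_y|)$. So a transformation that shifts $\log|z_x-z_y|$ by a constant does \emph{not} annihilate $d\log|z_x-z_y|$: for the scaling field $\xi_{\mathrm{sc}}$ of $\Gamma_2$ one has $\iota_{\xi_{\mathrm{sc}}}\,d\log|z_x-z_y|=1$ for every edge $(x,y)$ of $\Gamma_2$.

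This has two consequences. In the disconnected case, the $\Gamma_2$-forms are not pulled back from the quotient by $\C^\times\ltimes\C$ (dimension $2|V\Gamma_2|-4$), only from the quotient by translations and rotations (dimension $2|V\Gamma_2|-3$). In the bridge case, your parenthetical is backwards: rotation and scaling about the endpoint $b_e\in V\Gamma_2$ of the bridge $e=(a_e,b_e)$ fix $|z_{a_e}-z_{b_e}|$ exactly, and it is the $\Gamma_2$-edge factors that scaling fails to annihilate.

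Your final bounds happen to survive, but not for the reasons you state:
\begin{itemize}
\item In the disconnected case, the pigeonhole $2v-4>(2|V\Gamma_1|-4)+(2|V\Gamma_2|-3)$ still works.
\item In the bridge case, the factors restricted to the four orbit directions span at most two functionals: one from the bridge under translations, one from scaling.
\end{itemize}
Also, the disconnected degree count is only announced (``I would show\dots''), not carried out.

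The fix makes almost all of this unnecessary. The rotation you already mention, $z_w\mapsto z_{b_e}+e^{i\phi}(z_w-z_{b_e})$ for $w\in V\Gamma_2$, preserves \emph{every} $|z_x-z_y|$ occurring in \eqref{equ:RW simplified}. Edges of $\Gamma_2$ move rigidly, edges of $\Gamma_1$ are fixed, and the bridge has one endpoint in $\Gamma_1$ and the other at the center. So its generator $\xi$ annihilates every $d\log$ factor. Since $\iota_\xi(f\beta)=f\,\iota_\xi\beta$, the $\log$ coefficient plays no role at all. Then $\iota_\xi\omega=0$, with $\omega$ of top degree and $\xi\neq0$ on a dense set (some vertex of $\Gamma_2$ differs from $b_e$), gives $\omega=0$. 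The disconnected case is identical, using a rigid rotation of all of $\Gamma_2$.

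This is exactly the paper's proof, except that the paper normalizes so that the bridge itself is $e_1=(1,2)$ and rotates one side about $z_1=0$. You can drop the translations, the scaling, the valence bound $|E\Gamma_2|\ge\frac32|V\Gamma_2|$, and any special treatment of the $\log$ term.
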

\begin{proof}
    If $\Gamma$ has a tadpole, then $\alpha_{\mathrm RW}(\Gamma)=0$, so we may assume that $\Gamma$ has no tadpole.
    Consider next the case that $\Gamma$ has a bridge.
    We may assume by renumbering that our edge $e_1=(1,2)$ is a bridge. Removing $e_1$ hence disconnects $\Gamma$ into two disjoint subgraphs $\Gamma_1$ and $\Gamma_2$, containing vertex 1 (resp. 2), and each containing more than one vertex. Let us assume $\Gamma_1$ contains the vertices $1,3,\dots,r$. Consider the circle action 
    \[
    (z_3,\dots,z_v) \longmapsto (e^{i\phi}z_3,\dots,e^{i\phi}z_r, z_{r+1},\dots, z_v), 
    \]
    and denote by $\xi$ the corresponding generating vector field on $\C^{v-2}$. The vector field $\xi$ is non-vanishing on a dense subset and its contraction with the integrand of \eqref{equ:RW simplified} vanishes. Hence $\alpha_{\mathrm RW}(\Gamma)=0$, cf. the argument used in proof of Lemma \ref{lem:alphaG Lie like}.
    %Denote the integrand of \eqref{equ:RW simplified} $\alpha_{\mathrm RW}(\Gamma)=0$.
    %temporarily by $\omega$. It is a differential form of the top degree $2v-4$, and satisfies $\iota_\xi\omega=0$, with $\iota_\xi$ the contraction with $\xi$. Since $\xi$ is non-vanishing on a dense subset this implies that $\omega=0$ on this subset, and hence $\alpha_{\mathrm RW}(\Gamma)=0$.$\alpha_{\mathrm RW}(\Gamma)=0$.
    
    Finally assume $\Gamma$ is not connected, say $\Gamma=\Gamma_1\sqcup \Gamma_2$ for non-empty subgraphs $\Gamma_1$ and $\Gamma_2$, necessarily both containing more than one vertex. Pick edge $e_1=(1,2)$ in $\Gamma_1$. Then the integrand is basic under the circle action on the coordinates corresponding to the vertices of $\Gamma_2$, and we conclude as before that $\alpha_{\mathrm RW}(\Gamma)=0$. We also note that (essentially) the same argument shows more generally that $\alpha_{\mathrm RW}(\Gamma)=0$ if $\Gamma$ is not biconnected.
\end{proof}

On the other hand, recall from \eqref{equ:psi G def} the function $\psi_n$ that defines, up to a nonzero prefactor $\alpha_{\mathrm G}([v_1,\dots,v_{k}])$.
Let us rewrite the expression \eqref{equ:psi G def} in a form closer to \eqref{equ:RW simplified}.
By the first part of Theorem \ref{thm:Goncharov} the function $\psi_n(v_1,\dots,v_{2n})$ is in fact antisymmetric in the arguments, so we may antisymmetrize without changing the function. We may furthermore pick explicit complex coordinates for the integration by setting $z_1=1$. Finally, we may assume we have chosen our coordinates so that $v_1\cdot z = z_1$. With these simplifications \eqref{equ:psi G def} becomes
\begin{equation}\label{equ:psi G 2}
\psi_n(v_1,\dots,v_{2n})
=
\frac 1{2n-1}
\int_{(z_2,\dots,z_n)\in \C^{n-1}} 
\sum_{j=2}^{2n}(-1)^{j}
\log|v_j\cdot z|
\bigwedge_{\alpha=2\atop \alpha\neq j}^{2n} d\log|v_\alpha\cdot z|.
\end{equation}

Comparing \eqref{equ:RW simplified} and \eqref{equ:psi G 2} (see also \eqref{equ:psi is L}), we arrive at the following result:
\begin{prop}\label{prop:alpha RW G}
There are non-zero constants $C_n$ with the following property.
Let $\Gamma\in U^c\G_0$ be a graph of loop order $\beta_1(\Gamma)=n$.
Then we have 
\begin{equation}\label{equ:alpha RW G}
\alpha_{\mathrm RW}(\Gamma) = C_n \alpha_{\mathrm G}(G(\Phi(\Gamma))),
\end{equation}
where $G\colon\shB_\Q\to \shB_\Q$ is the Gale duality morphism of Section \ref{sec:gale}.
\end{prop}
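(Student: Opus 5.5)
We will compare the two integrals \eqref{equ:RW simplified} and \eqref{equ:psi G 2} directly, using the description of $G(\Phi(\Gamma))$ given in Example \ref{ex:Gale dual graph}.

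\textbf{Reductions.} First we dispose of the degenerate cases.
\begin{itemize}
\item If $\Gamma$ has a bridge, then $\Phi(\Gamma)$ has a zero vector, so $G(\Phi(\Gamma))=0$ by definition. Also $\alpha_{\mathrm RW}(\Gamma)=0$ by Lemma \ref{lem:alphaRW bridge}.
\item If $\Gamma$ is disconnected (and bridgeless), say $\Gamma=\Gamma_1\sqcup\Gamma_2$, then $\Phi(\Gamma)=\Phi(\Gamma_1)\Phi(\Gamma_2)$. The Gale dual of a block sum is the block sum of the Gale duals, so $G(\Phi(\Gamma))$ is again a product of two positive-degree sharblies. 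Hence $\alpha_{\mathrm G}(G(\Phi(\Gamma)))=0$ by Lemma \ref{lem:alphaG Lie like}, while $\alpha_{\mathrm RW}(\Gamma)=0$ by Lemma \ref{lem:alphaRW bridge}.
\item Tadpoles do not occur in $U^c\G_0$.
\end{itemize}
So we may assume $\Gamma$ is connected and bridgeless, with $v$ vertices and $k$ edges, so that $n=k-v+1$.

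\textbf{Degree matching.} The Gale dual $G(\Phi(\Gamma))$ lives in rank $v-1$: by Example \ref{ex:Gale dual graph}, $V^G_x\cong\ker\iota^*\subset C_0(\Gamma;\Q)$ has dimension $v-1$. By definition $\alpha_{\mathrm G}$ is nonzero only when $k=2(v-1)$. This is exactly the condition $k=2v-2$ under which $\alpha_{\mathrm RW}$ can be nonzero. Both sides therefore vanish unless $k=2v-2$, i.e.\ $n=v-1$, which is why the constant may depend only on the loop order $n$. In that case we set $m=v-1$.

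\textbf{Identifying the integrands.} By Example \ref{ex:Gale dual graph}, $G(\Phi(\Gamma))$ is the sharbly $[w_1,\dots,w_k]$ given by the edge--vertex incidence map $\partial\colon C_1\to\ker\iota^*$, so $w_j=[b_j]-[a_j]$ for $e_j=(a_j,b_j)$.

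Dualize: $(\ker\iota^*)^*=C^0/\Q\cdot\mathbf 1$, i.e.\ vertex coordinates $(z_1,\dots,z_v)$ modulo translation. Then for $z$ in $(\ker\iota^*)^*\otimes\C$, up to sign,
\[
w_j\cdot z=z_{b_j}-z_{a_j},
\]
and homogeneous coordinates on $\CP^{m-1}$ add the quotient by $\C^\times$. This identifies $\CP^{m-1}$ birationally with $\mathrm{Conf}_v(\C)/\C^\times\ltimes\C$. The identification is an isomorphism on the dense open set where all $w_j\cdot z\neq0$, and the complement has measure zero for the integrals.

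Now match the normalizations. Choose $e_1=(1,2)$ and normalize $w_1\cdot z=z_2-z_1=1$ with $z_1=0$. This is exactly the affine chart of \eqref{equ:psi G 2}, with coordinates $z_3,\dots,z_v$. Under this chart the integrand of \eqref{equ:psi G 2} for $(w_1,\dots,w_{2m})$ becomes term-by-term the integrand of \eqref{equ:RW simplified}. The signs $(-1)^j$ agree, and the sign ambiguity of each $w_j$ is harmless, since $\log|\cdot|$ and $d\log|\cdot|$ are insensitive to it. Hence
\[
\alpha_{\mathrm RW}(\Gamma)=C\,(2m-1)\,\psi_m(w_1,\dots,w_{2m}).
\]
Using \eqref{equ:psi is L}, we absorb $C$, the factor $2m-1$, and the prefactor $-(-4)^{m-1}(2\pi i)^{1-m}\binom{2m-2}{m-1}^{-1}$ into a nonzero constant $C_n$, where $m=n$.

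\textbf{Main obstacle.} The delicate step is bookkeeping rather than conceptual. We must check the following.
\begin{itemize}
\item The orientation and sign conventions, including the choice of isomorphism $V^G_x\cong\ker\iota^*$ and the passage to coinvariants, produce a sign that is uniform in $\Gamma$. Any residual sign from reordering edges so that $e_1=(1,2)$ is the same on both sides, since both sides are antisymmetric in the edge ordering.
\item Convergence of the integrals and the birational change of domain do not introduce boundary contributions.
\end{itemize}
For the first point we would use antisymmetry of both expressions (Theorem \ref{thm:Goncharov}(1) and the orientation relation) to reduce to a single fixed ordering. For the second we would note that both integrals converge absolutely, as stated by Goncharov and Rossi--Willwacher, and that the domains differ only in a measure-zero set.
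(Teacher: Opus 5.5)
Your proposal is correct and follows essentially the same route as the paper. You dispose of bridges and disconnected graphs via Lemmas \ref{lem:alphaRW bridge} and \ref{lem:alphaG Lie like}, observe that both sides can be nonzero only when $k=2(v-1)$, and then identify \eqref{equ:psi G 2} for the incidence-matrix sharbly of Example \ref{ex:Gale dual graph} with \eqref{equ:RW simplified} in the chart $z_1=0$, $z_2=1$.

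The one slip is your claim that tadpoles do not occur in $U^c\G_0$: in this paper's conventions a tadpole at a vertex of valence $\geq 3$ is allowed. That case is immediate, as the paper notes: $\alpha_{\mathrm RW}(\Gamma)=0$ by definition, and the tadpole edge is a coloop of $\Phi(\Gamma)$, so $G(\Phi(\Gamma))=0$.
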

\begin{proof}
Let $\Gamma$ be a graph with $k$ edges and $p$ vertices.
If $p=0$ both sides of \eqref{equ:alpha RW G} are zero.
If $\Gamma$ has a tadpole, then $\alpha_{\mathrm RW}(\Gamma)=0$ by definition and $\alpha_{\mathrm G}(G(\Phi(\Gamma)))=0$ because the tadpole contributes a coloop to $\Phi(\Gamma)$ which becomes a null vector in $G(\Phi(\Gamma))$.
If $\Gamma$ has a bridge, then $G(\Phi(\Gamma))=0$, and $\alpha_{\mathrm RW}(\Gamma)=0$ by Lemma \ref{lem:alphaRW bridge}. 
So assume that $\Gamma$ is non-empty, bridgeless and tadpole-free.
We may also assume throughout that the numbering of vertices and edges has been chosen so that vertices 1 and 2 are connected by the first edge.
Note that by Example \ref{ex:Gale dual graph} the sharbly 
\[
x:=[v_1,\dots,v_k] := G(\Phi(\Gamma))
\]
is (essentially) the edge-vertex incidence matrix of the graph $\Gamma$, or more precisely the matrix of the boundary operator $\Q^k\cong C_1(\Gamma)\to \ker \iota^* \subset C_0(\Gamma)$.
So we have 
\[
\alpha_{\mathrm G}(G(\Phi(\Gamma)))= \begin{cases}
    C_{n'} \psi_{n'}(v_1,\dots,v_k) & \text{if $k=2n'$} \\
    0 & \text{otherwise}
\end{cases},
\]
with $C_{n'}$ a non-zero constant and
\[n'=\vdim \ker \iota^* = \underbrace{\vdim C_0(\Gamma)}_{=p}-\vdim H_0(\Gamma).\]

Hence $\alpha_{\mathrm G}(G(\Phi(\Gamma)))$ is only non-zero if $k=2n'$. Also, if $\Gamma$ was not connected, then $x$ would have a block form in the appropriate basis
\[
x= \begin{pmatrix}
    * & 0 \\
    0 & *
\end{pmatrix},
\]
and hence be a nontrivial product, so that $\alpha_{\mathrm G}(G(\Phi(\Gamma)))=0$ by Lemma \ref{lem:alphaG Lie like}.
%and the integrand of \eqref{equ:psi G 2} is basic under rotation ($z\mapsto e^{i\phi}z$) of the coordinates corresponding to the second block. Hence in this case $\psi_{n'}(v_1,\dots,v_k)=0$. 
%(Alternatively, we could have 
So $\alpha_{\mathrm G}(G(\Phi(\Gamma)))$ is nonzero only for $\Gamma$ connected and in that case $n=n'$, $k=2n$ and $p=n+1$.

On the other hand, observe that by degree counting the expression $\alpha_{\mathrm RW}(\Gamma)$ is zero unless $k=2p-2$. Furthermore, if $\Gamma$ is not connected, then $\alpha_{\mathrm RW}(\Gamma)=0$ by Lemma \ref{lem:alphaRW bridge}. But in the connected case we again have $n=k-p+1=p-1$ and hence $k=2n$.

Thus we may assume throughout that $\Gamma$ is a connected graph with $k=2n=2n'$ edges and $p=n+1$ vertices, for otherwise both sides of \eqref{equ:alpha RW G} are zero. 
But then, using our assumption that vertices 1 and 2 are connected by the first edge, we see that \eqref{equ:RW simplified} and \eqref{equ:psi G 2} are identical, up to an unimportant prefactor. From \eqref{equ:psi is L} the proposition hence follows for $\Gamma$.
\end{proof}

\newcommand{\BC}{{\mathbf{B}}}
\newcommand{\ft}{{\mathfrak{t}}}
\section{Proof of Theorem \ref{thm:PID lower}}

\subsection{A morphism of dg Lie algebras}
We will use the complex dg Hopf algebra $\BC:= \shB_\C \otimes \C$, with the undeformed differential $d$. Following \eqref{equ:fgA def} define the associated dg Lie algebra 
\[
\fg_\BC := Q(\BC)^*.
\]
We already know the following closed elements in this dg Lie algebra:
\begin{itemize}
    \item The element $\eta$ of cohomological degree $1$ of Section \ref{sec:deformed D}, which by Lemma \ref{lem:eta} is a cocycle in $\fg_\BC$. 
    \item The parts $\alpha^{m}_G$ of the Goncharov cocycle of homogeneous degree $m$. They are in $\fg_\BC$ by Lemma \ref{lem:alphaG Lie like} and closed by Theorem \ref{thm:alpha G}.
\end{itemize}

Also consider the graded Lie algebra 
\[
\ft  := \C T \oplus \FreeLie(\sigma_3,\sigma_5,\dots)
\]
where $T$ is a generator of degree $1$, and $\sigma_m$ for $m\geq 3$ odd is a generator of degree $2m$.

\begin{lemma}
    The morphism 
    \[
    f \colon \ft \longrightarrow \fg_\BC
    \]
    defined on generators as $f(T)=\eta$, $f(\sigma_m)=\alpha^{2m}_G$
    is a morphism of dg Lie algebras.
\end{lemma}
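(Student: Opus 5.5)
Since $\ft$ carries the zero differential and is, apart from $T$, freely generated, a morphism of dg Lie algebras out of $\ft$ is fixed by three pieces of data. First, the images of the generators must be closed elements of $\fg_\BC$. Second, the images must respect the single relation that is not free, namely that $T$ is central; the bracket $[T,T]$ also has to come out right. Third, $T$ spans a direct summand that is abelian and commutes with the free part. My plan is to check exactly these conditions. Closedness is already established: $d\eta=0$ holds by Lemma \ref{lem:eta}, and each $\alpha^{2m}_G$ lies in $\fg_\BC$ by Lemma \ref{lem:alphaG Lie like} and is closed, $\alpha_{\mathrm G}\circ d=0$, by Theorem \ref{thm:alpha G}. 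The free generators $\sigma_m$ therefore impose no further conditions. What remains is to show $[\eta,\eta]=0$, which Lemma \ref{lem:eta} gives, and $[\eta,\alpha^{2m}_G]=0$ for all odd $m\ge3$.

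The bracket in $\fg_\BC=Q(\BC)^*$ is dual to the cobracket on indecomposables. Concretely, $[\lambda,\mu]=(\lambda\otimes\mu)\Delta\mp(\mu\otimes\lambda)\Delta$, evaluated on the reduced coproduct. Compare this with the action \eqref{equ:lie on hopf} of $\eta$ on $\BC$, which defines $D$:
\[
D x=-\sum\bigl(\eta(x')x''\mp \eta(x'')x'\bigr).
\]
Pairing with $\alpha_{\mathrm G}$, I expect $[\eta,\alpha^{2m}_G]=\pm\,\alpha^{2m}_G\circ D$ up to a harmless sign. The identity I want is the statement that the coadjoint action of $\eta$ on $\fg_\BC$ is dual to its action on $\BC$. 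I will verify it directly from the Sweedler formulas, taking care that $\eta$ has degree $1$ when tracking Koszul signs. Once this holds, Theorem \ref{thm:alpha G} gives $\alpha_{\mathrm G}\circ D=0$ degreewise, since $D$ maps degree $2m+1$ to degree $2m$ and $\alpha^{2m}_G$ is the degree-$2m$ piece. This yields $[\eta,\alpha^{2m}_G]=0$.

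The step I expect to be the main obstacle is matching the abstract bracket on $Q(\BC)^*$ with the action on $\BC$ precisely, including the signs and the role of the counit terms. I will first write $[\lambda,\mu]$ as a functional on $\BC$ vanishing on $I^2$, and note that $\eta$ kills $1$. Then $(\eta\otimes\alpha)\Delta$ and $(\alpha\otimes\eta)\Delta$ are exactly $\alpha$ applied to the two sums making up $-Dx$. Everything then reduces to the stated identities. The remaining possible subtlety is whether $\fg_\BC$ must also see relations such as $[\eta,\eta]$ in the graded sense. Because $\eta$ is odd, $[\eta,\eta]=2(\eta\otimes\eta)\Delta$, and this vanishes by the proof of Lemma \ref{lem:eta}. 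In $\ft$, the element $T$ has degree $1$ and $[T,T]=0$ is imposed by the abelian summand, so the two sides match and $f$ is well defined.
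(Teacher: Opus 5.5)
Your proposal is correct and is essentially the paper's own argument. Closedness of $\eta$ and $\alpha^{2m}_G$ handles the differentials, and the only relations left to check are $[\eta,\eta]=0$ (Lemma \ref{lem:eta}) and $[\eta,\alpha^{2m}_G]=0$, which is equivalent up to sign to $\alpha^{2m}_G\circ D=0$ (Theorem \ref{thm:alpha G}); your Sweedler computation simply spells out that equivalence, which the paper asserts without proof.
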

\begin{proof}
    Since $\eta$ and the $\alpha^{2m}_G$ are cocycles, it is clear that $f$ respects the differentials.
    It then suffices to check the commutation relations 
    \begin{align*}
    [\eta,\eta] &=0
    & 
    [\eta, \alpha^{2m}_G] &=0.
    \end{align*}
    The former relation has been verified in Lemma \ref{lem:eta}. The latter is equivalent to $\alpha^{2m}_G\circ D=0$ and has been verified in Theorem \ref{thm:alpha G}.
\end{proof}

\subsection{Morphism to the graph complex}
Our morphism $\Phi\colon U^c \G_0^+ \to \shB$ may be prolonged by the homomorphism of dg Hopf algebras $\shB:=\shB_{\Z} \to \shB_{\C}$, coming from naturality under ring inclusions of the sharbly construction.
Complexifying and passing to the dual indecomposables, we obtain a morphism of complex dg Lie algebras
\[
\Phi^* \colon \fg_{\BC} \longrightarrow \GC_0^+ \otimes \C.
\]
This morphism sends $\eta$ to $T/2\in \GC_0^+\otimes \C$ (the tadpole graph).
We already know by Proposition \ref{prop:Goncharov nonvanishing} that $\Phi^*(\alpha_{\mathrm G}^{2m})\in \GC_0^+\otimes \C$ is a linear combination of graphs that contains the dual wheel graph $W_m^*\in \GC_0^+\otimes \C$ with non-zero coefficient, for each odd $m\geq 3$. The following observation is then immediate:
\begin{cor}\label{cor:Phiprop}
    The morphism of dg Lie algebras $\Phi^*\circ f\colon \ft \to \GC_0^+\otimes \C$ sends the generator $T\in \ft$ to $T/2\in \GC_0^+\otimes \C$, and the generator $\sigma_m$ to a linear combination of graphs that contains the dual wheel graph $W_m^*$ with non-zero coefficient.
\end{cor}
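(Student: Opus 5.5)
The plan is to compute $\Phi^*\circ f$ on the two kinds of generators of $\ft$ and use two facts: it is a morphism of dg Lie algebras, and the generators of $\ft$ are free. For $T$, compose: $f(T)=\eta$, and by Lemma \ref{lem:eta}(iii) $\Phi^*(\eta)=\frac12 T$. Since $\Phi^*\circ f$ is a composition of dg Lie algebra morphisms (the lemma above together with the dual of the Hopf algebra map $\Phi$, extended by the ring inclusion $\Z\subset\C$), it is itself such a morphism. This settles the first claim.

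For $\sigma_m$, note that $f(\sigma_m)=\alpha_{\mathrm G}^{2m}$. Here $\Phi^*(\alpha_{\mathrm G}^{2m})$ is the functional $\Gamma\mapsto \alpha_{\mathrm G}(\Phi(\Gamma))$ on the degree-$2m$ part of $\G_0^+$, which consists of connected graphs with $2m$ edges. This functional descends to indecomposables because $\alpha_{\mathrm G}$ is Lie-like (Lemma \ref{lem:alphaG Lie like}) and $\Phi$ is multiplicative. We expand this element of $\GC_0^+\otimes\C$ in the basis of dual graphs, using the pairing convention of Remark \ref{rem:gc pairing}. The coefficient of $W_m^*$ equals $\alpha_{\mathrm G}(\Phi(W_m))$ up to the nonzero factor $|\mathrm{Aut}(W_m)|^{\pm1}$ and a sign.

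It remains to show that $\alpha_{\mathrm G}(\Phi(W_m))\ne 0$. By \eqref{equ:Phi Wn}, $\Phi(W_m)=[e_1,\dots,e_m,e_1-e_2,\dots,e_m-e_1]$. Its column vectors, read as hyperplane equations, give exactly the configuration of Proposition \ref{prop:Goncharov nonvanishing}. Nonvanishing then follows from that proposition, as recorded in the last part of Theorem \ref{thm:alpha G}.

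The only delicate point is checking that the coefficient extraction is legitimate. Concretely, we must make sure that $W_m$ is a nonzero basis element of $\G_0^+$ for odd $m$, so that $W_m^*$ is a genuine basis vector, and that no other graph basis element contributes to that coefficient. Both follow from graphs forming a basis up to sign and from $W_m$ having no orientation-reversing automorphism when $m$ is odd.
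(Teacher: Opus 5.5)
Your proposal is correct and follows the paper's argument: $\Phi^*(\eta)=\frac12 T$ by Lemma \ref{lem:eta}(iii), and the coefficient of $W_m^*$ in $\Phi^*(\alpha_{\mathrm G}^{2m})$ is a nonzero multiple of $\alpha_{\mathrm G}(\Phi(W_m))$, which is nonzero by Proposition \ref{prop:Goncharov nonvanishing} (as recorded in Theorem \ref{thm:alpha G}). The point you flag as delicate is in fact automatic: $\Phi$ is a well-defined linear map, so $\alpha_{\mathrm G}(\Phi(W_m))\neq 0$ already forces $W_m\neq 0$ in $\G_0^+$.
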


\subsection{The Grothendieck--Teichmüller group}
The Grothendieck--Teichmüller Lie algebra $\grt_1$ is a well-studied infinite dimensional Lie algebra introduced by Drinfeld \cite{Drinfeld} in his study of the absolute Galois group of $\Q$. We shall only need the complex version.
It is the subspace 
\[
\grt_1 \subset \FreeLie( X,Y)
\]
of the free Lie algebra in generators $X$ and $Y$ satisfying a set of linear equations. The precise form of these equations will be irrelevant for us, see \cite{WillwacherFreeness} for a recollection. The Lie bracket is the Ihara bracket, which differs from the Lie bracket on the underlying free Lie algebra.
We shall need only the following properties of $\grt_1$.
First, $\grt_1$ is graded by weight, with the weight of a homogeneous element being equal to the number of generators $X$ and $Y$ appearing. We denote the weight $W$ part by $\grt_1^W\subset \grt_1$.
Second, $\grt_1$ computes part of the homology of the graph complex. The following theorem relates $\grt_1$ to the cohomology of the graph complex $\GC_0^{k-\mathrm{loop}}$ with fixed loop order $k$.

\begin{thm}[\cite{WillwacherGrt}]
\label{thm:Wgrt}
For each $k\geq 0$ we have that 
\[
H^{2k}(\GC_0^{k\mathrm{-loop}})\otimes \C \cong \grt_1^{k}.
\]
The isomorphism sends a graph cocycle with non-zero coefficient of the dual wheel graph $W_{2j+1}^*$ to a Lie word with non-zero coefficient in front of the Lie monomial 
\[
\ad_X^{2j} Y := [X,[X,[\cdots,[X,Y]\cdots]]] \quad \text{($2j$ brackets).}
\]
Furthermore, $H^{<2k}(\GC_0^{k\mathrm{-loop}})=0$.
\end{thm}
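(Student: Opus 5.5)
The plan is to deduce Theorem \ref{thm:Wgrt} from Willwacher's theorem $H^0(\GC_2)\cong\grt_1$ together with $H^{<0}(\GC_2)=0$. The only work is regrading and keeping track of the loop order.

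\emph{Regrading.} For a connected graph with $v$ vertices, $e$ edges and loop order $k=e-v+1$, the usual $\GC_2$ degree is $2(v-1)-e$. Substituting $v=e-k+1$ gives $2(v-1)-e=e-2k$. Our grading on $\GC_0$ is the edge count $e$, so the loop order $k$ part of $\GC_0$ in degree $e$ is exactly the loop order $k$ part of $\GC_2$ in degree $e-2k$, up to a global sign convention on orientations. I need to check that the two orientation data agree in the even case. For $\GC_2$ the orientation is an ordering of the edges, i.e.\ edges are odd and vertices even, and that is the convention used here. Both differentials are vertex splitting, i.e.\ the bracket with $L$, and they preserve loop order. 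Hence
\[
H^{e}(\GC_0^{k\mathrm{-loop}})\cong H^{e-2k}(\GC_2^{k\mathrm{-loop}}).
\]
The vanishing statement $H^{<2k}(\GC_0^{k\mathrm{-loop}})=0$ then becomes $H^{<0}(\GC_2)=0$, and $H^{2k}$ becomes the degree-zero cohomology in loop order $k$. Here I will use that $\GC_0$ in the theorem means the trivalent part $\GC_0$. The standard fact that $\GC_2$ and its version with bivalent vertices differ only by the loop classes, which live in nonzero degree, covers this.

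\emph{Matching weight with loop order.} Under Willwacher's isomorphism $H^0(\GC_2)\cong\grt_1$, the loop-order grading goes to the weight (depth-plus-length) grading. I would verify this by tracing the zigzag through the deformation complex of the Gerstenhaber/braces operad. Alternatively one can use that both sides carry compatible gradings, namely the scaling action of the loop-order Euler operator and the weight grading by $X,Y$ count. Tensoring with $\C$ is harmless, and this gives the first claim.

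\emph{Wheel coefficient.} This is the step I expect to be the main obstacle, since it needs the explicit form of the isomorphism rather than just its existence. In Willwacher's proof, the image of Drinfeld's generator $\sigma_{2j+1}\in\grt_1$, whose weight-$(2j+1)$ part has nonzero coefficient of $\ad_X^{2j}Y$, is a cocycle with nonzero wheel coefficient $W_{2j+1}^*$. I would argue via the dual pairing with the wheel cycle $W_{2j+1}$. This pairing defines a linear functional on $H^{2k}$ with $k=2j+1$, and the coefficient of $\ad_X^{2j}Y$ defines a linear functional on $\grt_1^{k}$ (the depth one projection). Both functionals vanish on brackets. For the wheel this holds because every proper subgraph of the wheel has a vertex of valence $\le2$, so the wheel is Lie-primitive; for $\ad_X^{2j}Y$ it holds because the Ihara bracket of elements of depth $\geq1$ has depth $\geq2$. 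Both are nonzero, on the wheel cocycle and on $\sigma_{2j+1}$ respectively. Modulo decomposables each space is one-dimensional in depth one, by Deligne–Ihara depth-one results on the $\grt$ side and by Theorem \ref{thm:nabla cohom} style arguments on the graph side. Hence the two functionals are proportional with a nonzero constant, which is the desired statement.
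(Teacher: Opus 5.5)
\textbf{The regrading is fine; the wheel step has a genuine gap.}

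Your regrading is correct. With edges odd and vertices even, a loop-order-$k$ graph with $e$ edges has $\GC_2$-degree $e-2k$. So the isomorphism and the vanishing $H^{<2k}(\GC_0^{k\text{-loop}})=0$ are Willwacher's $H^0(\GC_2)\cong\grt_1$ and $H^{<0}(\GC_2)=0$. For this you must also take from \cite{WillwacherGrt} that the isomorphism identifies loop order with weight. Your ``alternative'' justification, that both sides carry gradings, proves nothing: an abstract isomorphism need not intertwine two given gradings, so this has to come from the construction. The paper itself gives no argument and cites \cite{WillwacherGrt} for the whole statement, including the wheel clause.

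The gap is in the wheel step. You have two nonzero functionals on the same space: $w=\langle W_{2j+1},-\rangle$, and $c\circ\phi$ with $c$ the $\ad_X^{2j}Y$-coefficient. Both vanish on brackets, and you need them proportional, i.e.\ $\ker w=\ker(c\circ\phi)$. Vanishing on brackets only means both factor through the indecomposables $\grt_1^{2j+1}/\{\grt_1,\grt_1\}^{2j+1}$. To conclude you would need this quotient to be one-dimensional. That is part of the Deligne--Drinfeld conjecture that $\grt_1$ is generated by the $\sigma_{2j+1}$, and it is open.

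The Ihara/Deligne depth-one result does not fill the gap. It says the depth-one graded piece is spanned by $\ad_X^{2j}Y$, i.e.\ that $\ker c$ is exactly the depth $\geq 2$ part of $\grt_1^{2j+1}$. There is no depth filtration on the graph side. The assertion that $\phi$ carries $\ker w$ onto the depth $\geq 2$ part is precisely what you are trying to prove. Theorem \ref{thm:nabla cohom} does not help either: it computes the $\nabla$-twisted cohomology and gives no bound on the indecomposables of $H(\GC_0,d)$.

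To close the gap you need the explicit form of the isomorphism. Concretely, you must show that the wheel coefficient of the cocycle attached to $\psi\in\grt_1^{2j+1}$ is a nonzero multiple of the $\ad_X^{2j}Y$-coefficient of $\psi$. No abstract dimension count gives this; it is what the citation of \cite{WillwacherGrt} supplies.
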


Furthermore, the following theorem has been shown by Brown, with a minor addition of the second author.

\begin{thm}[Brown \cite{Brown}, Willwacher \cite{WillwacherFreeness}]
\label{thm:W freeness}
Let $\sigma_{2j+1}\in \grt_1$, $j=1,2,\dots$, be any collection of homogeneous elements of weight $2j+1$ such that the coefficient of $\ad_X^{2j} Y$ in $\sigma_{2j+1}$ is non-zero.
The Lie subalgebra of $\grt_1$ generated by this family is free.
\end{thm}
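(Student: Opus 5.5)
The plan is to reduce the statement to Brown's theorem for one specific family and then transfer freeness to an arbitrary family by a leading-term argument.

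\emph{Step 1: reduction.} Let $F=\FreeLie(s_3,s_5,\dots)$, graded by weight with $s_{2j+1}$ of weight $2j+1$. Let $\Sigma\colon F\to\grt_1$ be the weight-preserving morphism of Lie algebras sending $s_{2j+1}$ to $\sigma_{2j+1}$, using the Ihara bracket on the target. The claim is exactly that $\Sigma$ is injective. Since everything is weight graded with finite-dimensional pieces, it suffices to prove injectivity weight by weight.

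\emph{Step 2: Brown's family.} From Brown's motivic theory I take a particular family $\sigma'_{2j+1}\in\grt_1$. These are the images of generators of the motivic Lie algebra $\fg^{\mathfrak m}$, which is free on them. Its image in $\grt_1$ satisfies the hypothesis, since its depth-one part is a nonzero multiple of $\ad_X^{2j}Y$. Brown's theorem says the corresponding map $\Sigma'\colon F\to\grt_1$ is injective. His proof in fact gives more. Via the Ihara (Lie-)action of $\grt_1$ on the completed free associative algebra $\C\langle\langle X,Y\rangle\rangle$, the enveloping algebra $UF$ acts. Brown shows that this action is ``nondegenerate'' in the following sense: there are linear functionals $\partial_{2j+1}$, namely coefficient extraction of the depth-one words $\ad_X^{2j}Y$ composed with the action, which detect all words in the generators. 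This is his $f$-alphabet argument.

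\emph{Step 3: transfer to an arbitrary family.} I would rerun Brown's injectivity argument with $\sigma$ in place of $\sigma'$, checking that only the hypothesis on the $\ad_X^{2j}Y$ coefficient is used. Concretely, I expect the composite of the action of a word $\sigma_{i_1}\cdots\sigma_{i_r}\in UF$ with the detecting functionals $\partial_{2j_1+1}\cdots\partial_{2j_r+1}$ to be triangular with respect to a suitable ordering, for instance by weight and then lexicographically. The diagonal entries should be products of the nonzero coefficients $c_{2j+1}$ of $\ad_X^{2j}Y$ in $\sigma_{2j+1}$. The higher-depth components of $\sigma_{2j+1}$ should only contribute to strictly lower terms in that ordering. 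Triangularity with nonzero diagonal then gives injectivity of $UF\to\operatorname{End}(\C\langle\langle X,Y\rangle\rangle)$, and hence of $\Sigma$.

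\emph{Main obstacle.} The hard step is the triangularity claim in Step 3. A naive depth filtration argument does not work. The associated depth-graded Lie algebra is \emph{not} free, because of Ihara's relations coming from period polynomials. So one cannot simply replace $\sigma$ by its depth-one part. I would first try to verify that Brown's detecting functionals are defined on all of $\grt_1$, not just on $\fg^{\mathfrak m}$. I would then check that they vanish on higher-depth corrections to the leading order needed. If this fails, the fallback is a perturbative argument. Write $\sigma_{2j+1}=c_{2j+1}\sigma'_{2j+1}/c'_{2j+1}+\rho_{2j+1}$ with $\rho_{2j+1}$ of depth $\ge2$. Then show, by induction on weight, that injectivity of $\Sigma'$ is an open condition in the finite-dimensional space of choices, and that it persists along the segment $t\mapsto\sigma'+t\rho$, using again the detecting functionals to control the determinant.
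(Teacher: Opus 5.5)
Your Step 1 is fine, but Step 3 is the entire proof, and the mechanism you propose for it cannot work. The paper only quotes this theorem, so I am judging the proposal on its own terms.

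First, be clear about where the content lies. If all $\sigma_{2j+1}$ lie in Brown's motivic Lie algebra $\mathfrak g^{\mathfrak m}\subset\grt_1$, nothing needs to be rerun. Indeed, $\mathfrak g^{\mathfrak m}$ is free with one-dimensional indecomposables in each odd weight $\geq 3$, and the $\ad_X^{2j}Y$-coefficient vanishes on Ihara brackets. So the $\sigma_{2j+1}$ are simply another free generating set. The real issue is elements of $\grt_1$ not known to be motivic, since $\grt_1=\mathfrak g^{\mathfrak m}$ is an open conjecture.

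For those elements, your triangularity claim is false as stated. You assert that the diagonal is given by products of the $\ad_X^{2j}Y$-coefficients and that higher-depth parts contribute only lower-order terms. Consider the Ihara action $\psi\cdot f=D_\psi f-f\psi$ (with $D_\psi X=0$, $D_\psi Y=[Y,\psi]$).
\begin{itemize}
\item It is a Lie action of all of $(\FreeLie(X,Y),\{\,,\})$, not just of $\grt_1$.
\item It satisfies $\delta_t(\psi\cdot f)=(\delta_t\psi)\cdot\delta_t f$ for the rescaling $\delta_t\colon Y\mapsto tY$.
\end{itemize}
Hence an argument using only the leading coefficients and formal properties of this action applies verbatim to the depth-one parts $\bar\sigma_{2j+1}=c_{2j+1}\ad_X^{2j}Y$. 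These are the $t\to0$ limits of $t^{-1}\delta_t\sigma_{2j+1}$, and injectivity for $t\neq 0$ is equivalent to injectivity for $\sigma$. But the $\ad_X^{2j}Y$ satisfy Ihara's weight-$12$ relation $\{\ad_X^2Y,\ad_X^8Y\}=3\{\ad_X^4Y,\ad_X^6Y\}$. So a nonzero element of $UF$ acts by zero, and no such triangular detecting matrix exists. This is exactly the depth-graded obstruction you mention yourself. It shows that membership in $\grt_1$ must enter essentially, and your proposal never says where.

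Your description of Brown's detecting mechanism is also inaccurate. He does not detect words by depth-one coefficients. He works with the Hoffman elements $\zeta^{\mathfrak m}(w)$, $w\in\{2,3\}^\times$, filtered by level (the number of $3$'s).
\begin{itemize}
\item The level-graded parts of the operators $\partial_{2r+1}$ are governed by the pairings of $\sigma_{2r+1}$ with the level-one elements $\zeta(\{2\}^a,3,\{2\}^b)$, $a+b+1=r$. These are coefficients of $\sigma_{2r+1}$ on words of depth $r$, i.e. precisely the higher-depth data you want to push into lower-order terms.
\item For motivic generators these pairings are forced by the motivic version of Zagier's evaluation, whose proof uses the period map. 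They become specific rational multiples of the $\ad_X^{2r}Y$-coefficient.
\item Invertibility of the resulting matrices is a $2$-adic computation with those specific numbers.
\end{itemize}
To transfer this to arbitrary $\sigma_{2r+1}\in\grt_1$, you need an extra input that the proposal does not supply. For example, you would have to show that every element of $\grt_1$ of weight $2r+1$ pairs with $\zeta(\{2\}^a,3,\{2\}^b)$ as Zagier's formula modulo products predicts. You would also need the level-lowering property of the $\partial_{2r+1}$ in this setting.

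Your fallback does not close the gap either. Zariski-openness shows that $\sigma'+t\rho$ generates a free subalgebra for all but finitely many $t$ in each weight. It gives nothing at the specific value $t=1$, and the natural rescaling $\delta_t$ does not preserve $\grt_1$, so it cannot move you to $t=1$. ``Controlling the determinant'' at $t=1$ is just the original problem restated.
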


\subsection{Combined map and injectivity}

From Corollary \ref{cor:Phiprop} and Theorems \ref{thm:Wgrt} and \ref{thm:W freeness} we immediately conclude:

\begin{cor}\label{cor:inj}
    The composition of morphisms of graded Lie algebras 
    \[
    \ft \xrightarrow{f} H(\fg_{\BC}) \xrightarrow{\Phi^*} H(\GC_0^+) \otimes \C
    \]
    is injective.
\end{cor}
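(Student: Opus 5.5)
The plan is to show that the composite $\Phi^*\circ f$ is injective on $\ft$ by splitting $\ft = \C T\oplus \FreeLie(\sigma_3,\sigma_5,\dots)$ and using the loop-order grading on $H(\GC_0^+)$. Since $\ft$ has zero differential, $f$ lands in cocycles and descends to $H(\fg_\BC)$. Both $\Phi^*$ and $f$ are morphisms of graded Lie algebras, so the composite sends a Lie word in the $\sigma_m$ to the corresponding Lie word in the classes $\Phi^*\circ f(\sigma_m)$. By Corollary \ref{cor:Phiprop}, $T\mapsto T/2$, which is a nonzero class in loop order $1$: it pairs to $1$ with the tadpole $t$, and $t$ is a cycle that is not a boundary in $\G_0^+$ for degree reasons. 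Each $\sigma_m$ maps to a graph cocycle $\tau_m$ in loop order $m$ and cohomological degree $2m$, and $\tau_m$ contains $W_m^*$ with nonzero coefficient.

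Next we check that the $T$-summand cannot interfere with the free part. Rank is additive under the bracket, because $\Phi$ respects loop order and rank. Hence every bracket of the $\sigma$'s lives in loop order $\ge 3$. So the image of $\FreeLie(\sigma_3,\sigma_5,\dots)$ lies in $\bigoplus_{k\ge3} H^{2k}(\GC_0^{k\text{-loop}})\otimes\C$, while $T/2$ lies in loop order $1$. It therefore suffices to prove injectivity separately on $\C T$, which is clear, and on the free Lie algebra.

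For the free part we use Theorem \ref{thm:Wgrt}. Direct sums over $k$ of these isomorphisms give an isomorphism of graded Lie algebras $\bigoplus_k H^{2k}(\GC_0^{k\text{-loop}})\otimes\C\cong\grt_1$. We need that this isomorphism respects brackets; this is part of the cited result of \cite{WillwacherGrt}, and I would state it explicitly. A word in the $\sigma_m$ of total loop order $k$ has cohomological degree $2k$, so it indeed lands in $H^{2k}(\GC_0^{k\text{-loop}})$.

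Under this isomorphism the classes $[\tau_m]$ correspond to elements of $\grt_1^{m}$ whose coefficient of $\ad_X^{m-1}Y$ is nonzero. By Theorem \ref{thm:W freeness}, these elements freely generate a Lie subalgebra. The map $\FreeLie(\sigma_3,\sigma_5,\dots)\to\grt_1$ sending generators to free generators is therefore injective. Combining this with the loop-order separation of $T$ yields injectivity of $\Phi^*\circ f$.

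The main obstacle is this last identification: that the graph-cohomology/$\grt_1$ isomorphism is a Lie algebra map that transports the "nonzero $W_m^*$ coefficient" condition to the "nonzero $\ad_X^{m-1}Y$ coefficient" condition. Theorem \ref{thm:Wgrt} as quoted only states this for single cocycles. I would appeal directly to \cite{WillwacherGrt} for the Lie compatibility.
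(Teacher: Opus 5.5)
Your proposal is correct and follows the paper's argument, which deduces the corollary immediately from Corollary \ref{cor:Phiprop} together with Theorems \ref{thm:Wgrt} and \ref{thm:W freeness}. Your loop-order separation of $T$, and your explicit use of the fact that the isomorphism $\bigoplus_k H^{2k}(\GC_0^{k\mathrm{-loop}})\otimes\C\cong\grt_1$ of \cite{WillwacherGrt} respects brackets, spell out what the paper leaves implicit. One small correction: the loop-order additivity you need comes from the bracket on $\GC_0^+$ being additive in loop order, combined with $\Phi^*\circ f$ being a Lie morphism, not from $\Phi$ itself.
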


\subsection{Dual morphisms of Hopf algebras}
Consider again the morphisms of dg Lie algebras 
\[
 \ft \xrightarrow{f} \fg_\BC := Q(\BC)^*.
\]
Since $\ft$ is degree-wise finite dimensional, this is the same data as a morphism of dg Lie coalgebras 
\[
 Q(\BC) \longrightarrow \ft^*,
\]
and this in turn is the same data as a morphism of dg Hopf algebras
$\BC \to U^c\ft^*$. Prolonging this latter morphism by the map $\Phi$ and taking homology we obtain from Corollary \ref{cor:inj}:

\begin{cor}\label{cor:surj}
The composition of morphisms of graded Hopf algebras 
\[
H(U^c\G_0^+)\otimes \C \longrightarrow 
H(\BC) \longrightarrow U^c\ft^*
\cong \Lambda_\C ( t) \otimes T_\C(s_3,s_5,\dots)
\]
is surjective.
\end{cor}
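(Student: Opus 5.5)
The plan is to dualize Corollary \ref{cor:inj}, using that everything in sight is degreewise finite dimensional. Let $\psi\colon \BC\to U^c\ft^*$ be the dg Hopf algebra map corresponding to $f$, and consider the composite
\[
\Psi\colon U^c\G_0^+\otimes\C \xrightarrow{\Phi} \BC\xrightarrow{\psi} U^c\ft^*.
\]
This is again a morphism of dg Hopf algebras. Since $U^c\ft^*$ carries the zero differential, $\Psi$ is the Hopf map classified by the dg Lie coalgebra morphism $\G_0^+\otimes\C\to\ft^*$, i.e.\ dual to $\Phi^*\circ f\colon \ft\to \GC_0^+\otimes\C$. Here we use that $(U^c\G_0^+)$ has indecomposables $\G_0^+$ and $\fg_{U^c\G_0^+}=\GC_0^+$.

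On homology, $H(U^c\G_0^+)\otimes\C\cong U^c(H(\G_0^+))\otimes\C$ by the rational PBW/Milnor--Moore statement for (co)enveloping (co)algebras in characteristic zero, since $U^c\fc\cong S\fc$ and homology commutes with $S$. The induced map on homology is therefore $U^c$ applied to the dual of $H(\Phi^*\circ f)\colon \ft\to H(\GC_0^+)\otimes\C$, using the finiteness of $\G_0^+$ in each degree. By Corollary \ref{cor:inj} this Lie algebra map is injective, so its dual $H(\G_0^+)\otimes\C\to\ft^*$ is surjective degreewise. The ingredients are that each graded piece is finite dimensional (each fixed edge number gives finitely many graphs) and that the universal coefficient theorem identifies $H(\G_0^+)^*$ with $H(\GC_0^+)$.

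It remains to show that $U^c$ preserves surjections of Lie coalgebras. Identify $U^c\fc\cong S\fc$ as coalgebras, or use the cofree conilpotent structure with the symmetrization isomorphism. A degreewise surjection $\fc\to\fc'$ then induces a surjection $S\fc\to S\fc'$ of the underlying graded vector spaces, and this map is the Hopf map. Finally, $U^c\ft^*\cong \Lambda_\C(t)\otimes T_\C(s_3,s_5,\dots)$ because $\ft=\C T\oplus \FreeLie(\sigma_m)$ has $U\ft\cong \C[T]/(T^2)\otimes T(\sigma_m)$, where $T$ is odd with $[T,T]=0$ and the algebra is the tensor algebra. Its graded dual is the exterior algebra tensored with the shuffle-product tensor coalgebra. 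The stated composite factors through $H(\BC)$ by construction, which gives the surjectivity.

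The main obstacle is the compatibility of the homology identification: checking that the map on homology really is $U^c$ of the dual Lie map, rather than merely agreeing on primitives. I would handle this by noting that both sides are cofree-type conilpotent objects generated by their indecomposables, so a Hopf map into $U^c\ft^*$ is determined by its projection to cogenerators. Surjectivity onto $\ft^*$ at the level of indecomposables, combined with $H(U^c\G_0^+)$ being commutative and $U^c\ft^*$ being generated as an algebra by $\ft^*$ under the shuffle product, then gives surjectivity. Generation under shuffle holds because the symmetrization $S\ft^*\to U^c\ft^*$ is an isomorphism.
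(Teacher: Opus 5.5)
Your proposal is correct and is essentially the paper's argument. You identify $H(U^c\G_0^+)\otimes\C\cong U^cH(\G_0^+)\otimes\C$, reduce via PBW to surjectivity of the Lie coalgebra map $H(\G_0^+)\otimes\C\to\mathfrak t^*$, and obtain that surjectivity by dualizing the injectivity of Corollary~\ref{cor:inj}, using degreewise finite-dimensionality.

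The one point to tighten is which identification $U^c\mathfrak c\cong S\mathfrak c$ you invoke. It should be the natural, differential-compatible PBW (symmetrization) isomorphism of commutative \emph{algebras}, not a coalgebra identification. It should also not be the Oudom--Guin isomorphism of Proposition~\ref{prop:U pre co Lie}: that one is natural only for pre-Lie maps, and for graphs it does not turn $d$ into the derivation extension of the Lie coalgebra differential, because of the disconnecting term $d_b$. With that reading, every step goes through.
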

\begin{proof}
    We yet have to check that the composition $H(U^c\G_0^+)\otimes \C \cong U^cH(\G_0^+)\otimes \C \to U^c\ft^*$ is surjective. By Poincar\'e--Birkhoff--Witt this is equivalent to surjectivity of the morphism of dg Lie coalgebras $H(\G_0^+)\otimes \C \to \ft^*$. This in turn is equivalent to injectivity of the dual map, which holds by Corollary \ref{cor:inj}.
\end{proof}

\subsection{Proof of Theorem \ref{thm:PID lower}}
Let $\RR$ be a PID with an injective morphism of commutative rings $\RR\to \C$. Automatically, the morphism $\Z\to \C$ then factors through $\RR$.
By the naturality of the sharbly construction under injective ring morphisms this induces morphisms of dg Hopf algebras 
\[
\shB = \shB_\Z \longrightarrow \shB_\RR \longrightarrow \shB_\C.
\]
Taking homology, complexifying and prolonging this chain we obtain the morphisms of complex graded Hopf algebras 
\[
H(U^c\G_0^+)\otimes \C \longrightarrow 
\shA_\Z \otimes \C \longrightarrow
\shA_\RR \otimes \C \longrightarrow
\shA_\C \otimes \C \longrightarrow U^c(\ft^*)
\cong \Lambda_\C ( t) \otimes T_\C(s_3,s_5,\dots).
\]
By Corollary \ref{cor:surj} the composition of these morphisms is surjective. Hence so is in particular the composition of the last two arrows, which yields Theorem \ref{thm:PID lower}.
\hfill\qed

\section{Proof of Theorem \ref{thm:main 2}}

We have so far constructed the deformed differential and shown the first part of Theorem \ref{thm:main 2}.
It remains to check the second part, i.e., that the morphism of dg Hopf algebras \eqref{equ:Phi deformed} is a quasi-isomorphism.
We also know from Corollary \ref{cor:KWZ} the homology of the left-hand side of \eqref{equ:Phi deformed}.
In the following subsection we will compute the homology of the right-hand side and see that it is isomorphic. Afterwards, we verify that the morphism $\Phi$ induces a homology isomorphism.

%We already know from \cite{KW} that 
%\[
%H(\G_0,d+\nabla) \cong \vspan_{\Q}( t, W_{3}, W_{5},\dots ),
%\]
%with $t$ the generator dual to $T$ in degree $+1$ and $W_n$ the wheel graph with $n$ spokes and hence $2n$ edges in degree $2n$.
%It follows that for the universal enveloping coalgebra we have
%\[
%H(U^c\G_0,d+\nabla) \cong \Q[ t, W_{3}, W_{5},\dots ]
%\cong H(BK(\Z), \Q).
%\]

%\subsection{The Waldhausen construction and K theory of $\Z$}

\subsection{The Waldhausen construction for \texorpdfstring{$\Proj(\Z)$}{Proj(Z)}}
\label{sec:waldhausen-recollection}

Let
\[
  \mathcal P:=\Proj(\Z)
\]
be the category of finitely generated projective $\Z$-modules.  We use its standard Waldhausen structure: the weak
equivalences are the isomorphisms, and the cofibrations are the injective
maps with projective cokernel. 
% Since $\Z$ is a PID, every object of
%$\mathcal P$ is free and every such cofibration is split, although no splitting
%is included in the data.

For $p\geq 0$, let $\operatorname{Ar}[p]$ be the arrow category of the ordered
set $[p]=\{0<\cdots<p\}$. Its objects are pairs $(a,b)$ with
$0\leq a\leq b\leq p$.  Waldhausen's category $S_p\mathcal P$ \cite[Section~1.3]{Waldhausen} is the full
subcategory of the functor category $\operatorname{Fun}(\operatorname{Ar}[p],\mathcal P)$ consisting
of diagrams $A=(A_{a,b})$ satisfying
\[
  A_{a,a}=0
\]
and such that, for every $a\leq b\leq c$, the structure maps form a
cofibration sequence
\begin{equation}\label{eq:waldhausen-cofibration-sequence}
  0\longrightarrow A_{a,b}\longrightarrow A_{a,c}
   \longrightarrow A_{b,c}\longrightarrow 0.
\end{equation}
Equivalently, an object is a flag of cofibrations
\begin{equation}\label{equ:Wal chain}
  0=A_{0,0}\lhook\joinrel\longrightarrow A_{0,1}
  \lhook\joinrel\longrightarrow\cdots
  \lhook\joinrel\longrightarrow A_{0,p},
\end{equation}
together with the specified quotients $A_{a,b}\cong A_{0,b}/A_{0,a}$.
Let $wS_p\mathcal P\subset S_p\mathcal P$ be the maximal subgroupoid -- i.e., we keep only the object-wise isomorphisms as morphisms.

An order-preserving map $\alpha\colon [m]\to[p]$ induces a pullback functor
$\alpha^*\colon wS_p\mathcal P\to wS_m\mathcal P$. In this way
$p\mapsto wS_p\mathcal P$ is a simplicial groupoid. Applying the nerve in
the groupoid direction gives the bisimplicial set
\begin{equation}\label{eq:waldhausen-bisimplicial-set}
  \mathcal W_{p,q}:=N_qwS_p\mathcal P.
\end{equation}
We define
\begin{equation}\label{eq:BK-waldhausen}
  BK(\Z):=\bigl|\operatorname{diag}\mathcal W\bigr|.
\end{equation}
We write
\[
  C^{\mathrm{Wal}}
  :=\operatorname{Tot}_{p,q}N_p^\Delta N_q^\Delta\mathcal W\otimes\Q
\]
for the normalized total chain complex, where $N^\Delta$ denotes
normalization in the indicated simplicial direction. (Note that we need to distinguish notationally the normalized chains and the nerve functors.) The differential has the form 
\[
\partial_{\rm Wal} = \partial_S + (-1)^p \partial_{\rm nerve},
\]
where $\partial_S=\sum_{j=0}^p(-1)^{j-1}\partial_j$ arises from the simplicial structure on $wS_\bullet \mathcal P$ and $\partial_{\rm nerve}$ is the nerve differential. Note that we shall make the Koszul signs explicit in this section, so that $\partial_{\rm nerve}$ acts on a chain of simplicial degree $p$ in the first direction with a sign $(-1)^p$ as indicated in the formula.

The chain complex $C^{\mathrm{Wal}}$ is filtered by the rank of the
upper-right object:
\begin{equation}\label{eq:waldhausen-rank-filtration}
  F_r wS_p\mathcal P
  :=\{A\in wS_p\mathcal P\mid\operatorname{rk}A_{0,p}\leq r\}.
\end{equation}
This is the rank filtration used in Quillen's spectral sequence, see \cite{QuillenSpSq} and e.g. \cite[Proposition 6.1]{DEKM}. 

\subsection{The cubical configuration construction}
\label{sec:cubical-waldhausen-construction}
\subsubsection{Enhanced ordered configurations}
We now define a cubical groupoid whose cubical boundary contains both the
ordinary sharbly differential $d$ and the rank-changing operator $D$.
Let $[k]=\{1,\ldots,k\}$ and for $V\in \mathcal P$ put
\[
  \overline V:=V/(v\sim-v).
\]
Note that we allow the distinguished element $0\in\overline V$ -- it will be used for cubical degeneracies later. 
For an ordered family
$\mathbf v=(\bar v_1,\ldots,\bar v_k)\in\overline V^k$, denote the saturated span of the vectors $v_i$, $i\in I\subset [k]$ by
\begin{equation}\label{eq:saturated-span-configuration}
  P_I=P_I(\mathbf v)
  :=V\cap\operatorname{span}_{\Q}\{v_i\mid i\in I\},
  \qquad I\subset [k].
\end{equation}

Each $P_I$ is saturated in $V$, and if $I\subseteq J$, then
$P_I$ is saturated in $P_J$; hence $P_J/P_I$ belongs to $\mathcal P$.
Let
\[
  \Lambda(\mathbf v):=\{P_I\mid I\subseteq [k]\}
\]
be the finite poset of saturated spans, and let
$\operatorname{Ar}(\Lambda(\mathbf v))$ be its arrow category. 

\newcommand{\Ar}{\operatorname{Ar}}
\begin{definition}\label{def:enhanced-ordered-configuration}
An \emph{enhanced ordered configuration of length $k$} is a triple
\[
  F=(V,\mathbf v,\mathcal V),
\]
where $V$ is a finitely generated projective $\Z$-module, $\mathbf v\in \bar V^{\times k}$ is a $k$-tuple of elements of $\bar V$ that span $V\otimes \Q$ and 
\[
  \mathcal V\colon\Ar(\Lambda(\mathbf v))\longrightarrow\mathcal P
\]
is a functor, satisfying the following properties:
\begin{enumerate}
    \item For each $A\subset B\subset C$ in $\Lambda(\mathbf v)$ the sequence of structure maps
    \[
    0\longrightarrow \mathcal V(A,B) \longrightarrow \mathcal V(A,C) \longrightarrow \mathcal V(B,C) \longrightarrow 0
    \]
    is exact.
    \item $\mathcal V(0,V)=V$ and $\mathcal V(A,A)=0$ for each $A\in \Lambda(\mathbf v)$.
    \item For each $A\in \Lambda(\mathbf v)$ the map 
    \[\mathcal V(0,A) \longrightarrow \mathcal V(0,V)=V\]
    is an isomorphism onto $A\subset V$.
\end{enumerate} 
\end{definition}

In particular, the axioms imply that $\mathcal V(A,B) \cong B/A$, with a canonical isomorphism encoded by the data.
For $I\subseteq J\subseteq [k]$, put
\[
  F_{I,J}:=\mathcal V(P_I,P_J).
\]
Thus $F_{I,J}=F_{I',J'}$ whenever $P_I=P_{I'}$ and $P_J=P_{J'}$.

A morphism of enhanced ordered configurations from $(V,\mathbf v,\mathcal V)$ to
$(W,\mathbf w,\mathcal W)$ consists of an isomorphism
$f\colon V\longrightarrow W$ carrying $\mathbf v$ to $\mathbf w$, together with a natural family of isomorphisms
\[
  \alpha_{A,B}\colon
  \mathcal V(A,B)\xrightarrow{\cong}\mathcal W(f(A),f(B))
\]
whose component $\alpha_{0,V}$ equals $f$.
We denote the
resulting groupoid of enhanced ordered configurations of length $k$ by $\mathcal E_k$.

\subsubsection{Discussion and motivation}
Before continuing, let us pause and discuss the meaning behind this definition, and compare it to the Waldhausen construction.
Objects in the Waldhausen category are flags of modules as in \eqref{equ:Wal chain} together with fixed choices of the quotients $A_{a,b}\cong A_{0,b}/A_{0,a}$. One could argue that this data is irrelevant because we could just set $A_{a,b}:=A_{0,b}/A_{0,a}$. However, the relations required by the simplicial structure would then not necessarily hold -- or rather they would hold up to isomorphism, but not with equality as is required. Hence, to fix the problem, an object in the Waldhausen category has to carry along a fixed repository of representatives for the isomorphism classes of all possible quotients, along with the explicit isomorphism to the quotient.

The idea behind the enhanced ordered configurations is to do the same for the cubical version. Namely, along with the collection of vectors $\mathbf v$ in $\bar V$ we store a repository of explicit representatives for all possible quotients, together with an isomorphism to the quotient.
We call this data $\mathcal V$ the enhancement.
For a fixed
$(V,\mathbf v)$, its groupoid of enhancements is contractible: any two
choices are joined by the unique isomorphism obtained from their
canonical identifications with $B/A$.
We also remark that we index the choices of quotients not by pairs of subsets of $[k]$, but by the pairs of the corresponding subspaces of $V$. This will be convenient shortly.

\subsubsection{Cubical structure}
For $j\in [k]$, define two face functors
\[
  C_j,R_j\colon\mathcal E_k\longrightarrow\mathcal E_{k-1}.
\]
Fix an object $F=(V,\mathbf v, \mathcal V)$ with $\mathbf v=(v_1,\dots,v_k)$ and let $S=[k]\setminus\{j\}$. We will use subsets of $S$ to index elements of the image of $F$ below, with the understanding that afterwards we re-index to $[k-1]\cong S$.
\begin{itemize}
    \item Let $F'=(V',\mathbf v', \mathcal V'):= C_j F$. Then the new ambient module is $V'=F_{\{j\},[k]}\cong V/P_{\{j\}}$. Let 
        \[
        q_j \colon V=F_{\emptyset, [k]} \longrightarrow F_{\{j\},[k]}=V'
        \]
        be the structure map. The new marked vectors are the $q_j(v_i)$ for $i\in S$, i.e., $v_j$ is omitted and the remaining vectors are transported along $q_j$.
        Their saturated spans are 
        \begin{align*}
        P_I' &= q_j(P_{I\cup \{j\}})
        =
        \im(F_{\{j\},I\cup \{j\}}\longrightarrow V')\quad (\text{for $I\subseteq S$}).
        \end{align*}
        Note that 
        \begin{equation}\label{equ:subs eq well1}
        P_{I}' \subseteq P_J' \iff 
        P_{I\cup \{j\}} \subseteq P_{J\cup \{j\}}
        \end{equation}
        and likewise for equality.
        This means that we obtain a well-defined functor 
        \begin{gather*}
            Q_j \colon \Lambda(\mathbf v') \longrightarrow \Lambda(\mathbf v) \\
            P_I' \longmapsto P_{I\sqcup \{j\}} \quad (\text{for $I\subseteq S$}).
        \end{gather*}
        We then define the functor $\mathcal V'$ as the composition  
        \[
        \Ar(\Lambda(\mathbf v')) \xrightarrow{\Ar Q_j} \Ar(\Lambda(\mathbf v)) \xrightarrow{\mathcal V} \mathcal P.
        \]
    \item Similarly, let $F'=(V',\mathbf v', \mathcal V'):= R_j F$. Then the new ambient module is $V'=F_{\emptyset,S}$ with $S=[k]\setminus\{j\}$. Let
        \[
        u_j \colon V' = F_{\emptyset,S} \longrightarrow F_{\emptyset,[k]}=V
        \]
        be the structure map. It is injective with image $P_S$. 
        The new marked vectors are the unique lifts $v_i'$ of the $v_i$ under $u_j$, for $i\in S$.
        Their saturated spans are 
        \[
        P_I' = u_j^{-1}(P_I)= \im(F_{\emptyset, I} \longrightarrow V')\quad \text{for } I\subseteq S.
        \]
        Hence 
        \[
        P_{I}' \subseteq P_J' \iff 
        P_{I} \subseteq P_{J}
        \]
        and likewise for equality. Therefore, we obtain again a functor 
        \begin{gather*}
            U_j \colon \Lambda(\mathbf v') \longrightarrow \Lambda(\mathbf v) \\
            P_I' \longmapsto P_{I} \quad (\text{for $I\subseteq S$}),
        \end{gather*}
        and define the functor $\mathcal V'$ as the composition  
        \[
        \Ar(\Lambda(\mathbf v')) \xrightarrow{\Ar U_j} \Ar(\Lambda(\mathbf v)) \xrightarrow{\mathcal V} \mathcal P.
        \]
\end{itemize}
The axioms of an enhanced ordered configuration for $C_jF, R_jF$ are immediately inherited from $F$.
The definition of $C_j, R_j$ on (iso)morphisms is also evident.

Inserting a zero vector defines a degeneracy
$s_j\colon \mathcal E_{k-1}\to\mathcal E_k$. 
Since all faces are restrictions of
one fixed diagram, the cubical identities hold strictly. 

\begin{rem}
It is here that we need our repository of fixed representatives of all quotients. For example, one of the cubical identities is, for $i<j$,
\[
C_i C_j = C_{j-1} C_i.
\]
A naive construction would assign on the left the ambient object $V$ quotiented by $v_j$ then $v_i$, and on the right $V$ quotiented by $v_i$ then $v_j$. Although both are evidently isomorphic, they are not equal, so the above equality of functors would not hold. Our construction solves this by assigning instead at both sides the same representative for the quotient from the repository.
Moreover, if the $j$th vector is zero, then $P_{I\cup\{j\}}=P_I$ and hence $C_jF=R_jF$ literally. 
Here we used that we index the repository of quotients by the submodules $P_I, P_J$ and not just the collection of indices $I,J$.
\end{rem}

\subsubsection{The cubical/simplicial chain complex}
Apply the nerve in the groupoid direction to obtain $N_q\mathcal E_k$.
This is a cubical object in simplicial sets.  Its normalized cubical--simplicial
total complex with rational coefficients is
\begin{equation}\label{eq:standard-cubical-total-complex}
  X^\square
  :=\operatorname{Tot}_{k,q}N_k^\square N_q^\Delta N_\bullet(\mathcal E_\bullet),%\otimes\Q,
\end{equation}
with cubical differential
\begin{equation}\label{eq:standard-cubical-boundary}
  \partial_\square
  =\sum_{j=1}^k(-1)^{j-1}(R_j-C_j).
\end{equation}
The total differential is the cubical differential plus the nerve differential
\begin{equation}\label{equ:partial tot}
\partial_{\rm tot} = \partial_\square + (-1)^k \partial_{\rm nerve}.
\end{equation}
The bigraded piece $X_{k,q}^\square$ carries a natural action of the symmetric group $S_k$. We define the complex of coinvariants 
\begin{align}\label{equ:X def}
X &:= \bigoplus_{k,q} X_{k,q} &
X_{k,q} &:=  (X_{k,q}^\square \otimes \mathrm{sgn}_k)_{S_k}.
\end{align}
The differential \eqref{equ:partial tot} evidently descends to the coinvariants, and we continue to use the same symbol for the differential on $X$.

\begin{rem}
    Note that the cubical degeneracy maps insert a copy of the zero vector. The resulting chains are quotiented out upon passing to the normalized chains. This reproduces a relation analogous to \eqref{equ:sh null rel} on sharblies.
\end{rem}

%The nerve differential is included using the standard Koszul sign convention.

\subsection{Comparison with the Waldhausen construction}
\label{sec:triangulation-waldhausen-comparison}

\newcommand{\sgn}{\mathrm{sgn}}
\begin{proposition}\label{prop:cubical-waldhausen-comparison}
\label{prop:deformed homology}
There is a zigzag of quasi-isomorphisms of chain complexes
\begin{equation}\label{eq:cubical-waldhausen-zigzag}
  (\shB,d+D)
  \xleftarrow{\ \pi\ }X
  \xrightarrow{\ \rho\ }C^{\mathrm{Wal}}_\bullet
\end{equation}
with $X$ as in \eqref{equ:X def}.
Consequently,
\[
  H_\bullet(\shB,d+D)
  \cong H_\bullet(BK(\Z);\Q).
\]
\end{proposition}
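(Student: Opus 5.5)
Both maps will be defined explicitly and each will be checked to be a quasi-isomorphism by filtering by rank, comparing associated graded complexes, and using Lemma \ref{lem:AMP chi} and Solomon--Tits.

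\emph{The map $\pi$.} Define $\pi$ to vanish on $X_{k,q}$ for $q>0$. In nerve degree $0$ it sends an enhanced configuration $(V,\mathbf v,\mathcal V)$ to the class of the sharbly $[v_1,\dots,v_k]\in\Sh(V)_{\GL(V)}$, where each $\bar v_i\in\overline V$ is lifted arbitrarily (allowed by the sign relation \eqref{equ:sh sign rel}). After choosing an isomorphism $V\cong\Z^n$ this lands in $\shB$, and since we take coinvariants the choice does not matter. The sign twist in \eqref{equ:X def} matches the antisymmetry \eqref{equ:sh perm rel}. Degenerate cubes contain a zero vector, so they go to zero by \eqref{equ:sh null rel}. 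Chains with $q\ge1$ must be sent to zero in a way compatible with the differential. Here I use that $\partial_{\rm nerve}$ on a $1$-simplex $F\to F'$ gives $F'-F$, which becomes $0$ in the coinvariants. The cubical faces also match: $R_j$ is the restriction to $P_{[k]\setminus j}$, which is the term $D_2$ when $v_j$ is a coloop and the $d$-term otherwise. The face $C_j$ is the quotient $V/(\Z v_j)_{\sat}$, which gives the term $D_1$. So $\pi\partial_\square=(d+D)\pi$, up to the sign convention $(-1)^{j-1}(R_j-C_j)$ compared with \eqref{equ:D formula} and \eqref{equ:sh diff}. To prove that $\pi$ is a quasi-isomorphism, I would filter both sides by the rank of the ambient module. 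On the associated graded, only $R_j$ with $v_j$ not a coloop survives, because $C_j$ and the coloop part of $R_j$ lower the rank. The graded pieces of $X$ are then, up to the contractible choice of enhancements, homotopy orbits of the groupoid of rank-$n$ modules acting on the normalized $\Sh(\Z^n)$. Since $\Sh(\Z^n)$ has finite stabilizers rationally, its homotopy orbits agree with its coinvariants, and the graded piece of $\pi$ is a quasi-isomorphism. The filtration is bounded below and exhaustive in each degree, so the spectral sequence comparison finishes this step.

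\emph{The map $\rho$.} This should be a triangulation map from cubes to simplices. Given a configuration with ordering, each permutation $\sigma\in S_k$ gives a flag $0\subseteq P_{\{\sigma(1)\}}\subseteq P_{\{\sigma(1),\sigma(2)\}}\subseteq\cdots\subseteq P_{[k]}=V$ in the poset $\Lambda(\mathbf v)$. Its quotients are read off from the enhancement $\mathcal V$, so it is an object of $wS_k\mathcal P$, and the map is compatible with nerve simplices. Set $\rho(F)=\sum_\sigma(-1)^\sigma(\text{that flag})$, in the spirit of the map $\chi$ in \eqref{equ:chi def}. Under this map the faces $R_k$ and $C_1$ of the cube become the outer simplicial faces $\partial_k$ and $\partial_0$. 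The inner faces cancel in pairs under adjacent transpositions, by the same argument as in the proof of Lemma \ref{lem:AMP chi}. Flags with repeated steps are degenerate and vanish after normalization, and the enhancement guarantees that the simplicial identities hold strictly. To prove that $\rho$ is a quasi-isomorphism, I again filter by rank, using \eqref{eq:waldhausen-rank-filtration} on the Waldhausen side. The $E^1$ page on the Waldhausen side is Quillen's: the rank-$n$ graded piece computes $H_\bullet(\GL(n,\Z);\St_n\otimes\Q)$, up to shift, as in \cite{QuillenSpSq,DEKM}. On the $X$ side, by the previous step and Theorem \ref{thm:AMP}, it computes the same groups. The induced map on graded pieces is $\chi$ applied on the $\GL$-homotopy orbits, which is a quasi-isomorphism by Lemma \ref{lem:AMP chi}. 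The conclusion $H(\shB,d+D)\cong H(BK(\Z);\Q)$ then follows from \eqref{eq:BK-waldhausen} together with the Eilenberg--Zilber theorem for the diagonal.

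\emph{Main obstacle.} I expect the hardest part to be verifying that $\rho$ is a chain map on the nose, including the signs in \eqref{equ:partial tot} and the sign twist, together with identifying its rank-graded piece with $\chi$. The difficulty is that the Waldhausen rank-graded piece is a relative complex of flags whose top term has rank exactly $n$, which is a bar-type resolution of the Tits building rather than the building itself. I would first try to show that this associated graded is quasi-isomorphic to $\hat N\T_\bullet(\Q^n)$ with homotopy $\GL(n,\Z)$-orbits, by collapsing the repeated-quotient data using the contractibility of the choice of enhancements.
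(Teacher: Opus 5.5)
Your proposal follows the paper's proof essentially step for step. It uses the same maps $\pi$ and $\rho$ and the same face-by-face chain-map checks, gets the quasi-isomorphism property of $\pi$ from the finiteness of automorphism groups (the paper filters directly by the cubical length $k$ rather than first by rank, but the input is identical), and proves that $\rho$ is a quasi-isomorphism via the rank filtration with Lemma~\ref{lem:AMP chi} as the key input. The obstacle you flag is resolved in the paper along your lines, with one ingredient your enhancement-contractibility idea does not by itself supply: besides forgetting the quotient data (a levelwise equivalence of groupoids onto the flag groupoids $\mF_{n,p}$), one passes to the subcomplex $W_n^{\rm str}\subset\gr_n C^{\mathrm{Wal}}_\bullet$ of diagrams with strictly increasing ranks, which is quasi-isomorphic to $\gr_n C^{\mathrm{Wal}}_\bullet$ because any diagram with $\rk A_{0,i}=\rk A_{0,i+1}$ is isomorphic in $wS_p\mathcal P$ to a degenerate one and hence contributes nothing to normalized nerve homology.
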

We first construct the two maps.
Let $F=(V,\mathbf v, \mathcal V)$ be an enhanced ordered configuration of length $k$. For
$\sigma\in S_k$, set $I_a^\sigma:=\{\sigma(1),\ldots,\sigma(a)\},
  \qquad 0\leq a\leq k$
and let 
\begin{equation}\label{eq:permutation-flag}
  \operatorname{Flag}_\sigma(F)_{a,b}
  :=F_{I_a^\sigma,I_b^\sigma}.
\end{equation}
This defines a functor $\operatorname{Flag}_\sigma \colon \mathcal E_k\to wS_k\mathcal P$.

Now on an element 
\[
x=(F_0 \xrightarrow{f_1} F_1  \xrightarrow{f_2} \cdots \xrightarrow{f_q} F_q) \in X_{k,q}
\]
we define our two maps by 
\begin{align}
\label{equ:rho def}
  \rho(x) &:= \sum_{\sigma\in S_k} (-1)^\sigma \operatorname{Flag}_\sigma(x), \\
  \label{equ:pi def}
  \pi(x) &:= 
  \begin{cases}
     [v_1,\dots,v_k] & \text{if $q=0$}\\
     0 &\text{otherwise}
  \end{cases},
\end{align}
where $v_1,\dots,v_k$ are the marked vectors in the enhanced ordered configuration $F_0$.

\begin{lemma}\label{lem:pi well}
    The map \eqref{equ:pi def} commutes with the differentials, 
    \[
    (d+D) \pi = \pi \partial_{\rm tot}.
    \]
\end{lemma}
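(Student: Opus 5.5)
The plan is to check the identity $(d+D)\pi=\pi\partial_{\rm tot}$ on generators $x\in X_{k,q}$, one value of $q$ at a time. Since $\pi$ vanishes for $q\geq 1$ and $\partial_{\rm tot}$ changes $q$ by at most one, only $q=0$ and $q=1$ need attention. For $q\geq 2$ both sides vanish trivially.

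First I would confirm that $\pi$ is well defined on $X$, which is the normalized cubical--simplicial complex with sign-twisted $S_k$-coinvariants.
\begin{enumerate}
\item Cubically degenerate chains contain a zero vector, so $\pi$ kills them by the nullity relation \eqref{equ:sh null rel}.
\item Twisting by $\mathrm{sgn}_k$ matches the antisymmetry relation \eqref{equ:sh perm rel}.
\item The passage to $\overline V=V/(v\sim -v)$ is harmless by the sign relation \eqref{equ:sh sign rel}.
\item An isomorphism of enhanced configurations $f\colon V\to W$ carries $[v_1,\dots,v_k]$ to $[f(v_1),\dots,f(v_k)]$. These agree after passing to $\GL$-coinvariants, once both modules are identified with $\Z^n$.
\end{enumerate}

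Case $q=1$. Let $x=(F_0\xrightarrow{f}F_1)$. The left side is $(d+D)\pi(x)=0$. On the right, $\partial_\square x$ stays in nerve degree $1$ and is killed by $\pi$. The nerve part gives $\pm(\pi(F_1)-\pi(F_0))$, which vanishes in $\shB$ by item 4 above. So both sides are zero.

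Case $q=0$. Let $x=F=(V,\mathbf v,\mathcal V)$. The nerve differential of a $0$-simplex is zero, so
\[\pi\partial_{\rm tot}F=\sum_{j}(-1)^{j-1}\bigl(\pi R_jF-\pi C_jF\bigr).\]
If some $v_i=0$, both sides vanish. Indeed $\pi F=0$, and for $j\neq i$ the faces $R_jF$ and $C_jF$ still carry the zero vector, while for $j=i$ we have $R_iF=C_iF$ literally. So assume all $v_i\neq 0$.
\begin{enumerate}
\item $\pi C_jF$ is the sharbly $[\bar v_1,\dots,\widehat{\bar v_j},\dots,\bar v_k]$ in $F_{\{j\},[k]}\cong V/P_{\{j\}}=V/(\Z v_j)_{\sat}$, up to the canonical isomorphism, which is invisible in coinvariants. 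With the sign $-(-1)^{j-1}$ these are exactly the terms of $D_1$ in \eqref{equ:D formula}.
\item $\pi R_jF$ is $[v_1,\dots,\hat v_j,\dots,v_k]$ in $F_{\emptyset,S}\cong P_S=\vspan(v_i\mid i\neq j)_{\sat}$.
\item If $P_S$ has full rank, then $P_S=V$ by saturation, and these terms are exactly the summands of $d\pi F$ in \eqref{equ:sh diff}. The indices dropped from $d$ are the coloops, whose removal gives the zero sharbly in $\Sh(V)$ by \eqref{equ:sh span rel}.
\item If $P_S$ has smaller rank, these are exactly the summands of $D_2$.
\end{enumerate}
Summing gives $\pi\partial_\square F=d\pi F+D_1\pi F+D_2\pi F$, as required.

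I expect the main obstacle to be the bookkeeping rather than any conceptual point. Three things must line up: the signs, including the factor $(-1)^k$ on the nerve part and the sign twist on the coinvariants; the ambient modules of $R_jF$ and $C_jF$, which must be identified with the exact quotients and saturated subspaces appearing in $D_1$ and $D_2$; and the coloop terms, which must land in $D_2$ rather than in $d$. I would verify the signs first on the rank-one and rank-two example $[(2,0),(1,2)]$ of Remark \ref{rem:coloop cancellation}, where coloop terms in $D_1$ and $D_2$ do not cancel.
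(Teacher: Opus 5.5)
Your proposal is correct and follows essentially the same route as the paper: $\pi\partial_{\rm nerve}$ vanishes because $\shB$ is built from $\GL$-coinvariants (this is your $q=1$ case), which reduces everything to $q=0$. There $-\pi C_jF$ produces the $D_1$-terms, and $\pi R_jF$ produces either the $d$-terms or the $D_2$-terms according to whether $v_j$ is a coloop. Your extra checks are correct and harmless, though the paper does not spell them out: well-definedness of $\pi$ on the normalized, sign-twisted coinvariants, and the zero-vector case via $R_iF=C_iF$.
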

\begin{proof}
    Recall that $\partial_{\rm tot}=\partial_\square + (-1)^k\partial_{\rm nerve}$. 
    It is clear that $\pi \partial_{\rm nerve}=0$ since $\shB$ is defined using $\GL$-coinvariants.
    Hence we may assume that $q=0$ and it remains to show that 
    \[
    (d+D) \pi x = \pi\partial_\square x
    = \sum_{j=1}^k (-1)^{j-1}(\pi R_jx-\pi C_jx).
    \]
    If $v_j$ (as above) is not a coloop, then 
    \[
    (-1)^{j-1}\pi R_j x = (-1)^{j-1}[v_1,\dots,\hat v_j ,\dots, v_k],
    \]
    which equals the $j$th term in $d\pi x$, see \eqref{equ:sh diff}.
    Similarly, $-C_jF$ gives the $j$th (quotient) term
    in $D_1$, 
    \[
    (-1)^{j}\pi C_j x = (-1)^{j} [\bar v_1,\dots,\hat {\bar v}_j ,\dots, \bar v_k]_{V/(\Z v_j)_{\sat}},
    \]
    see \eqref{equ:D formula}. 
    If $v_j$ is a coloop, the $j$th term in $d\pi F$ vanishes by the span relation \eqref{equ:sh span rel}. In this case $-\pi C_jF$ still provides the $j$th term of $D_1$, while $R_jF$ provides the corresponding $D_2$-term
    \[
    (-1)^{j-1}\pi R_j x = (-1)^{j-1}
    [v_1,\dots,\hat v_j,\dots,v_k]_{\vspan(v_1,\dots,\hat v_j,\dots,v_k)_{\sat}}.\qedhere
    \]
\end{proof}

\begin{lemma}\label{lem:rho well}
    The map \eqref{equ:rho def} commutes with the differentials, 
    \[
    \partial_{\rm Wal} \rho = \rho \partial_{\rm tot}.
    \]
\end{lemma}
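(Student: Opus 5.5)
We need $\partial_{\rm Wal}\rho = \rho\partial_{\rm tot}$. Since $\partial_{\rm Wal} = \partial_S + (-1)^p\partial_{\rm nerve}$ and $\partial_{\rm tot}=\partial_\square+(-1)^k\partial_{\rm nerve}$, and $\rho$ keeps the cubical degree $k$ equal to the simplicial degree $p=k$, the plan is to treat the two pieces separately.

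\textbf{Nerve part.} The nerve parts commute because each $\operatorname{Flag}_\sigma$ is a functor $\mathcal E_k\to wS_k\mathcal P$, hence compatible with the nerve face maps, and the Koszul signs $(-1)^k=(-1)^p$ agree. Before that, I would check that $\rho$ is well defined on $X$:
\begin{itemize}
\item Relabelling the configuration by $\tau\in S_k$ replaces $\operatorname{Flag}_\sigma$ by $\operatorname{Flag}_{\tau\sigma}$, so $\rho$ is compatible with the sign-twisted coinvariants.
\item Cubically degenerate chains (those with a zero vector $v_j$) map to zero. For such chains $P_{I\cup\{j\}}=P_I$, so the flags for $\sigma$ and for $\sigma\circ(a\ a{+}1)$, where $\sigma(a+1)=j$, are either degenerate or equal with opposite signs.
\end{itemize}

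\textbf{Simplicial versus cubical part.} Write $\partial_S\operatorname{Flag}_\sigma=\sum_{a=0}^k(-1)^{a-1}\partial_a\operatorname{Flag}_\sigma$. Here $\partial_a$ deletes the index $a$ of the flag, i.e.\ the $a$-th step $P_{I^\sigma_a}$.

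\emph{Interior faces} ($0<a<k$). The face $\partial_a\operatorname{Flag}_\sigma$ merges the steps adding $\sigma(a)$ and $\sigma(a+1)$. It coincides with $\partial_a\operatorname{Flag}_{\sigma s_a}$, with $s_a=(a\ a{+}1)$, because both use the same subsets $I^\sigma_b$ for $b\neq a$ and the same repository objects. The signs are opposite, so these terms cancel in pairs, exactly as in the proof of Lemma~\ref{lem:AMP chi}.

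\emph{The face $a=0$.} This gives the flag $F_{I^\sigma_1,I^\sigma_b}$, $b\ge 1$. By the definition of $C_j$ with $j=\sigma(1)$ (ambient $F_{\{j\},[k]}$, repository pulled back along $Q_j$), this is literally $\operatorname{Flag}_{\sigma'}(C_jF)$, where $\sigma'$ is the induced permutation of $[k]\setminus\{j\}\cong[k-1]$.

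\emph{The face $a=k$.} This gives $F_{I^\sigma_a,I^\sigma_b}$ with $b\le k-1$. This equals $\operatorname{Flag}_{\sigma''}(R_jF)$ with $j=\sigma(k)$, using that $R_j$ restricts the repository along $U_j$ and the sets $P_I$ are the same.

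\textbf{Expected main obstacle.} Summing over $\sigma$ grouped by $j$ then gives $\sum_j \pm(\rho R_jF-\rho C_jF)$. The hardest part is the sign bookkeeping. One must check that $(-1)^\sigma$ times the simplicial sign equals $(-1)^{j-1}$ times $(-1)^{\sigma'}$ (resp.\ $(-1)^{\sigma''}$), with the extra minus on $C_j$ as required by $\partial_\square$. Moving $j$ to the front or to the back of $\sigma$ produces the signs $(-1)^{j-1}$ and $(-1)^{k-j}$. I would verify these on the case $\sigma=\mathrm{id}$ and extend by equivariance. If needed, I would absorb a global sign convention into the definition of $\partial_S$ (its $(-1)^{j-1}$ normalization) and confirm consistency with Lemma~\ref{lem:pi well}.
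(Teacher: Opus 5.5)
Your proposal is correct and follows the paper's proof essentially step by step. The interior faces cancel via the transposition $(a\ a{+}1)$, and the $0$-th and $k$-th faces are literally $\operatorname{Flag}_{\bar\sigma}(C_{\sigma(1)}x)$ and $\operatorname{Flag}_{\tilde\sigma}(R_{\sigma(k)}x)$. The inversion counts $(-1)^\sigma=(-1)^{j-1}(-1)^{\bar\sigma}$ and $(-1)^\sigma=(-1)^{k-i}(-1)^{\tilde\sigma}$, combined with the coefficients $-1$ and $(-1)^{k-1}$ of $\partial_0$ and $\partial_k$, give exactly $\rho(\partial_\square x)$, so no adjustment of sign conventions is needed. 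Your additional well-definedness check on $X$ (sign-twisted coinvariants, and vanishing on cubical degeneracies, where in fact every $\operatorname{Flag}_\sigma$ is literally simplicially degenerate) is a useful supplement that the paper leaves implicit.
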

\begin{proof}
It is clear that $\partial_{\rm nerve} \rho = \rho \partial_{\rm nerve}$, so it remains to check that 
\[
\partial_S \rho (x)  = \rho (\partial_\square x).
\]
Decompose $\partial_S = \sum_{j=0}^k (-1)^{j-1} \partial_j$ into the simplicial boundary maps $\partial_j$.
Concretely, for $1\leq j\leq k-1$ the operation $\partial_j$ forgets the $j$-th constituent of the flag, so that
\[
\partial_j \operatorname{Flag}_\sigma(x) = 
\partial_j \operatorname{Flag}_{\sigma\circ (j, j+1)}(x), 
\]
with $(j, j+1)\in S_k$ the pair transposition.
Hence the corresponding terms of $\partial_j \rho(x)$ in the alternating sum \eqref{equ:rho def} cancel. We obtain $\partial_S \rho (x)=-(\partial_0 + (-1)^k \partial_k) \rho(x)$.

The term $\partial_0\rho(x)$ is obtained by quotienting the ambient module by the first constituent of the flag. If $\sigma(1)=j$ then we have 
\[
\partial_0 \operatorname{Flag}_\sigma(x) 
=
\operatorname{Flag}_{\bar \sigma}(C_j x),
\]
where $\bar \sigma\in S_{k-1}$ is obtained from $\sigma$ by deleting $j$ and renumbering. We have $(-1)^\sigma = (-1)^{j-1} (-1)^{\bar \sigma}$.

The term $\partial_k\rho(x)$ is obtained by replacing the ambient module by the $k-1$st term of the flag. In particular,
if $\sigma(k)=i$ then 
\[
\partial_{k} \operatorname{Flag}_\sigma(x) 
=
\operatorname{Flag}_{\tilde \sigma}(R_i x),
\]
with $\tilde \sigma$ obtained by removing $i$ and renumbering,
so that $(-1)^\sigma = (-1)^{k-i} (-1)^{\tilde \sigma}$.
We therefore have
\begin{align*}
    \partial_S \rho(x) &= -(\partial_0 + (-1)^k \partial_k) \rho(x)
    \\&= - \sum_{\sigma\in S_k} (-1)^\sigma 
    (\operatorname{Flag}_{\bar \sigma}(C_jx) + (-1)^k \operatorname{Flag}_{\tilde \sigma}(R_ix))
    \\&= - \sum_{\sigma\in S_k} 
    (-1)^{j-1} (-1)^{\bar \sigma}
    \operatorname{Flag}_{\bar \sigma}(C_jx)
    +
    \sum_{\sigma\in S_k} 
    (-1)^{i-1} (-1)^{\tilde \sigma}
    \operatorname{Flag}_{\tilde \sigma}(R_ix)
    \\&=
    - \sum_{\sigma\in S_{k-1}} (-1)^{\sigma}
    \sum_{j=1}^k (-1)^{j-1} 
    \operatorname{Flag}_{\sigma}(C_jx)
    +
    \sum_{\sigma\in S_{k-1}} (-1)^{\sigma} \sum_{i=1}^k
    (-1)^{i-1} 
    \operatorname{Flag}_{\sigma}(R_ix)
    \\&=
    \rho\left( \sum_{j=1}^k (-1)^{j-1} (R_j-C_j)x \right)
    =\rho( \partial_\square x).
\end{align*}
In the second and third line $j=\sigma(1)$ (resp. $i=\sigma(k)$) depend on the permutation $\sigma$, though this is not indicated in the notation. 
\end{proof}

\begin{lemma}\label{lem:pi qiso}
    The map $\pi\colon(X,\partial_{\rm tot})\to (\shB,d+D)$ is a quasi-isomorphism.
\end{lemma}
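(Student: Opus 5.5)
We prove that $\pi$ is a quasi-isomorphism by filtering both sides by the cubical length $k$ (equivalently the sharbly degree). The aim is to reduce the statement to the fact that, for each fixed $k$, the nerve direction of $X$ computes group homology of a groupoid whose homology is concentrated in simplicial degree $0$.

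First consider the filtration by $k$ on $X$ and on $\shB$. Both $\partial_\square$ and $d+D$ lower $k$ by one, while $\partial_{\rm nerve}$ preserves $k$, and $\pi$ is compatible with these filtrations. On the associated graded, $\shB$ has zero differential. In degree $k$, $X$ becomes the complex $(N^\Delta_\bullet N_\bullet \mathcal E_k \otimes \Q \otimes \sgn_k)_{S_k}$, modulo degenerate cubes, with the nerve differential. Since the filtrations are bounded below and exhaustive (in each total degree only finitely many $k$ occur), a comparison of spectral sequences shows that it suffices to prove $\pi$ is a quasi-isomorphism on the associated graded pieces.

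The $E^0$-page is the rational homology of the classifying space of the groupoid $\mathcal E_k$ (normalized in the cubical direction), twisted by signs and taken in $S_k$-coinvariants. Each connected component of $\mathcal E_k$ is equivalent to the automorphism group of a single enhanced configuration. Because enhancements form a contractible groupoid, this automorphism group is the stabilizer in $\GL(V)$ of the tuple $\mathbf v\in\bar V^k$. Since $\mathbf v$ spans $V\otimes\Q$, an element $g$ fixing each $\bar v_i$ acts by $\pm1$ on each $v_i$, so $g$ is determined by finitely many signs. The stabilizer is therefore finite.

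Rational homology of finite groups vanishes in positive degree, so $H(\mathcal E_k;\Q)$ is concentrated in nerve degree $0$. There it is spanned by isomorphism classes of pairs $(V,\mathbf v)$, with coinvariant coefficients $\Q$ twisted by the action of automorphisms on the orientation. Taking $(-\otimes\sgn_k)_{S_k}$ commutes with homology in characteristic $0$, and normalization by degeneracies kills tuples containing a zero vector. What remains is exactly the span of $\GL$-orbits of $k$-tuples in $\bar V$ spanning $V\otimes \Q$, modulo permutation signs and with tuples containing zero set to $0$.

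These are precisely the relations \eqref{equ:sh perm rel}, \eqref{equ:sh null rel}, \eqref{equ:sh span rel}, \eqref{equ:sh sign rel} defining $\Sh(V)_{\GL(V)}$, with the sign relation built into the use of $\bar V$. Moreover, $\pi$ induces this identification on $E^1$. Since $\shB$ is also the direct sum over $V$ up to isomorphism, summing over the components gives the isomorphism on $E^1$.

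The main obstacle is bookkeeping rather than an idea. The coinvariants with respect to the stabilizer must produce exactly the sharbly relations, including the vanishing of orbits whose stabilizer acts by an odd sign. For example, the tuple must be sent to $0$ when an automorphism permutes the vectors oddly, and this matches antisymmetry in $\shB$. One must also check that the coinvariant-taking is compatible with the finite-group homology vanishing. I would verify this by writing $H_0$ of a component as $(\Q\otimes\sgn)_{\mathrm{Stab}}$, using the action of the stabilizer via its induced permutation of indices, and comparing it with the orbit relations in $\Sh(V)_{\GL(V)}$.
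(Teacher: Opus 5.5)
Your proposal is correct and follows essentially the same route as the paper. You filter by the length $k$ and reduce to the nerve direction for fixed $k$. After discarding configurations with zero vectors and forgetting the contractible enhancement data, all automorphism groups are finite, so the rational nerve homology is concentrated in degree $0$. Exactness of $(-\otimes \mathrm{sgn}_k)_{S_k}$ over $\Q$ then identifies that $H_0$ with $\shB_k$ via $\pi$.

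One small imprecision: the stabilizer of the ordered tuple acts trivially on the constant coefficients, so the sign twist enters only through the signed $S_k$-coinvariants. Since those coinvariants are exact, this does not affect the argument.
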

\begin{proof}
    $X$ is a first quadrant bicomplex, and we may understand $\shB$ as a first quadrant bicomplex concentrated in degree 0 in the second coordinate, and $\pi$ is then a map of bicomplexes.
    Taking the associated spectral sequences, it suffices to show that for each fixed $k$, the morphism 
    \[
    \pi \colon (X_{k\bullet} = \bigoplus_{q\geq 0}X_{kq}, \partial_{\rm nerve} ) \longrightarrow (\shB_k, 0)  
    \]
    is a quasi-isomorphism.
    
    To study the left-hand side, first consider the full subgroupoid $\mathcal E_k^\times \subset \mathcal E_k$ whose objects are enhanced ordered configurations such that the marked vectors are nonzero. Denoting the normalized chain complex of its nerve by $C_\bullet(\mathcal E_k^\times)$ we have the isomorphism of chain complexes
    \[
    X_{k\bullet} \cong (C_\bullet(\mathcal E_k^\times) \otimes \sgn_k)_{S_k}.
    \]
    Forgetting the enhancement and remembering only the ambient module and the tuple of marked vectors produces from $\mathcal E_k^\times$ a groupoid $\bar {\mathcal E}_k^\times$, and the map $\mathcal E_k^\times\to \bar {\mathcal E}_k^\times$ is an equivalence of groupoids, so that we have a quasi-isomorphism 
    \[
    (C_\bullet(\mathcal E_k^\times) \otimes \sgn_k)_{S_k}
    \longrightarrow 
    (C_\bullet(\bar{\mathcal E}_k^\times) \otimes \sgn_k)_{S_k}.
    \]
    The automorphism groups of $\bar {\mathcal E}_k^\times$ are finite, since an automorphism has to fix the spanning (over $\Q$) tuple of vectors up to signs, and there are finitely many choices of signs. Hence the positive degree homology of $\bar {\mathcal E}_k^\times$ vanishes.
    Since the functor $(-\otimes \sgn_k)_{S_k}$ is exact over $\Q$ this also holds for the (signed) $S_k$-coinvariants of the normalized rational chain complex of the nerve of $\bar {\mathcal E}_k^\times$. Furthermore, the zeroth homology is evidently identified with $\shB_k$.
    We therefore have
    \begin{equation}\label{equ:X hom}
    H_q(X_{k\bullet}) \cong 
    H_q( (C_\bullet(\bar{\mathcal E}_k^\times) \otimes \sgn_k)_{S_k})
    \cong 
    \begin{cases}
        \shB_k & \text{for $q=0$} \\
        0 & \text{for $q>0$}
    \end{cases}.
    \end{equation}
    Inspecting the chain of morphisms above, we see that the homology isomorphism \eqref{equ:X hom} is the one induced by $\pi$, so that the lemma follows.
\end{proof}

\begin{lemma}\label{lem:rho qiso}
    The map $\rho\colon X\to C_\bullet^{\rm Wal}$ is a quasi-isomorphism.
\end{lemma}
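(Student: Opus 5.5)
The plan is to compare both sides through the rank filtration and reduce, on associated graded pieces, to the quasi-isomorphism $\chi$ of Lemma \ref{lem:AMP chi}. On $C^{\mathrm{Wal}}$ we use the filtration \eqref{eq:waldhausen-rank-filtration} by $\rk A_{0,p}$. On $X$ we filter by the rank of the ambient module $V$ of $F_0$. Then $\rho$ preserves these filtrations, since $\operatorname{Flag}_\sigma(F)_{0,k}=F_{\emptyset,[k]}=V$. Both filtrations are exhaustive and bounded below. In each fixed total degree only finitely many ranks contribute: a nonzero normalized cubical chain of length $k$ has $\rk V\leq k$ because the marked vectors span, and similarly on the Waldhausen side a nondegenerate $p$-flag has rank $\geq p$ steps only if nonzero. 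Hence the associated spectral sequences converge, and it suffices to show that $\gr_n\rho$ is a quasi-isomorphism for every $n$.

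Next we identify the graded pieces. In $\gr_n C^{\mathrm{Wal}}$ the faces $\partial_0$ and $\partial_p$ strictly lower the rank on nondegenerate flags, since $A_{0,1}\neq 0$ and $A_{p-1,p}\neq0$ there. So only the inner faces survive. The nondegenerate $p$-simplices of $wS_p\mathcal P$ with top rank $n$ are, up to the contractible enhancement data, strict flags $0\subsetneq A_{0,1}\subsetneq\cdots\subsetneq A_{0,p-1}\subsetneq V$, i.e. $(p-2)$-simplices of $\T_\bullet(V\otimes\Q)$ together with the augmentation for $p=1$ (and $p=0$ for $n=0$). Since the nerve direction is the nerve of a groupoid equivalent to the action groupoid of $\GL(n,\Z)$, $\gr_n C^{\mathrm{Wal}}$ is quasi-isomorphic to the bar complex $C_\bullet(\GL(n,\Z);\hat N\T_\bullet(\Q^n)[-2+\delta_{n,0}])$.

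Similarly, in $\gr_n X$ the terms $C_j$ lower the rank unless $v_j=0$, and such chains are normalized away. The terms $R_j$ lower the rank exactly when $v_j$ is a coloop. So the surviving cubical differential is precisely the sharbly differential \eqref{equ:d simplified}. Using the equivalence $\mathcal E_k\to\bar{\mathcal E}_k$ (forgetting enhancements, as in the proof of Lemma \ref{lem:pi qiso}) and the exactness of $(-\otimes\sgn_k)_{S_k}$ over $\Q$, we get that $\gr_n X$ is quasi-isomorphic to $C_\bullet(\GL(n,\Z);\Sh(\Z^n))$. Here the signed $S_k$-coinvariants of the ordered tuples modulo $\pm$ produce exactly the antisymmetry and sign relations of $\Sh$.

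Under these identifications, $\gr_n\rho$ becomes $\id\otimes\chi$. Indeed, $\operatorname{Flag}_\sigma(F)$ with its first and last (trivial) constituents stripped is the chain $C_\sigma$ of \eqref{equ:chi def}, with the same sign $(-1)^\sigma$. Flags with repeated subspaces are degenerate and vanish, consistent with $\chi$ being defined into normalized chains. Since $\chi$ is a $\GL(n,\Z)$-equivariant quasi-isomorphism by Lemma \ref{lem:AMP chi}, the induced map on group hyperhomology chains $C_\bullet(\GL(n,\Z);-)$ is a quasi-isomorphism, by the spectral sequence of the bar filtration. This completes the argument.

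I expect the main obstacle to be the bookkeeping in the second step. One has to verify carefully that the groupoid/enhancement layers on both sides model the same homotopy coinvariants compatibly with $\rho$. This should be done by an explicit natural comparison functor to the action groupoids rather than abstract equivalences. One must also check the signs, including the Koszul sign $(-1)^k$ on the nerve and the $\sgn_k$ twist, so that $\gr_n\rho$ is literally $\id\otimes\chi$ and not merely equal to it up to sign in some degrees.
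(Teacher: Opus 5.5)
Your overall strategy is the paper's: filter by rank, then compare $\gr_n\rho$ with $\chi$ tensored with homotopy $\GL(n,\Z)$-coinvariants. Your analysis of $\gr_n X$ is correct. There, a configuration with a zero marked vector is literally in the image of a degeneracy, because the enhancement is indexed by the submodules $P_I$. You are also right that $\rho$ only produces flags with strictly increasing ranks. The gap is on the Waldhausen side. Normalization in the $S_\bullet$-direction only kills diagrams in the image of some $s_j$, i.e.\ those where $A_{0,j}\to A_{0,j+1}$ is an \emph{identity} and the chosen quotients $A_{j,b}$, $A_{j+1,b}$ coincide. Consider the $2$-simplex $0\to\Z\xrightarrow{-1}\Z$ with $A_{1,2}=0$. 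It is nondegenerate, has rank sequence $(0,1,1)$, and is not a strict flag. Moreover, its face $\partial_2$ does not lower the rank. So your claims that nondegenerate simplices satisfy $A_{0,1}\neq 0$ and $A_{p-1,p}\neq 0$, and are ``up to enhancement'' strict flags, are false. What you describe is not $\gr_n C^{\mathrm{Wal}}$ but only its subcomplex $W_n^{\rm str}$ of diagrams with $0=r_0<r_1<\cdots<r_p=n$.

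The missing step is to show that the inclusion $W_n^{\rm str}\hookrightarrow \gr_n C^{\mathrm{Wal}}$ is a quasi-isomorphism. The paper does this by first taking homology in the nerve direction. The nerve differential preserves the rank sequence $(r_i)$. Every diagram with some $r_i=r_{i+1}$ is isomorphic in $wS_p\mathcal P$ to a degenerate one, and $s_j$ is fully faithful onto an equivalent subgroupoid. Hence such rank sequences contribute nothing to the nerve homology of the normalized complex. With this added, your argument becomes the paper's commutative square: $\rho^{\rm str}\colon \gr_nX\to W_n^{\rm str}$, the forgetful quasi-isomorphisms $a\colon \gr_nX\to h\shB^{(n)}$ and $b\colon W_n^{\rm str}\to hS^{(n)}$, and $\chi_h$. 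Separately, your claim that only finitely many ranks contribute in each total degree is false on the Waldhausen side: already in total degree $1$, every nonzero module of every rank is a nondegenerate $1$-simplex. It is not needed, though. For an exhaustive, bounded below increasing filtration, a quasi-isomorphism on every $\gr_n$ gives one on each $F_n$ by induction and the five lemma, hence on the colimit.
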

\begin{proof}
Both $X$ and $C_\bullet^{\rm Wal}$ are equipped with a bounded below exhaustive filtration by the rank of the ambient module. The morphism $\rho$ respects the filtrations. Hence for showing that $\rho$ is a quasi-isomorphism it suffices to check that the associated graded morphism 
\[
\gr_n \rho \colon \gr_n X\longrightarrow \gr_n C_\bullet^{\rm Wal}
\]
is a quasi-isomorphism for every $n$.
This is the statement we will show. For $n=0$ both sides are $\Q$, so we may assume that $n>0$.

For $\mathcal G$ a groupoid, let us denote by $C_\bullet(\mathcal G)$ the normalized rational chain complex of its nerve, computing the rational groupoid homology. We denote its differential by $\partial_{\rm nerve}$.
Let $\mathcal D_{n,k}$ be the groupoid whose objects are pairs $(V,\mathbf v)$ consisting of a free $\Z$-module of rank $n$ and a spanning tuple of non-zero marked vectors $\mathbf v=(v_1,\dots,v_k)\in \overline V^k$.
The morphisms are isomorphisms of the ambient modules that map the marked vectors to each other. We define the complex 
\[
h\shB^{(n)} := \operatorname{Tot}_{k,q}(C_q(\mathcal D_{n,k})\otimes \sgn_k)_{S_k}
\]
with differential $\partial_{\Sh} + (-1)^k \partial_{\rm nerve}$, with $\partial_{\Sh}$ defined as (see also \eqref{equ:sh diff})
\begin{equation*}
\partial_{\Sh}
 ([v_1,\dots, v_{k}],f_1,\dots,f_q) = 
\sum_{\substack{ j=1 \\ v_j\neq 0 \\ v_j \text{ no coloop}}}^k (-1)^{j-1} ([v_1,\dots,\hat v_j,\dots, v_{k}],f_1,\dots,f_q).
\end{equation*}
There is an obvious forgetful map $a\colon\gr_nX \to h\shB^{(n)}$. It is a quasi-isomorphism, since in each fixed length $k$ (of the tuple of marked vectors) it is induced by an equivalence of groupoids, and taking $S_k$-coinvariants commutes with taking homology over $\Q$.

On the other hand, let $\mF_{n,p}$ be the groupoid of flags of saturated submodules 
\[
0=L_0 \subsetneq L_1 \subsetneq \cdots \subsetneq L_p =L
\]
with the ambient module $L$ of rank $n$. The morphisms are isomorphisms of the ambient module that preserve the flags.
Define 
\[
hS^{(n)}:= \operatorname{Tot}_{p,q} C_q(\mF_{n,p})
\]
with differential $\partial_S + (-1)^{p} \partial_{\rm nerve}$, 
\begin{multline*}
\partial_S (L_0 \subsetneq L_1 \subsetneq \cdots \subsetneq L_p,f_1,\dots,f_q)
=\\
\sum_{j=1}^{p-1}
(-1)^{j-1} (L_0 \subsetneq L_1 \subsetneq \cdots\subsetneq \hat L_j  \subsetneq\cdots \subsetneq L_p,f_1,\dots,f_q).
\end{multline*}
The complexes $h\shB^{(n)}$ and $hS^{(n)}$ model, respectively,
the homotopy $\GL(n,\Z)$-coinvariants of $\Sh(\Z^n)$ and 
$\hat N \T_\bullet(\Q^n)[-2]$ of Section \ref{sec:red aug}. Extending the map $\chi$ by the identity in the nerve direction we hence obtain a morphism $\chi_h\colon h\shB^{(n)}\to hS^{(n)}$
which is a quasi-isomorphism by Lemma \ref{lem:AMP chi}.

Let $W_n^{\rm str}\subset \gr_n C_\bullet^{\rm Wal}$ be the subcomplex spanned by nerve simplices of diagrams satisfying
\[
r_0:=0< r_1:=\rk A_{0,1} < \cdots < r_p:=\rk A_{0,p} =n.
\]
The inclusion $j\colon W_n^{\rm str}\to \gr_n C_\bullet^{\rm Wal}$ is a quasi-isomorphism by the following argument. We first take the nerve homology. The differential $\partial_{\rm nerve}$ leaves invariant the increasing sequence $(r_i)$ of ranks. Any simplex with some $r_i=r_{i+1}$ is isomorphic to a degenerate one. Hence on the $\partial_{\rm nerve}$-homology the parts corresponding to such rank sequences become zero, since we are taking normalized chains.

Furthermore, we have the forgetful morphism $b\colon W_n^{\rm str}\to hS^{(n)}$ remembering only the first line of the diagram. 
For each $p$ separately the forgetful functor is an equivalence of groupoids and therefore $b$ is a quasi-isomorphism.
The map $\gr_n \rho$ factors through a map $\rho^{\rm str}\colon \gr_nX \to W_n^{\rm str}$ since the image of $\gr_n \rho$ consists of flags with strictly increasing ranks. We hence obtain a commutative diagram 
\[
\begin{tikzcd}
    \gr_n X \ar{r}{\rho^{\rm str}} \ar{d}{a}[swap]{\sim}  \ar[bend left]{rr}{\gr_n\rho}
    & W_n^{\rm str} \ar{r}{j}[swap]{\sim} \ar{d}{b}[swap]{\sim}  & \gr_n C_\bullet^{\rm Wal} \\
    h\shB^{(n)}\ar{r}{\chi_h}[swap]{\sim} & hS^{(n)}
\end{tikzcd}.
\]
It readily follows that $\gr_n\rho$ is a quasi-isomorphism.
\end{proof}

\begin{proof}[Proof of Proposition \ref{prop:cubical-waldhausen-comparison}]
By Lemmas \ref{lem:pi well} and \ref{lem:rho well} the maps of \eqref{eq:cubical-waldhausen-zigzag} are morphisms of complexes, and by Lemmas \ref{lem:pi qiso} and \ref{lem:rho qiso} they are quasi-isomorphisms. 
The second statement of Proposition \ref{prop:cubical-waldhausen-comparison} follows immediately since $C^{\rm Wal}_\bullet$ computes $H_\bullet(BK(\Z);\Q)$.
\end{proof}

\subsection{End of proof of Theorem \ref{thm:main 2}}
It remains to check that $\Phi$ of \eqref{equ:Phi deformed} induces an isomorphism on homology.
Let us denote the homology map by 
\begin{equation}\label{equ:H Phi}
[\Phi]\colon H(U^c\G_0^+,d+\nabla) \longrightarrow H(\shB,d+D)
\end{equation}
and recall what we know so far.
First, $[\Phi]$ is a map of graded Hopf algebras.
We also already know that both sides are isomorphic to $H:=\Q[t,W_3,W_5,\dots]$. 
More precisely, we know that the natural map $H\to H(U^c\G_0^+,d+\nabla)$ is a Hopf algebra isomorphism, see Corollary \ref{cor:KWZ}. On the other hand, we know from Proposition \ref{prop:deformed homology} that $H(\shB,d+D)\cong H(BK(\Z);\Q)$, and by a classical computation of Borel \cite{Borel1974} the latter is identified with $H$. We emphasize that we will just need that there is some isomorphism of graded vector spaces 
$H\cong H(\shB,d+D)$, but we do not require any further properties or knowledge of the isomorphism.
Also, $H$ and hence both sides of \eqref{equ:H Phi} are non-negatively graded, connected and finite dimensional in each homological degree.

Now, for $[\Phi]$ to be an isomorphism it is sufficient to show that $[\Phi]$ is injective, since both source and target have the same finite dimension degree-wise.
We will show injectivity by induction on the degree.
Trivially, $[\Phi]$ is injective in homological degree zero.
Suppose $[\Phi]$ is injective in degrees $< n$ and suppose there was some $x\in H(U^c\G_0^+,d+\nabla)$ of degree $n$ so that $[\Phi](x)=0$.
Let $\bar\Delta$ denote the reduced coproduct.
Then by compatibility with the Hopf structure 
\[
([\Phi]\otimes [\Phi])(\bar \Delta x)=
\bar \Delta([\Phi](x))=0.
\]
Hence, since $[\Phi]$ is injective in degrees $<n$ and only such lower degree pieces are used on the left-hand side of the equation above, we see that $\bar \Delta x=0$. Hence $x$ is primitive. 
But by Corollary \ref{cor:Wn def injective} the map $[\Phi]$ is injective on the subspace $\vspan(t,W_3,W_5,\dots)\subset H$ of primitive elements.
Hence $x=0$.
This shows that $[\Phi]$ is injective, hence an isomorphism.
\hfill\qed

\section{Remark on the odd graph complex}
Many of the above constructions have analogues for the homology $H_\bullet(\GL(n,\Z),\St_n\otimes \Q\otimes \det)$ with values in the Steinberg module times the sign representation.
For example, we may define 
\begin{align*}
\shA^{\det}_d &= \bigoplus_n H_{d}(\GL(n,\Z),\St_n\otimes \Q\otimes \det)
&
\shB^{\det}_d &= \bigoplus_n \left( \Sh(\Z^n)_{d+n} \otimes \det \right)_{\GL(n,\Z)}.
\end{align*}
Then $H(\shB^{\det})=\shA^{\det}$. Furthermore, $\shB^{\det}$ is a dg Hopf algebra as shown in \cite{AMP}, and hence $\shA^{\det}$ is a graded Hopf algebra.

Similarly, on the graph complex side we may define the ``odd'' graph complex $\GC_1$ whose elements are $\Q$-linear series of pairs $(\Gamma, o)$ with the orientation datum $o$ a choice of one of the two possible generators of the $\Z$-module 
\[
\det( E(\Gamma)) \otimes \det(H^1(\Gamma;\Z)),
\]
with the relations, differential and Lie bracket naturally extended, and the degree being the number of vertices minus 1.
The pre-dual dg Lie coalgebra we denote by $\G_1$.
Then by a straightforward extension of the definitions above we obtain a map of dg Hopf algebras 
\[
\Phi^{\det} \colon U^c\G_1 \longrightarrow \shB^{\det}.
\]
Unfortunately, this latter morphism  seems to be fairly trivial on homology as far as the homology tables are known.
Hence we refrain from discussing this ``odd'' case further and leave its study to future work.

\bibliographystyle{alpha}
\bibliography{refs}

\begin{thebibliography}{DSEVKM26}

\bibitem[AGM12]{AGMC}
Avner Ash, Paul~E. Gunnells, and Mark McConnell.
\newblock Resolutions of the {Steinberg} module for {$\mathrm{GL}(n)$}.
\newblock {\em Journal of Algebra}, 349(1):380--390, 2012.

\bibitem[AMP]{AMP}
Avner Ash, Jeremy Miller, and Peter Patzt.
\newblock {Hopf algebras, Steinberg modules, and the unstable cohomology of
  $SL_n(\mathbb Z)$ and $GL_n(\mathbb Z)$}.
\newblock Preprint.
\newblock \href{https://arxiv.org/abs/2404.13776}{arXiv:2404.13776}.

\bibitem[BCGP]{BCGP}
Francis Brown, Melody Chan, S{\o}ren Galatius, and Sam Payne.
\newblock {Hopf algebras in the cohomology of $\mathcal{A}_g$,
  $\mathrm{GL}_n(\mathbb{Z})$, and $\mathrm{SL}_n(\mathbb{Z})$}.
\newblock Preprint.
\newblock \href{https://arxiv.org/abs/2405.11528}{arXiv:2405.11528}.

\bibitem[Bor74]{Borel1974}
Armand Borel.
\newblock Stable real cohomology of arithmetic groups.
\newblock {\em Ann. Sci. \'Ecole Norm. Sup. (4)}, 7(2):235--272, 1974.

\bibitem[BPP]{BrownPanzerPortner}
Francis Brown, Erik Panzer, and Jean-Luc Portner.
\newblock {The motivic Lie algebra embeds into the cohomology of the general
  linear group}.
\newblock Preprint.
\newblock \href{https://arxiv.org/abs/2607.29671}{arXiv:2607.29671}.

\bibitem[Bro12]{Brown}
Francis Brown.
\newblock Mixed {Tate} motives over {$\mathbb{Z}$}.
\newblock {\em Annals of Mathematics}, 175(2):949--976, 2012.

\bibitem[BS73]{BorelSerre}
A.~Borel and J.-P. Serre.
\newblock Corners and arithmetic groups.
\newblock {\em Commentarii Mathematici Helvetici}, 48(1):436--491, 1973.

\bibitem[BS25]{BrownSchnetz}
Francis Brown and Oliver Schnetz.
\newblock The wheel classes in the locally finite homology of
  {$\mathrm{GL}_{n}(\mathbb{Z})$}, canonical integrals and zeta values.
\newblock {\em Geometry \& Topology}, 29(8):4389--4447, 2025.

\bibitem[Dri90]{Drinfeld}
V.~G. Drinfeld.
\newblock On quasitriangular quasi-{Hopf} algebras and on a group that is
  closely connected with {$\mathrm{Gal}(\overline{\mathbb Q}/\mathbb Q)$}.
\newblock {\em Algebra i Analiz}, 2(4):149--181, 1990.
\newblock English translation in Leningrad Math. J. 2 (1991), no. 4, 829--860.

\bibitem[DSEVKM26]{DEKM}
Mathieu Dutour~Sikiri{\'c}, Philippe Elbaz-Vincent, Alexander Kupers, and
  Jacques Martinet.
\newblock Voronoi complexes in higher dimensions, cohomology of
  {{\(GL_N(\mathbb{Z})\)}} for {{\(N\geqslant 8\)}} and the triviality of
  {{\(K_8(\mathbb{Z})\)}}.
\newblock {\em J. Inst. Math. Jussieu}, 25(1):565--590, 2026.

\bibitem[Gon05]{Goncharov}
A.~B. Goncharov.
\newblock {Polylogarithms, regulators, and Arakelov motivic complexes}.
\newblock {\em Journal of the American Mathematical Society}, 18(1):1--60,
  2005.

\bibitem[Gon26]{GoncharovPersonal}
A.~B. Goncharov.
\newblock Personal communication, 2026.

\bibitem[KW{\v Z}17]{KWZ}
Anton Khoroshkin, Thomas Willwacher, and Marko {\v Z}ivkovi{\'c}.
\newblock {Differentials on graph complexes}.
\newblock {\em Advances in Mathematics}, 307:1184--1214, 2017.

\bibitem[LS76]{LeeSzczarba}
Ronnie Lee and R.~H. Szczarba.
\newblock On the homology and cohomology of congruence subgroups.
\newblock {\em Inventiones mathematicae}, 33(1):15--53, 1976.

\bibitem[OG08]{OG}
J.-M. Oudom and D.~Guin.
\newblock {On the Lie enveloping algebra of a pre-Lie algebra}.
\newblock {\em Journal of K-Theory}, 2(1):147--167, 2008.

\bibitem[PP26]{PalPayne}
Urshita Pal and Sam Payne.
\newblock The {Q}uillen and sharbly {H}opf algebra structures on {S}teinberg
  homology coincide.
\newblock Preprint, 2026.
\newblock \href{https://arxiv.org/abs/2609.38735}{arXiv:2609.38735}.

\bibitem[Qui73]{QuillenSpSq}
Daniel Quillen.
\newblock Finite generation of the groups {{\(K_i\)}} of rings of algebraic
  integers.
\newblock Algebr. {{\(K\)}}-{Theory} {I}, {Proc}. {Conf}. {Battelle} {Inst}.
  1972, {Lect}. {Notes} {Math}. 341, 179-198 (1973)., 1973.

\bibitem[RW]{RossiWillwacher}
Carlo~A. Rossi and Thomas Willwacher.
\newblock {P. Etingof's conjecture about Drinfeld associators}.
\newblock Preprint.
\newblock \href{https://arxiv.org/abs/1404.2047}{arXiv:1404.2047}.

\bibitem[Sol69]{Solomon}
Louis Solomon.
\newblock The {Steinberg} character of a finite group with {BN}-pair.
\newblock In {\em Theory of Finite Groups (Symposium, Harvard Univ., Cambridge,
  Mass., 1968)}, pages 213--221. Benjamin, New York, 1969.

\bibitem[Wal85]{Waldhausen}
Friedhelm Waldhausen.
\newblock Algebraic {$K$}-theory of spaces.
\newblock In Andrew Ranicki, Norman Levitt, and Frank Quinn, editors, {\em
  Algebraic and Geometric Topology}, volume 1126 of {\em Lecture Notes in
  Mathematics}, pages 318--419, Berlin, Heidelberg, 1985. Springer Berlin
  Heidelberg.

\bibitem[War23]{Ward21}
Benjamin~C. Ward.
\newblock Wheel graph homology classes via {Lie} graph homology.
\newblock {\em Journal of Noncommutative Geometry}, 17(2):693--717, 2023.

\bibitem[Wila]{WillwacherFreeness}
Thomas Willwacher.
\newblock {On free generators in the Grothendieck-Teichm\"uller Lie Algebra}.
\newblock Preprint.
\newblock \href{https://arxiv.org/abs/2607.18793}{arXiv:2607.18793}.

\bibitem[Wilb]{WillwacherTriconnected}
Thomas Willwacher.
\newblock {The triconnected Kontsevich graph complex}.
\newblock Preprint.
\newblock \href{https://arxiv.org/abs/2503.17131}{arXiv:2503.17131}.

\bibitem[Wil15]{WillwacherGrt}
Thomas Willwacher.
\newblock {M. Kontsevich's graph complex and the Grothendieck--Teichm{\"u}ller
  Lie algebra}.
\newblock {\em Inventiones mathematicae}, 200(3):671--760, 2015.

\end{thebibliography}

\end{document}